\definecolor{maroon}{RGB}{144,0,32}
\newcolumntype{H}{>{\setbox0=\hbox\bgroup}c<{\egroup}@{}}
\chardef\bslash=`\\ 
\newtheorem{thm}{Theorem}[section]
\newtheorem{lem}[thm]{Lemma}
\newtheorem{rem}[thm]{Remark}
\theoremstyle{definition}
\theoremstyle{remark}
\numberwithin{equation}{section}
\newcommand{\bN}{\mathbb N}
\newcommand{\bX}{{\bf X}}
\newcommand{\cA}{\mathcal A}
\newcommand{\cE}{\mathcal E}
\newcommand{\cT}{\mathcal{T}}
\newcommand{\err}{{\rm err}}
\newcommand{\hcA}{\widehat{\mathcal A}}
\newcommand{\hto}{H^2_0(\Omega)}
\newcommand{\vket}{von K\'{a}rm\'{a}n equations}
\newcommand{\integ}{\int_\Omega}
\newcommand{\fl}{\quad \text{for all}\:}
\newcommand{\half}{\frac{1}{2}}
\newcommand{\dx}{{\rm\,dx}}
\newcommand{\ds}{{\rm\,ds}}
\newcommand{\cof}{{\rm cof}}
\newcommand{\dg}{{\rm dG}}
\newcommand{\Poincare}{Poincar\'e}
\newcommand{\Holder}{H\"{o}lder~}
\theoremstyle{definition}
\numberwithin{equation}{section}
\newcommand{\bV}{\text{\bf V}}
\newcommand{\bz}{\boldsymbol{z}}
\newcommand{\bv}{\boldsymbol{v}}
\newcommand{\bW}{\boldsymbol{W}}
\newcommand{\bH}{\boldsymbol{H}}
\newcommand{\bL}{\boldsymbol{L}}
\newcommand{\T}{\mathcal{T}}
\renewcommand{\P}{\mathcal{P}}
\newcommand{\hB}{\widehat{B}}
\def \R{{{\Bbb R}}}
\def \P{{{\mathcal P}}}
\def\R{\mathbb{R}}
\def\cA{\mathcal{A}}
\def\O{\Omega}
\def\bv{{\mathbf V}}
\def\pw{\rm {pw}}
\def\cE{{\mathcal{E}}}
\def\jump#1{\left[\hskip -3.5pt\left[#1\right]\hskip -3.5pt\right]}
\def\bxi{\boldsymbol{\xi}}
\newcommand{\be}{\begin{equation}}
\newcommand{\ee}{\end{equation}}
\definecolor{violet}{rgb}{0.580,0.,0.827}
\newcounter{corr}
\definecolor{violet}{rgb}{0.580,0.,0.827}
\newcommand{\corr}[3]{\typeout{Warning : a correction remains in page
		\thepage}
	\stepcounter{corr}        
	{\color{red}\ifmmode\text{\,{\ensuremath{#1}}\,}\else{#1}\fi}
	{\color{blue}#2}
	{\color{violet} #3}}
\newcounter{changeto}
\newcommand{\changeto}[2]{\typeout{Warning : a correction remains in page
		\thepage}
	\stepcounter{changeto}        
	{\color{blue}\ifmmode\text{\,\sout{\ensuremath{#1}}\,}\else\sout{#1}\fi}
	{\color{red}#2}}
\title{Morley Type Virtual Element Method for Von K\'{a}rm\'{a}n Equations}
\author{Devika Shylaja, Sarvesh Kumar \footnote{Department of Mathematics, Indian Institute of Space Science and Technology, Thiruvanathapuram 695547, India. devikas.pdf@iist.ac.in, sarvesh@iist.ac.in}}
\begin{document}
	\maketitle
\abstract This paper analyses the nonconforming Morley type virtual element method to approximate a regular solution to the \vket\, that describes bending of very thin elastic plates. Local existence and uniqueness of a discrete solution to the non-linear problem is discussed. A priori error estimate in the energy norm is established under minimal regularity assumptions on the exact solution. Error estimates in piecewise $H^1$ and $L^2$ norm are also derived. A working procedure to find an approximation for the discrete
solution using Newtons method is discussed. Numerical results that justify theoretical estimates are presented.

\section{Introduction}\label{sec:intro}
\noindent 
The \vket\, \cite{CiarletPlates,Knightly,BergerFife} model the bending of very thin elastic plates through a system of fourth-order semi-linear elliptic equations defined by: for a given load $f \in L^2(\Omega)$, seek the vertical displacement $u$ and the Airy stress function $v$ such that
\begin{subequations}\label{vke}
\begin{align}
&	\Delta^2 u =[u,v]+ f \text{ and }\Delta^2 v =-\half[u,u] 
	\text{ in } \Omega, \label{vke.domain}\\
&	u=\frac{\partial u}{\partial n} = v = \frac{\partial v}{\partial n} = 0 \text{  on  } \partial\Omega, \label{vke.bdy}
\end{align}
\end{subequations}\noeqref{vke.domain,vke.bdy}
\noindent with the von K\'{a}rm\'{a}n bracket $\displaystyle
[\eta,\chi]:=\eta_{xx}\chi_{yy}+\eta_{yy}\chi_{xx}-2\eta_{xy}\chi_{xy}$ and $n$ is the unit outward normal to the boundary $\partial \Omega$ of the polygonal domain $\Omega \subseteq \R^2$.

\medskip

\noindent The major challenges of the problem  in its numerical approximation are the non-linearity and the higher order nature of the equations. The results regarding the existence of solutions, regularity and bifurcation phenomena of the \vket\,  in \eqref{vke} are presented in \cite{CiarletPlates, Knightly, Fife, Berger,BergerFife, BlumRannacher} and the references therein. It is well-known \cite{BlumRannacher} that the solutions of the \vket\, belong to $\hto\cap H^{2+\alpha}(\Omega)$, where $\alpha\in (\half,1]$, referred to as the index of elliptic regularity, is determined by the interior angles of $\Omega$. Note that when $\Omega$ is convex, $\alpha=1$.

\medskip

\noindent The numerical methods to approximate the regular solutions of \vket\, has been studied using conforming {finite element methods (FEMs)} in \cite{Brezzi,ng1},  nonconforming Morley FEM in \cite{ng2,carstensen2017nonconforming}, mixed FEMs  in \cite{Miyoshi,Chen2020AMF,Reinhart}, discontinuous Galerkin methods, $C^0$ interior penalty methods in \cite{brennernew,CCGMNN18}, and hybrid FEMs in \cite{Quarteroni}. More recently, a conforming virtual element method is analysed in \cite{Carlo_VEMvKE}. 

\medskip

\noindent The Virtual Element Method (VEM) \cite{Veiga_basicVEM}, which is a generalization of the FEM,  has got more and more attention in recent years, because it can deal with the polygonal meshes and avoid an explicit construction of the discrete shape function, \cite{Veiga_HdivHcurlVEM,Veiga_hitchhikersVEM,Brenner_errorVEM}. The polytopal meshes can be very useful for a wide range of reasons, including meshing of the domain (such as cracks) and data features, automatic use of hanging nodes, adaptivity.  A conforming VEM for plate bending problems is introduced in \cite{Brezzi_VEM_platebending}. A $C^1$ virtual element for the Cahn-Hilliard equations and the vibration problem of Kirchhoff plates is developed in \cite{Antonietti_CahnHilliard}, \cite{}. This has been extended to the \vket\, to approximate the regular solutions in \cite{Carlo_VEMvKE}. In \cite{Zhao_ncfem}, a $C^0$ noncoforming VEM for plate bending problems is constructed for any order of accuracy. This nonconforming method is modified to fully nonconforming Morley type VEM in \cite{Zhao_Morley,Antonietti_ncVEM}. Note that both these papers deal with the same degrees of freedom whereas use different definition on the local virtual space. Recently, the Morley type VEM is analysed for the Navier-Stokes equations in stream function vorticity formulation in \cite{Adak_MorleyVEMNSE}.

\medskip

\noindent The aim of this paper is to extend and analyze the nonconforming Morley type VEM presented in \cite{Zhao_Morley} to approximate a regular solution to  the \vket. Since the discrete space is not a subspace of $H^2_0(\O)$, the convergence analysis offers a lot of challenges and novelty for this semilinear problem with trilinear nonlinearity. The trilinear form in \cite{Adak_MorleyVEMNSE} for the Navier-Stokes equation in  stream-function form vanishes whenever the second and third variables are equal, and satisfies the anti-symmetric property with respect to the second and third variables, and
this aids the wellposedness of the discrete formulation and error analysis. However, the trilinear form for \vket\, does not satisfy the properties stated above and hence leads to interesting challenges in the analysis. A discrete version of Sobolev embedding is employed for establishing the well-posedness of the discrete linearized problem or equivalently a discrete inf-sup condition. This discrete inf-sup condition allows the proof of local existence and uniqueness of a discrete solution to the non-linear problem with a Banach fixed point theorem. Optimal order error estimate in $H^2$ and $H^1$ norms are established using minimal regularity assumption of the exact solution. The discrete non-linear problem can be solved using the Newton’s method by choosing an appropriate initial guess such that there exists a closed sphere in which the approximate solution is unique and the Newton’s iterates converge quadratically to the discrete solution.

\medskip

\noindent The remaining parts are organised as follows. Section~\ref{sec:vke} discusses the weak formulation of the \vket\, and the linearised problem. Section~\ref{sec:MorleyVEM} deals with the Morley type VEM for the \vket. Some auxiliary results required for the convergence analysis and the wellposedness for the discrete linearised problem are established in Section~\ref{sec:wellposed} and is followed by the existence and local uniqueness of the discrete solution using fixed point of a non-linear operator. A priori error control in $H^2$ and $H^1$ norms, and convergence of the Newtons method are derived in Section~\ref{sec:error}. Section~\ref{sec.numericalresults} provides the results of computational experiments that validate the theoretical estimates.

\medskip

\noindent Throughout the paper, standard notations on Lebesgue and Sobolev spaces and their norms are employed. The standard semi-norm and norm on $H^{s}(\Omega)$ (resp. $W^{s,p} (\Omega)$) for $s>0$ and $1 \le p \le \infty$ are denoted by $|\cdot|_{s}$ and $\|\cdot\|_{s}$ (resp. $|\cdot|_{s,p}$ and $\|\cdot\|_{s,p}$ ). The norm in $H^{-s}(\Omega)$ is denoted by $\|\cdot\|_{-s}$. The standard $L^2$ inner product  and norm are denoted by $(\cdot, \cdot)$ and $\|\cdot\|.$ The notation ${\bH}^s(\Omega)$ (resp. ${\boldsymbol{L}}^p(\Omega)$) is used to denote the product space $H^{s}(\Omega) \times H^s(\Omega)$ (resp. $L^p(\Omega) \times L^p(\Omega)$). For all $\Phi = (\varphi_1,\varphi_2) \in {\bH}^s(\Omega) \; ( \text{ resp. } {\boldsymbol L}^2(\Omega))$, the product space is equipped with the norm
$\|{\Phi}\|_{s}:=(\| \varphi_1\|_s^2 +\|\varphi_2\|_s^2)^{1/2} \; 
(\text{ resp. } \|{\Phi}\|:=(\| \varphi_1\|^2 +\|\varphi_2\|^2)^{1/2}). \; 
$ The notation $a\lesssim b$ (resp. $a \gtrsim  b$) means there exists a generic mesh independent constant $C$ such that $a\leq Cb$ (resp. $a\ge Cb$).  

\section{Weak formulation}\label{sec:vke}
\noindent This section deals with the continuous weak formulation and its linearisation of the \vket.

\medskip

\noindent For all  $\eta,\chi, \varphi  \in V:=H^2_0(\O)$,  the weak formulation associated with \eqref{vke} seeks  $u,v\in \: V $ such that, for all $(\varphi_{1},\varphi_{2}) \in \bv =: V \times V$,
\begin{subequations}
\begin{align}
&	a(u,\varphi_1)+ b(u,v,\varphi_1) + b(v,u,\varphi_1) = f(\varphi_1)  \label{vk_weak1}\\
&	a(v,\varphi_2) -b(u,u,\varphi_2)   = 0,  \label{vk_weak2}        
\end{align}
\end{subequations}
with, for all  $\eta,\chi, \varphi  \in V$,
\begin{align}
& a(\eta,\chi):=\int_\Omega D^2\eta:D^2\chi \dx,\quad f(\varphi):=\int_\Omega f\varphi \dx  \mbox{ and } \nonumber \\
& b(\eta,\chi,\varphi):=-\half\integ [\eta,\chi]\varphi\dx =\half\int_\Omega \cof (D^2\eta) \nabla \chi\cdot \nabla \varphi \dx, \label{defn.b}
\end{align}
 where $D^2$ is the Hessian matrix, $:$ denotes the scalar product between the matrices and $\cof(D^2\eta)$ denotes the co-factor matrix of $D^2\eta$. It is known that \eqref{vk_weak1}-\eqref{vk_weak2} possesses at least one solution \cite{Knightly,Brezzi,CiarletPlates}.
 
 \medskip
 
 \noindent The combined vector form for  \eqref{vk_weak1}-\eqref{vk_weak2} seeks $\Psi=(u,v)\in \bv$ such that
\begin{equation}\label{VKE_weak}
{A}(\Psi,\Phi)+B(\Psi,\Psi,\Phi)- {F}(\Phi)=0\fl \Phi\in \bv,
\end{equation}
where, for all $\Xi=(\xi_1,\xi_2),\Theta=(\theta_1,\theta_2)$, and $\Phi=(\varphi_1,\varphi_2)\in  \bv$,
\begin{align*}
&	{A}(\Theta,\Phi):={} a(\theta_1,\varphi_1) + a(\theta_2,\varphi_2), \;
	\\
&B(\Xi,\Theta,\Phi):={} b(\xi_1,\theta_2,\varphi_1)+b(\xi_2,\theta_1,\varphi_1)-b(\xi_1,\theta_1,\varphi_2),\,\text{ and }\;\\
& {F}(\Phi) := (f(\varphi_1),0).
\end{align*}
The trilinear form $b(\bullet,\bullet,\bullet)$ is symmetric in all the three variables and so is $B(\bullet,\bullet,\bullet)$. The  boundedness and ellipticity properties stated below hold \cite{CiarletPlates,ng2}:
\begin{equation}
	{A}(\Theta,\Phi)\leq \|\Theta\|_2 \: \|\Phi\|_2,\: {A}(\Theta,\Theta) \geq \|\Theta\|_2^2, \;\text{ and }\; B(\Xi, \Theta, \Phi) \lesssim \|\Xi\|_2 \: \|\Theta\|_2 \: \|\Phi\|_2. \label{eqn:a_vk1}
\end{equation}
	\begin{lem}[\it{a priori bounds}]{\cite{BlumRannacher,ngr}}\label{lem:aprioriboundcts}\noindent
	\noindent For $f\in H^{-1}(\Omega)$, the solution $\Psi$ of \eqref{VKE_weak} belongs to $ \bv \cap {\bH}^{2 + \alpha} (\Omega)$,  $\alpha \in (1/2,1]$, and  satisfies the $a~priori$ bounds
$	\|\Psi\|_2 \lesssim \|f\|_{-1}$ and $\|\Psi\|_{2+\alpha}   \lesssim  \|f\|^3_{-1}+
	\|f\|_{-1}. $
\end{lem}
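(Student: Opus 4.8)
The plan is to separate the claim into (i) the energy bound $\|\Psi\|_2\lesssim\|f\|_{-1}$, proved by a single test of the weak form, and (ii) the regularity statement $\Psi\in\bH^{2+\alpha}(\Omega)$ together with the cubic bound, obtained by a bootstrap through biharmonic elliptic regularity. The whole argument leans on the full symmetry of $b$ (hence of $B$) noted after \eqref{eqn:a_vk1}.

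For (i) I would test \eqref{VKE_weak} with $\Phi=\Psi=(u,v)$. Ellipticity in \eqref{eqn:a_vk1} gives $A(\Psi,\Psi)\ge\|\Psi\|_2^2$, while symmetry of $b$ collapses $B(\Psi,\Psi,\Psi)=b(u,v,u)+b(v,u,u)-b(u,u,v)$ to $b(u,u,v)$. Testing \eqref{vk_weak2} with $v$ (i.e.\ choosing $\Phi=(0,v)$) identifies $b(u,u,v)=a(v,v)=\|v\|_2^2\ge0$. Since the right-hand side of \eqref{VKE_weak} contributes only $f(u)$, this yields $\|\Psi\|_2^2\le A(\Psi,\Psi)+b(u,u,v)=f(u)\le\|f\|_{-1}\|u\|_1\le\|f\|_{-1}\|\Psi\|_2$, and dividing gives $\|\Psi\|_2\lesssim\|f\|_{-1}$.

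For (ii) I would read each line of \eqref{vke.domain} as a clamped biharmonic problem. On the polygon $\Omega$ the map $\Delta^2\colon\hto\cap H^{2+\alpha}(\Omega)\to H^{\alpha-2}(\Omega)$ is an isomorphism for the corner-determined index $\alpha\in(1/2,1]$, so $\|w\|_{2+\alpha}\lesssim\|\Delta^2 w\|_{\alpha-2}$. I would bootstrap in the order dictated by the coupling: the Airy equation has source $-\tfrac12[u,u]$ depending only on $u$, giving $\|v\|_{2+\alpha}\lesssim\|[u,u]\|_{\alpha-2}\lesssim\|u\|_2^2\lesssim\|f\|_{-1}^2$; then the displacement equation has source $[u,v]+f$, and since $\alpha-2\le-1$ we have $\|f\|_{\alpha-2}\le\|f\|_{-1}$, so $\|u\|_{2+\alpha}\lesssim\|[u,v]\|_{\alpha-2}+\|f\|_{-1}\lesssim\|u\|_2\|v\|_{2+\alpha}+\|f\|_{-1}\lesssim\|f\|_{-1}^3+\|f\|_{-1}$. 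Summing the two and absorbing the intermediate $\|f\|_{-1}^2$ term (via $t^2\le t+t^3$) delivers $\|\Psi\|_{2+\alpha}\lesssim\|f\|_{-1}^3+\|f\|_{-1}$.

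The step I expect to be the main obstacle is the pair of bracket estimates $\|[u,u]\|_{\alpha-2}\lesssim\|u\|_2^2$ and $\|[u,v]\|_{\alpha-2}\lesssim\|u\|_2\|v\|_{2+\alpha}$ used above. For $\eta,\chi\in\hto$ the bracket $[\eta,\chi]$ is a priori only a sum of products of $L^2$ second derivatives, hence merely $L^1(\Omega)$, which does not embed into $H^{\alpha-2}(\Omega)$ for $\alpha$ close to $1$ in two dimensions; the required gain comes from the divergence structure already used in \eqref{defn.b}, namely that each row of $\cof(D^2\eta)$ is divergence free so that $[\eta,\chi]=\Div(\cof(D^2\eta)\,\nabla\chi)$ is a null (Jacobian-type) quantity rather than a generic product. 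Combined with the compensated-compactness integrability of such Jacobians and the two-dimensional Sobolev embeddings of $\hto$ and $H^{2+\alpha}(\Omega)$, this is what promotes the bracket into $H^{\alpha-2}(\Omega)$ with the stated bilinear control and makes the corner index $\alpha$ the sharp exponent. I would isolate this as a standalone mapping lemma, check that its exponent is compatible with the elliptic-regularity index, and then conclude as above; the corner analysis behind it is precisely what is carried out in \cite{BlumRannacher,ngr}.
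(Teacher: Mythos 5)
The paper offers no proof of this lemma at all---it is quoted verbatim from \cite{BlumRannacher,ngr}---and your reconstruction is the standard argument from those references: testing \eqref{VKE_weak} with $\Phi=\Psi$ and using the full symmetry of $b$ together with \eqref{vk_weak2} to reduce $B(\Psi,\Psi,\Psi)$ to the nonnegative term $a(v,v)$ gives the energy bound, and the $v$-then-$u$ bootstrap through the biharmonic shift theorem gives the cubic bound. Your argument is correct, and you have rightly isolated the only nontrivial ingredient, namely that the divergence/Jacobian structure of $[\cdot,\cdot]$ (rather than a naive product estimate) is what places $[u,u]$ in $H^{\alpha-2}(\Omega)$ for $u\in H^2_0(\Omega)$; that mapping property is exactly what the cited works establish, so nothing essential is missing.
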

\noindent Assume that the solution $\Psi=(u,v)$ is {\it regular} \cite{Brezzi,ng2}. That is, the linearised problem defined by: for given $G=(g_1,g_2) \in \bL^2(\O)$, find $\Theta \in \bv$ such that, for all $\Phi \in \bv$,
\begin{equation}\label{eqn.ctslinearised}
\cA(\Theta,\Phi):=A(\Theta,\Phi)+B(\Psi,\Theta,\Phi)+B(\Theta,\Psi,\Phi) =(G,\Phi)
\end{equation}
is well-posed and satisfies the a priori bounds
\begin{equation}\label{eqn.boundlinearised}
\| \Theta \|_2 \lesssim \|G\| \mbox{ and } \| \Theta\|_{2+\alpha}\lesssim \|G\|,
\end{equation}
where $\alpha$ is the index of elliptic regularity. This is equivalent to an inf-sup condition 
\begin{align}\label{inf-sup}
0<\beta:=\inf_{\substack{\Theta\in \bv\\ |\Theta|_2=1}}\sup_{\substack{\Phi\in \bv\\ |\Phi|_2=1}}\cA(\Theta,\Phi).
\end{align}It is well-known \cite{Knightly} that for sufficiently small $f$, the solution is unique and is a regular solution; but this paper aims at a local approximation of an arbitrary regular solution.

\medskip

\noindent	Let $\Psi$ be a regular solution to \eqref{VKE_Morleyweak}. Then the dual problem defined by: for $Q \in \bH^{-1}(\O),$ find $\bxi \in \bv$ such that
	\begin{equation}\label{eqn.dualcts}
	\cA_{}(\Phi,\bxi)=(Q,\Phi)\quad \forall\, \Phi \in \bv
	\end{equation}
	is well-posed and satisfies the a priori bounds \cite{ng2}:
	\begin{equation}\label{eqn.boundduallinearised}
	\| \bxi \|_2 \lesssim \|Q\|_{-1} \mbox{ and } \| \bxi\|_{2+\alpha}\lesssim \|Q\|_{-1}.
	\end{equation}
\section{Morley type virtual element method}\label{sec:MorleyVEM}
This section deals with the Morley type VEM proposed in \cite{Zhao_Morley} for \eqref{VKE_weak}. The Morley type virtual element is a nonconforming virtual element which has fewer degrees of freedom and not even $C^0$ continuous; it is a simplified version of the $C^0$ continuous nonconforming virtual element presented in \cite{Zhao_ncfem}. 

\medskip

\noindent Let $\T_h$ be a decomposition of $\O$ into non-overlapping simple polygons. Let $\cE_h$ denotes the set of edges $e$ in $\T_h$ and $h_K$ denotes the diameter of the element $K$. Let $h_{\max}$ be the maximum of the diameters of all the elements of the mesh, i.e., $h_{\max}=\max_{K \in \T_h} h_K$. For any $K \in \cT_h$, let $n_K$ denotes its unit outward normal vector along the boundary $\partial K$. The unit normal of an edge $e \in \cE_h$ is denoted by $n_e$, whose orientation is chosen arbitrarily but fixed for internal edges and coinciding with the outward normal of $\O$ for boundary edges. Define the jump $\jump{\varphi}_e:=\varphi|_{K_+}-\varphi|_{K_-}$ and the average $\langle \varphi \rangle _e:=\half\left(\varphi|_{K_+}+\varphi|_{K_-}\right)$ across the interior edge $e$ of $\varphi\in H^1(\cT_h)$ of the adjacent triangles  $K_+$ and $K_-$. Extend the definition of the jump and the average to an edge on boundary by $\jump{\varphi}_e:=\varphi|_e$ and $\langle \varphi\rangle_e:=\varphi|_e$ for $e\in \cE(\partial\Omega)$.  For any vector function, the jump and the average are understood component-wise.
\smallskip

\noindent For a non-negative integer $m$ and $D \subseteq \R^2$, ${\mathcal P}_{m}(D)$ denotes the space of polynomials of degree atmost equal to $m$ on $D$ and
\[\P_m(\cT_h):=\{q \in L^2(\O):q|_K \in \P_m(K) \quad \forall\; K \in \cT_h\}.\]
\noindent For $\Phi =(\varphi_1,\varphi_2) \in W^{m,p}(\T_h)$,  where $ W^{m,p}(\T_h)$ denotes the broken Sobolev space with respect to $\T_h$, $\| \Phi \|_{m,p,h}^2:=|\varphi_1|_{m,p,h}^2+|\varphi_2|_{m,p,h}^2$,  and $|\varphi_i|_{m,p,h}=(\sum_{K\in \mathcal{T}}|\varphi_i|^{p}_{m,p,K})^{1/p}$, $i=1,2$; with $|\cdot|_{m,p,K}$ denoting the usual semi-norm in $W^{m,p}(K)$. When $p=2$, the corresponding norms are denoted by $\|\bullet\|_{m,h}$ and $|\bullet|_{m,h}$. The notation $\bX$ is used to denote the product space $X\times X$.

\medskip

\noindent Assume that there exists a positive real number $C_\T$ such that, for every $K \in \T_h$ \cite{Veiga_basicVEM}:
\begin{itemize}
	\item[({\bf A1})]  $K \in \cT_h$ is star-shaped with respect to every point of a ball of radius $C_{\T}h_K$,
	\item[({\bf A2})] the ratio between the shortest edge and the diameter $h_K$ of $K$ is larger than $C_\cT$.
\end{itemize}
\noindent From \cite{LCJH_errorVEM}, we have that if the mesh $\cT_h$ fulfilling the assumptions ({\bf A1}) and ({\bf A2}), then the mesh also satisfies the following property:

\noindent ({\bf P1}): For each $K \in \T_h$, there exists a virtual triangulation $\cT_h^K$ of $K$ such that $\cT_h^K$ is uniformly shape regular and quasi-uniform. The corresponding mesh size $h_T$ of $\cT_h^K$ is proportional to $h_K$. Every edge of $K$ is a side of a certain triangle in $\cT_h^K$.

\medskip

\noindent For every $K \in \cT_h$, the local shape function space $\widetilde{V}_h(K)$ \cite{Zhao_ncfem,Antonietti_ncVEM} is defined by
\begin{equation}\label{def.vhtildek}
 \widetilde{V}_h(K):=\displaystyle\left\{\varphi \in H^2(K);\Delta^2\varphi =0, \varphi_{|e} \in \P_2(e), \Delta \varphi_{|e} \in \P_{0}(e), e \subseteq \partial K \right\}.
\end{equation}
Obviously, $\P_2(K)\subseteq \widetilde{V}_h(K)$. The degrees of freedom on $\widetilde{V}_h(K)$ are
\begin{itemize}
	\item The values of $\varphi(a_i)$, $\forall$ vertex $a_i$,
	\item The moments $\displaystyle \frac{1}{h_e}\int_e\varphi\ds$, $\forall$ edge $e$,
	\item The moments $\displaystyle \int_e\frac{\partial \varphi}{\partial n_e}\ds$, $\forall$ edge $e$.
\end{itemize}
\noindent For each $K$ and any given $\varphi \in \widetilde{V}_h(K)$, define a projection operator $\Pi^K:\widetilde{V}_h(K) \to \P_2(K)\subseteq \widetilde{V}_h(K)$ as the solution to
\begin{subequations}\label{eqn.projectionPiK}
	\begin{align}
a^K(\Pi^K \varphi,q)&=a^K(\varphi,q) \quad \forall \; q \in P_2(K)\label{eqn.projectionPiK1}
\\
\widehat{\Pi^K(\varphi)}&=\widehat{\varphi}\label{eqn.projectionPiK2}\\
\hspace{-2.2cm}\int_{\partial K}\nabla \Pi^K \varphi \ds&=\int_{\partial K}\nabla \varphi \ds\label{eqn.projectionPiK3}
\end{align}
\end{subequations}
\noindent such that $\Pi^K q=q$ for all $q \in \P_2(K)$ and $\Pi^K$ is computable from the above degrees of freedom. Here, $a^K(\bullet,\bullet)$ is the restriction of the continuous bilinear form $a(\bullet,\bullet)$ on the element $K$ and
\begin{equation}\label{def.hat}
\widehat \varphi=\frac{1}{n}\sum_{i=1}^{n}\varphi(a_i).
\end{equation}
\noindent This $C^0$-nonconforming virtual element is modified to fully nonconforming virtual element in \cite{Zhao_Morley} such that the dimension of the shape function space and the degrees of freedom are reduced. Define the local shape function space $V_h(K)$ on a polygon $K$ by
\begin{equation}\label{def.vhk}
\displaystyle V_h(K):=\left\{\varphi \in \widetilde{V}_h(K):\int_e \Pi^K\varphi\ds=\int_e \varphi\ds \; \forall e \subseteq \partial K\right\}.
\end{equation}
Note that $\widetilde{V}_h(K) \subseteq V_h(K)$ and since $\Pi^Kq=q$ for all $q \in \P_2(K)$, $\P_2(K) \subseteq V_h(K)$. The degrees of freedom on ${V}_h(K)$ are
\begin{itemize}
	\item The values of $\varphi(a_i)$, $\forall$ vertex $a_i$,
	\item The moments $\displaystyle \int_e\frac{\partial \varphi}{\partial n_e}\ds$, $\forall$ edge $e$.
\end{itemize}
Comparing with the degrees of freedom associated with $\widetilde{V}_h(K)$, the zero-order moments of $\varphi$ on edges are removed in the above degrees of freedom. 
\noindent Another fully nonconforming virtual element is presented in \cite{Antonietti_ncVEM} with the same degrees of freedom as above, but with a different local virtual space.
\begin{rem}\label{rem.morley}
	The special case of $V_h(K)$ with $K$ as a triangle together with 6 degrees of freedom leads to $V_h(K)=\P_2(K)$. This shows that, for the (lowest-order) triangular case, the simplified nonconforming virtual element coincides with the Morley nonconforming finite element \cite{Ciarlet} with the same degrees of freedom. Hence, the simplified nonconforming virtual element can be viewed as the extension of the Morley element to polygonal meshes.
\end{rem}

\noindent For every decomposition $\T_h$ of $\O$ into simple polygons $K$, define the global space $V_h$ by 
\begin{align}
V_h:=\Big\{&\varphi_h \in L^2(\O);\,\varphi_{h|K} \in V_h(K) \,\forall\; K \in \cT_h, \varphi_h \text{ is continuous at the internal vertices}\\
&\text{ and vanishes at the boundary vertices, }\int_e\jump{\frac{\partial \varphi_h}{\partial n_e}}\ds=0 \; \forall \;e \in \cE_h\Big\}.\label{defn.vh}
\end{align}
\noindent The global degrees of freedom on ${V}_h$ are
\begin{itemize}
	\item The values of $\varphi_h(a_i)$, $\forall$ internal vertex $a_i$,
	\item The moments $\displaystyle \int_e\frac{\partial \varphi_h}{\partial n_e}\ds$, $\forall$ internal edge $e$.
\end{itemize}
The space $V_h$ is not a subspace of $H^2_0(\O)$ and not even $C^0$ continuous over $\O$; hence the simplified virtual element is fully nonconforming. Moreover,
$\dim(V_h)=N_V+N_E,$ where $N_V$ is the number of internal vertices of $\T_h$ and $N_E$ is the number of internal edges, see \cite{Zhao_Morley} for more details.

\smallskip

\noindent For each element $K \in \T_h$, let $\chi_i$ denote the operator associated with the $i$th degree of freedom, $i=1,\cdots,N^K$. The construction of $V_h$ shows that for every smooth enough function $\varphi$ there exists a unique element, usually known as the interpolant of $\varphi$ restricted to $K$, $\varphi_I^K \in V_h(K)$ such that
\begin{equation}\label{def.phiI}
\chi_i(\varphi-\varphi_I^K)=0, \quad i=1,\cdots,N^K.
\end{equation}
\begin{lem}[Interpolation error]\cite{Brenner, Zhao_Morley}\label{lem.vI}
	For every $K \in \cT_h$ and every $\varphi\in H^s(K)$ with $2 \le s\le 3$, it holds that
	\[\|\varphi-\varphi_I^K\|_{m,K} \lesssim h_K^{s-m}|\varphi|_{s,K},\quad m=0,1,2.\]
\end{lem}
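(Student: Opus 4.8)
The plan is to interpolate through a suitably chosen quadratic polynomial and split the error by the triangle inequality. The interpolation operator $\varphi\mapsto\varphi_I^K$ defined by \eqref{def.phiI} is linear, and because $\P_2(K)\subseteq V_h(K)$ with the listed degrees of freedom unisolvent on $V_h(K)$, it reproduces quadratics, i.e. $q_I^K=q$ for every $q\in\P_2(K)$. Hence for any $p\in\P_2(K)$ one has $p-\varphi_I^K=(p-\varphi)_I^K$, so that
\[
\|\varphi-\varphi_I^K\|_{m,K}\le \|\varphi-p\|_{m,K}+\big\|(\varphi-p)_I^K\big\|_{m,K},\qquad m=0,1,2.
\]
It therefore suffices to (i) choose $p$ realising optimal polynomial approximation of $\varphi$, and (ii) bound the interpolant of the remainder $\psi:=\varphi-p$, i.e. to prove stability of the interpolation operator on $K$.

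For (i) I would invoke the Bramble--Hilbert (Dupont--Scott) lemma on $K$. Assumption (\textbf{A1}) guarantees that $K$ is star-shaped with respect to a ball of radius $\gtrsim h_K$, so its chunkiness parameter is controlled solely by $C_\cT$; consequently there is $p\in\P_2(K)$ with $|\varphi-p|_{j,K}\lesssim h_K^{s-j}|\varphi|_{s,K}$ for $j=0,1,2$, with a constant depending only on $s$ and $C_\cT$. This controls the first term above for each $m$.

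The crux is step (ii). I would first establish, for the implicit space $V_h(K)$, a scaled norm equivalence expressing the local energy of $v_h$ through the two families of degrees of freedom of $V_h(K)$ (vertex values and normal-derivative moments), namely
\[
\|v_h\|_{0,K}^2+h_K^2|v_h|_{1,K}^2+h_K^4|v_h|_{2,K}^2\;\lesssim\; h_K^2\sum_i |v_h(a_i)|^2+h_K^3\sum_{e\subseteq\partial K}h_e^{-1}\Big(\int_e\frac{\partial v_h}{\partial n_e}\ds\Big)^2
\]
for all $v_h\in V_h(K)$. A convenient route exploits that every $v_h\in V_h(K)\subseteq\widetilde{V}_h(K)$ is biharmonic on $K$ with piecewise-polynomial boundary traces by \eqref{def.vhtildek}; a scaling and elliptic-regularity argument on $K$ then bounds the interior $H^2$-norm by the boundary data, which in turn is controlled by the listed degrees of freedom, while property (\textbf{P1}) together with (\textbf{A1})--(\textbf{A2}) supplies the shape-regular virtual triangulation making all constants $h_K$-uniform and the numbers of vertices and edges bounded. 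Applying this to $v_h=\psi_I^K$, whose degrees of freedom coincide with those of $\psi=\varphi-p$, I would estimate each functional of $\psi$ by scaled trace and embedding inequalities: the vertex values via $\|\psi\|_{L^\infty(K)}\lesssim h_K^{-1}\|\psi\|_{0,K}+|\psi|_{1,K}+h_K|\psi|_{2,K}$ (valid for $s>1$), and the normal-derivative moments via Cauchy--Schwarz followed by $\big\|\tfrac{\partial \psi}{\partial n_e}\big\|_{0,e}^2\lesssim h_K^{-1}|\psi|_{1,K}^2+h_K|\psi|_{2,K}^2$. Inserting the Bramble--Hilbert bounds for $|\psi|_{j,K}$ makes the right-hand side $\lesssim h_K^{2s}|\varphi|_{s,K}^2$, whence $\|\psi_I^K\|_{0,K}\lesssim h_K^{s}|\varphi|_{s,K}$, $|\psi_I^K|_{1,K}\lesssim h_K^{s-1}|\varphi|_{s,K}$ and $|\psi_I^K|_{2,K}\lesssim h_K^{s-2}|\varphi|_{s,K}$; combined with (i) this gives the claim for $m=0,1,2$.

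I expect the norm equivalence for $V_h(K)$ to be the main obstacle: since the virtual shape functions are not explicit, bounding $\|v_h\|_{2,K}$ by the degrees of freedom amounts to controlling a biharmonic function by its boundary data with $h_K$-uniform constants, and one must track the $h_K$-powers carefully so that the three seminorms balance into the single factor $h_K^{s-m}$. The vertex point-evaluations are the other delicate point, well defined only for $s>1$; the hypothesis $s\ge 2$ leaves ample room, and the bounded cardinality of vertices and edges of $K$ (ensured by (\textbf{A2})) keeps the summations harmless.
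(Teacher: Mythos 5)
The paper does not prove this lemma at all: it is quoted with the citations \cite{Brenner,Zhao_Morley}, so there is no in-paper argument to match yours against. Judged on its own, your outline is the standard and correct strategy for VEM interpolation estimates: polynomial reproduction ($q_I^K=q$ for $q\in\P_2(K)$, which follows from $\P_2(K)\subseteq V_h(K)$ and unisolvence), the splitting $\varphi-\varphi_I^K=(\varphi-p)-(\varphi-p)_I^K$, Bramble--Hilbert under ({\bf A1}), and stability of the interpolation operator expressed through the degrees of freedom. Your scaling bookkeeping is consistent (each term of the proposed dof norm evaluates to $h_K^{2s}|\varphi|_{s,K}^2$ after inserting the trace and $L^\infty$ bounds, which are exactly Lemma~\ref{lem:PF}.c and a scaled trace inequality, both legitimate for $s\ge 2$).

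The honest gap, which you flag yourself, is the norm equivalence on $V_h(K)$: bounding $\|v_h\|_{0,K}^2+h_K^2|v_h|_{1,K}^2+h_K^4|v_h|_{2,K}^2$ by the scaled dof values is the entire content of the lemma, and ``a scaling and elliptic-regularity argument'' is not yet a proof. Two points deserve care. First, the boundary data of $v_h\in V_h(K)$ are a Dirichlet trace in piecewise $\P_2$ plus a piecewise-constant trace of $\Delta v_h$, and the zeroth-order edge moments of $v_h$ are not free dofs but are slaved to $\int_e\Pi^K v_h\ds$ through \eqref{def.vhk}; so the a priori bound for the defining biharmonic problem must be run through the unisolvence argument of \cite{Zhao_Morley}, not through a generic Dirichlet--Neumann estimate. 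Second, for this reason the shortcut via Lemma~\ref{lem:PF}.a (which works cleanly for $\varphi_h-\Pi^K\varphi_h$ in Lemma~\ref{lem:vh-PiKvh}) does not directly give $\|\varphi-\varphi_I^K\|_{0,K}\lesssim h_K^2|\varphi-\varphi_I^K|_{2,K}$, because $\int_{\partial K}(\varphi-\varphi_I^K)\ds$ need not vanish; your route through full dof stability avoids that trap but must then actually establish the $L^2$ part of the equivalence. With that lemma supplied (it is proved in the cited references, with constants uniform under ({\bf A1})--({\bf A2}) via ({\bf P1})), your argument closes correctly.
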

\begin{lem}[Polynomial error]\cite{Brenner,Ciarlet,Zhao_Morley}\label{lem.vpi}
	For every $K \in \cT_h$ and every $\varphi\in H^s(K)$ with $2 \le s\le 3$, there exists a polynomial $\varphi_\pi^K \in \P_2(K)$ such that
	\[\|\varphi-\varphi_\pi^K\|_{m,K} \lesssim h_K^{s-m}|\varphi|_{s,K},\quad m=0,1,2.\]
\noindent Also, $\|\varphi-\varphi_\pi^K\|_{1,4,K} \lesssim h_K^{s-3/2}|\varphi|_{s,K}.$
\end{lem}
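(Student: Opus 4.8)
The plan is to construct $\varphi_\pi^K$ as the degree-two averaged (Sobolev) Taylor polynomial of $\varphi$ over a ball contained in $K$, and then to combine the Bramble--Hilbert / Dupont--Scott estimates on a rescaled copy of $K$ with a dilation argument to recover the explicit powers of $h_K$. First I would normalise the element: the dilation $x \mapsto \widehat{x} = x/h_K$ carries $K$ onto a set $\widehat{K}$ of unit diameter which, by assumption (A1), is star-shaped with respect to a ball of radius at least $C_\T$. This is the decisive point, because it is precisely star-shapedness with respect to a ball of uniformly controlled relative radius (the \emph{chunkiness} parameter) that makes the averaged Taylor polynomial well defined and makes the constant in the Bramble--Hilbert estimate depend only on $C_\T$ and $s$, hence mesh-independent.

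On the normalised element I would invoke the Dupont--Scott polynomial approximation theory: for $\widehat{\varphi} \in H^s(\widehat{K})$ with $2 \le s \le 3$ and the associated averaged Taylor polynomial $\widehat{\varphi}_\pi \in \P_2(\widehat{K})$ one has $|\widehat{\varphi} - \widehat{\varphi}_\pi|_{m,\widehat{K}} \lesssim |\widehat{\varphi}|_{s,\widehat{K}}$ for $m = 0,1,2$, with constant depending only on $C_\T$ and $s$. Transporting this back through the dilation and using the two-dimensional scaling relations $|\widehat{\varphi}|_{s,\widehat{K}} = h_K^{s-1}|\varphi|_{s,K}$ and $|\varphi - \varphi_\pi^K|_{m,K} = h_K^{1-m}|\widehat{\varphi} - \widehat{\varphi}_\pi|_{m,\widehat{K}}$ yields $|\varphi - \varphi_\pi^K|_{m,K} \lesssim h_K^{s-m}|\varphi|_{s,K}$. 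Summing the seminorms $j = 0,\dots,m$ and noting that for $h_K \le 1$ the lowest exponent $s-m$ dominates gives the asserted norm estimate $\|\varphi - \varphi_\pi^K\|_{m,K} \lesssim h_K^{s-m}|\varphi|_{s,K}$.

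For the $W^{1,4}$ bound I would keep the same polynomial $\varphi_\pi^K$ and argue again on $\widehat{K}$. Since we work in two dimensions and $s \ge 2$, the Sobolev embedding $H^2(\widehat{K}) \hookrightarrow W^{1,4}(\widehat{K})$ (with embedding constant again controlled through the star-shapedness of $\widehat{K}$) gives $\|\widehat{\varphi} - \widehat{\varphi}_\pi\|_{1,4,\widehat{K}} \lesssim \|\widehat{\varphi} - \widehat{\varphi}_\pi\|_{2,\widehat{K}} \lesssim |\widehat{\varphi}|_{s,\widehat{K}}$, the last step being the Bramble--Hilbert estimate of the previous paragraph. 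The only change is in scaling back: in two dimensions the first-order $L^4$ seminorm obeys $|\varphi - \varphi_\pi^K|_{1,4,K} = h_K^{-1/2}|\widehat{\varphi} - \widehat{\varphi}_\pi|_{1,4,\widehat{K}}$, so combining with $|\widehat{\varphi}|_{s,\widehat{K}} = h_K^{s-1}|\varphi|_{s,K}$ produces the extra factor $h_K^{-1/2}$ and hence $\|\varphi - \varphi_\pi^K\|_{1,4,K} \lesssim h_K^{s-3/2}|\varphi|_{s,K}$; the $L^4$ part of the norm contributes the higher power $h_K^{s-1/2}$ and is absorbed.

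The hard part will be the uniform control of constants over the whole mesh. Because the elements $K$ are arbitrary polygons whose normalised shapes $\widehat{K}$ vary from element to element, both the Bramble--Hilbert constant and, more delicately, the Sobolev embedding constant for $H^2(\widehat{K}) \hookrightarrow W^{1,4}(\widehat{K})$ must be shown to depend only on the chunkiness parameter $C_\T$ and not on the individual geometry. This is exactly what assumptions (A1)--(A2) are designed to guarantee; one makes it rigorous either directly through the Dupont--Scott theory on star-shaped domains, or equivalently through the virtual sub-triangulation of property (P1), on each triangle of which a classical affine Bramble--Hilbert argument applies, the local pieces then being reassembled using the uniform shape-regularity and quasi-uniformity of $\cT_h^K$.
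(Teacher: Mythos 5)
The paper does not prove this lemma at all; it is quoted directly from the cited references (Brenner--Scott, Ciarlet, Zhao et al.), so there is no in-paper argument to compare against. Your proof is correct and is precisely the standard argument underlying those references: the averaged Taylor polynomial on the star-shaped element, the Dupont--Scott/Bramble--Hilbert estimate with constant controlled by the chunkiness parameter from (A1), the dilation scalings $|\widehat{\varphi}|_{s,\widehat{K}} = h_K^{s-1}|\varphi|_{s,K}$ and $|\varphi-\varphi_\pi^K|_{1,4,K} = h_K^{-1/2}|\widehat{\varphi}-\widehat{\varphi}_\pi|_{1,4,\widehat{K}}$, and the uniform Sobolev embedding $H^2(\widehat{K})\hookrightarrow W^{1,4}(\widehat{K})$ for the last assertion; the only point worth making explicit is that for non-integer $s\in(2,3)$ the Bramble--Hilbert bound is obtained by interpolating between the integer cases $s=2$ and $s=3$, which the Dupont--Scott theory supplies.
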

\noindent For each polygon $K$, define the discrete local bilinear form on $V_h(K) \times V_h(K)$ by
\begin{equation}\label{defn.ahK}
a_h^K(\varphi_h,\psi_h):=a^K(\Pi^K\varphi_h,\Pi^K \psi_h)+S^K(\varphi_h-\Pi^K\varphi_h,\psi_h-\Pi^K\psi_h),\, \forall\,\varphi_h,\psi_h \in V_h(K),
\end{equation}
where $\Pi^K$ is the projection operator defined in \eqref{eqn.projectionPiK} and $S^K(\bullet,\bullet)$ is a symmetric and positive definite bilinear form satisfying
 \begin{equation}\label{defn.SK}
 c_0 a^K(\varphi_h,\varphi_h)\le S^K(\varphi_h,\varphi_h)\le c_1a^K(\varphi_h,\varphi_h) \quad \forall \;\varphi_h \in \ker(\Pi^K)
 \end{equation}
 for some positive constants $c_0$ and $c_1$ independent of $K$ and $h_K$. It is clear from \eqref{defn.SK} that $S^K$ must scale like $a^K(\bullet,\bullet)$ on the kernel of $\Pi^K$. As in \cite{Brezzi_VEM_platebending}, set
 \[S^K(\varphi,\psi)=\sum_{i=1}^{N^K}\chi_i(\varphi)\chi_i(\psi)h_i^{-2},\]
 where $h_i$ is the characteristic length attached to each degree of freedom $\chi_i$. 
 
 \medskip
 
 \noindent The standard arguments \cite{Veiga_basicVEM} reveals the consistency and stability properties of $a_h^K(\bullet,\bullet)$. That is, 
 \begin{equation}
 a_h^K(p,\varphi_h)=a^K(p,\varphi_h) \quad \forall\, p \in \P_2(K),\,\forall \,\varphi_h \in V_h(K)\label{eqn.consistency}
\end{equation}
and there exists two positive constants $\alpha_{\ast}$ and $\alpha^{\ast}$ independent of $h$ and $K$ such that
\begin{equation}
 \alpha_\ast a^K(\varphi_h,\varphi_h)\le a_h^K(\varphi_h,\varphi_h)\le \alpha^\ast a^K(\varphi_h,\varphi_h)\quad \forall \,\varphi_h \in V_h(K). \label{eqn.stability}
 \end{equation}
 \noindent The global discrete bilinear form $a_h:V_h\times V_h \to \R$ is then defined by
 \begin{equation}\label{defn.ah}
 a_h(\varphi_h,\psi_h):=\sum_{K \in \cT_h}a_h^K(\varphi_h,\psi_h)\quad \forall \; \varphi_h,\psi_h \in V_h.
 \end{equation}
The construction of the discrete trilinear form associated with discrete weak formulation is as follows. Define, for all  $\varphi_h,\psi_h,\theta_h \in V_h(K)$,
\begin{equation}\label{defn.bhk}
b_{h}^K(\varphi_h,\psi_h,\theta_h)=\int_K \cof(D^2(\Pi^K\varphi_h))\nabla (\Pi^K \psi_h)\cdot \nabla (\Pi^K\theta_h)\dx.
\end{equation}
 Then the global discrete trilinear form $b_h:V_h \times V_h \times V_h \to \R$ is defined by
  \begin{equation}\label{defn.bh}
 b_h(\varphi_h,\psi_h,\theta_h):=\sum_{K \in \cT_h}b_h^K(\varphi_h,\psi_h,\theta_h)\quad \forall \; \varphi_h,\psi_h,\theta_h \in V_h.
 \end{equation}
 For constructing the linear form on the right-hand side, let $P_0^K(f)$ denote the $L^2$ projection of load $f$ onto $\P_0(K)$.
 Then the right hand side is defined by \cite{Zhao_Morley}
 \begin{equation}\label{defn.fh}
\displaystyle  \langle f_h,\varphi_h\rangle := 
\sum_{K \in \cT_h}(P_0^K(f),\widehat{\varphi_h})
 \end{equation}
with $\widehat{\varphi_h}$ from \eqref{def.hat}. The Morley type nonconforming virtual element discretisation associated with \eqref{VKE_weak} seeks $\Psi_h:=(u_h,v_h) \in \bv_h:=V_h \times V_h$ such that
\begin{equation}\label{VKE_Morleyweak}
A_h(\Psi_h,\Phi_h)+B_h(\Psi_h,\Psi_h,\Phi_h)=F_h(\Phi_h) \quad \forall\,\Phi_h\in \bv_h,
\end{equation}
where for all $\Xi_h=(\xi_{1,h},\xi_{2,h}),\Theta_h=(\theta_{1,h},\theta_{2,h})$, and $\Phi=(\varphi_{1,h},\varphi_{2,h})\in  \bv_h$,
\begin{subequations}
\begin{equation}
\hspace{-6.4cm}{A}_h(\Theta_h,\Phi_h):={} a_h(\theta_{1,h},\varphi_{1,h}) + a_h(\theta_{2,h},\varphi_{2,h}), \;\label{defn.Ah}
\end{equation}
\vspace{-0.7cm}
\begin{equation}
B_h(\Xi_h,\Theta_h,\Phi_h):={} b_h(\xi_{1,h},\theta_{2,h},\varphi_{1,h})+b_h(\xi_{2,h},\theta_{1,h},\varphi_{1,h})-b_h(\xi_{1,h},\theta_{1,h},\varphi_{2,h}),\label{defn.Bh}
\end{equation}
\vspace{-0.6cm}
\begin{equation}
\hspace{-9.8cm}F_h(\Phi_h)=(\langle f_h,\varphi_{1,h}\rangle,0)\label{defn.Fh}
\end{equation}
\end{subequations}
with $a_h(\bullet,\bullet)$, $b_h(\bullet,\bullet,\bullet)$ and $\langle f_h,\bullet\rangle$ from \eqref{defn.ah}, \eqref{defn.bh}, and \eqref{defn.fh} respectively.
\medskip

\noindent For all $\varphi+\varphi_h,\psi+\psi_h, \theta+\theta_h \in V+V_h$, extend the definition of $b_h^K(\bullet,\bullet,\bullet)$ on $V_h(K)$ to $\widehat{b}_h^K(\bullet,\bullet,\bullet)$ on $V+V_h(K)$ as
\begin{equation}\label{defn.bhkhat}
\widehat{b}_h^K(\varphi+\varphi_h, \psi+\psi_h,\theta+\theta_h)=\int_K \cof(D^2(\varphi+\Pi^K\varphi_h))\nabla (\psi+\Pi^K \psi_h)\cdot \nabla (\theta+\Pi^K\theta_h)\dx.
\end{equation}
 Then the global discrete trilinear form $\widehat{b}_h:(V+V_h) \times (V+V_h)\times (V+V_h)\to \R$ is defined by
\begin{equation}\label{defn.bhhat}
\widehat{b}_h(\varphi+\varphi_h, \psi+\psi_h,\theta+\theta_h):=\sum_{K \in \cT_h}\widehat{b}_h^K(\varphi+\varphi_h, \psi+\psi_h,\theta+\theta_h).
\end{equation}
For all $\Xi_h=(\xi_{1,h},\xi_{2,h}),\Theta_h=(\theta_{1,h},\theta_{2,h})$, and $\Phi=(\varphi_{1,h},\varphi_{2,h})\in  \bv+\bv_h$,
\begin{equation}\label{defn.Bhhat}
\hB_h(\Xi_h,\Theta_h,\Phi_h):={} \widehat{b}_h(\xi_{1,h},\theta_{2,h},\varphi_{1,h})+\widehat{b}_h(\xi_{2,h},\theta_{1,h},\varphi_{1,h})-\widehat{b}_h(\xi_{1,h},\theta_{1,h},\varphi_{2,h}).
\end{equation}
\noindent The Morley type nonconforming virtual element formulation corresponding to the continuous linearised problem \eqref{eqn.ctslinearised} seeks $\Theta_h \in \bV_h$ such that
\begin{equation}\label{eqn.discretelinearised}
\cA_{h}(\Theta_h,\Phi_h):=A_h(\Theta_h,\Phi_h)+\hB_h(\Psi,\Theta_h,\Phi_h)+\hB_h(\Theta_h,\Psi,\Phi_h) =G_h(\Phi_h)\quad \forall \, \Phi_h \in \bV_h,
\end{equation}
where $G_h(\Phi_h)=(\langle g_{1,h},\varphi_{1,h}\rangle,\langle g_{2,h},\varphi_{2,h} \rangle)$ with $\langle \bullet,\bullet\rangle$ as in \eqref{defn.fh}.

\medskip

\noindent Define the broken semi-norm on $V_h$ by
\begin{equation}\label{eqn.norm}
|\varphi_h|_{m,h}:=\left(\sum_{K \in \cT_h}|\varphi_h|_{m,K}^2\right)^{1/2}, \quad m=1,2.
\end{equation}
Then, the piecewise version of the energy norm in $H^2(\cT_h)\equiv \prod\limits_{K \in \cT} H^2(K)$, $|\bullet|_{2,h}$, is a norm on $V_h$ \cite[Lemma 5.1]{Zhao_Morley}. This, in particular, implies
\begin{equation}\label{eqn.seminormbound}
\|\varphi_h\|^2+|\varphi_h|_{1,h}^2 \lesssim |\varphi_h|_{2,h}^2.
\end{equation}

%
\section{Well-posedness, existence and uniqueness}\label{sec:wellposed}
\noindent This section presents some auxiliary results that are useful to establish the convergence analysis and is followed by the well-posedness of the discrete linearised problem in Section \ref{sec:wellposedness_subsec}. Section~\ref{sec:existence} discusses the existence and local uniqueness of the discrete solution.
\begin{lem}[Boundedness and coercivity]\label{lem:Ahproperties}
Any $\Phi_h,\Theta_h \in \bv_h$ satisfy
\begin{itemize}
	\item[(a)] $A_h(\Phi_h,\Theta_h)\lesssim |\Phi_h|_{2,h}|\Theta_h|_{2,h}$.
	\item[(b)] $A_h(\Phi_h,\Phi_h)\gtrsim |\Phi_h|_{2,h}^2.$
	\item[(c)] $F_h(\Phi_h) \lesssim \|f\||\Phi_h|_{2,h}.$
\end{itemize}	
\end{lem}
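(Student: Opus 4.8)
The plan is to reduce the three bounds to the elementwise stability of $a_h^K$ in \eqref{eqn.stability} together with the identity $a^K(\varphi,\varphi)=|\varphi|_{2,K}^2$, which is immediate from $a^K(\varphi,\chi)=\int_K D^2\varphi:D^2\chi\,\mathrm{d}x$. Parts (a) and (b) are then essentially bookkeeping, whereas part (c) carries the genuine work, since it requires the vertex-average load functional to be controlled by the broken energy seminorm.

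For (b), I would apply the lower bound in \eqref{eqn.stability} on each $K\in\cT_h$, giving $a_h^K(\varphi_h,\varphi_h)\ge\alpha_\ast a^K(\varphi_h,\varphi_h)=\alpha_\ast|\varphi_h|_{2,K}^2$, sum over $K$ through the definition \eqref{defn.ah}, and add the two components of $\Phi_h=(\varphi_{1,h},\varphi_{2,h})$ to reach $A_h(\Phi_h,\Phi_h)\ge\alpha_\ast|\Phi_h|_{2,h}^2$. For (a), the form $a_h^K$ is symmetric and positive semidefinite by construction, so it obeys its own Cauchy--Schwarz inequality; combined with the upper bound in \eqref{eqn.stability} this yields $a_h^K(\varphi_h,\psi_h)\le a_h^K(\varphi_h,\varphi_h)^{1/2}a_h^K(\psi_h,\psi_h)^{1/2}\le\alpha^\ast|\varphi_h|_{2,K}|\psi_h|_{2,K}$. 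A discrete Cauchy--Schwarz over $K$ and then over the two components gives $A_h(\Phi_h,\Theta_h)\lesssim|\Phi_h|_{2,h}|\Theta_h|_{2,h}$.

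The main obstacle is (c). Interpreting $F_h(\Phi_h)$ as the scalar $\langle f_h,\varphi_{1,h}\rangle$ of \eqref{defn.fh}, I would first observe that on each element $\widehat{\varphi_{1,h}}$ from \eqref{def.hat} is a constant and $P_0^K(f)$ is the $L^2(K)$-mean of $f$, so $(P_0^K(f),\widehat{\varphi_{1,h}})_K=(f,\widehat{\varphi_{1,h}})_K$; summing, $\langle f_h,\varphi_{1,h}\rangle=(f,\widehat{\varphi_{1,h}})_{L^2(\Omega)}$ with $\widehat{\varphi_{1,h}}$ regarded as the piecewise constant equal to the vertex average on each $K$. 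Cauchy--Schwarz gives $|\langle f_h,\varphi_{1,h}\rangle|\le\|f\|\,\|\widehat{\varphi_{1,h}}\|$, so the crux is the estimate $\|\widehat{\varphi_{1,h}}\|\lesssim|\varphi_{1,h}|_{2,h}$.

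To prove this estimate, the key ingredient is the scaled Sobolev embedding $H^2(K)\hookrightarrow C^0(\overline{K})$ in two dimensions, which is legitimate because $\varphi_{1,h}|_K\in V_h(K)\subset H^2(K)$ has well-defined point values; using the shape-regular virtual triangulation from property (P1) one obtains, uniformly in $K$, $|\varphi_{1,h}(a_i)|\lesssim h_K^{-1}\|\varphi_{1,h}\|_{0,K}+|\varphi_{1,h}|_{1,K}+h_K|\varphi_{1,h}|_{2,K}$. Averaging over the vertices and multiplying by $|K|^{1/2}\lesssim h_K$ yields $\|\widehat{\varphi_{1,h}}\|_{0,K}\lesssim\|\varphi_{1,h}\|_{0,K}+h_K|\varphi_{1,h}|_{1,K}+h_K^2|\varphi_{1,h}|_{2,K}$. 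Squaring, summing over $K$, bounding each $h_K$ by the fixed diameter of $\Omega$, and finally invoking the discrete embedding \eqref{eqn.seminormbound}, $\|\varphi_{1,h}\|^2+|\varphi_{1,h}|_{1,h}^2\lesssim|\varphi_{1,h}|_{2,h}^2$, collapses the right-hand side to $|\varphi_{1,h}|_{2,h}$ and hence, since $|\varphi_{1,h}|_{2,h}\le|\Phi_h|_{2,h}$, proves (c). I expect the only delicate point to be securing a \emph{uniform} embedding constant over the polygonal family, which is exactly what the mesh assumptions (A1)--(A2) and property (P1) are designed to deliver.
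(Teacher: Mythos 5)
Your proposal is correct. Parts (a) and (b) coincide with the paper's own argument: Cauchy--Schwarz for the symmetric positive semidefinite form $a_h^K$, the stability bounds \eqref{eqn.stability}, the identity $a^K(\varphi,\varphi)=|\varphi|_{2,K}^2$, and summation over the elements and the two components. For (c) you take a mildly different route: you use that $\widehat{\varphi_{1,h}}$ is elementwise constant to replace $P_0^K(f)$ by $f$ exactly, and then bound $\|\widehat{\varphi_{1,h}}\|$ directly via the scaled embedding of Lemma~\ref{lem:PF}.c, $\|\varphi\|_{0,\infty,K}\lesssim h_K^{-1}\|\varphi\|_{0,K}+|\varphi|_{1,K}+h_K|\varphi|_{2,K}$, followed by \eqref{eqn.seminormbound}. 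The paper instead splits $\widehat{\varphi_{1,h}}=(\widehat{\varphi_{1,h}}-\varphi_{1,h})+\varphi_{1,h}$ and quotes $\|P_0^Kf\|_{0,K}\lesssim\|f\|_{0,K}$ together with the approximation estimate $\|\widehat{\varphi_h}-\varphi_h\|_{0,K}\lesssim h_K\|\varphi_h\|_{1,K}$, before invoking the same \eqref{eqn.seminormbound}. The two arguments are essentially equivalent in content (the quoted approximation estimate is itself proved by the same kind of scaled embedding), and your version has the small advantage of making the exactness of the step $(P_0^K(f),\widehat{\varphi_{1,h}})_K=(f,\widehat{\varphi_{1,h}})_K$ explicit; your concern about uniformity of the embedding constant over the polygonal family is correctly resolved by (A1)--(A2) and (P1), which is exactly what the cited Lemma~\ref{lem:PF} delivers. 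No gap.
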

\noindent	{\it Proof of (a).} The symmetry of $a_h(\bullet,\bullet)$, \eqref{eqn.stability}, and the definition of $a^K(\bullet,\bullet)$ imply, for all $\varphi_h,\theta_h \in V_h$,
	\begin{align}
	a_h^K(\varphi_h,\theta_h)\le (a_h^K(\varphi_h,\varphi_h))^{1/2} (a_h^K(\theta_h,\theta_h))^{1/2}&\le \alpha^\ast(a^K(\varphi_h,\varphi_h))^{1/2} (a^K(\theta_h,\theta_h))^{1/2}\\
	&=\alpha^\ast|\varphi_h|_{2,K}|\theta_h|_{2,K}.
	\end{align}
	The sum over all $K \in \T_h$, \Holder inequality, the definition of $|\bullet|_{2,h}$ in \eqref{eqn.norm}, and \eqref{defn.Ah} conclude the proof of $(a)$. \qed
	
	\smallskip
	
\noindent {\it Proof of (b).} The estimate follows from the definition of $A_h(\bullet,\bullet)$ in \eqref{defn.Ah}, the stability property \eqref{eqn.stability}, the definition of $a^K(\bullet,\bullet)$, and \eqref{eqn.norm}.\qed

\smallskip

\noindent {\it Proof of (c).} The definition of $F_h(\bullet)$ in \eqref{defn.Fh}, \eqref{defn.fh}, \Holder inequality, $\|P_0^K f\|_{0,K} \lesssim \|f\|_{0,K}$, $\|\widehat{\varphi_h}-\varphi_h\|_{0,K} \lesssim h_K \|\varphi_h\|_{1,K}$ for $\varphi_h \in V_h$, and \eqref{eqn.seminormbound} concludes the proof of $(c)$. \qed

\medskip

\noindent Since the discrete space $V_h$ is not a subspace of $V$, an {\it enrichment operator ${E}_h$} which maps the nonconforming discrete space to the conforming space plays an important role in establishing the boundedness properties of the discrete trilinear form and hence to derive the local existence and uniqueness of the discrete solution, and a priori error estimates for the solution of \vket. For that, consider the local finite dimensional space \cite[Section 2.2]{Antonietti_CahnHilliard}:
\begin{align}
\widehat{V}_h^c(K):=\Big\{\varphi_h \in H^2(K);\,&\Delta^2\varphi_h \in \P_2(K),\,\varphi_{h}|_{\partial K}\in C^0(\partial K),\, \varphi_{h}|_e \in \P_3(e)\;\forall e \subseteq \partial K,\\
&\nabla \varphi_{h|\partial K}\in C^0(\partial K)^2,\, \frac{\partial \varphi_h}{\partial n_e}\big|_e \in \P_1(e)\;\forall e \subseteq \partial K\Big\}.
\end{align}
For $K \in \T_h$, the $H^2$ conforming local virtual finite element space $V_h^c(K)$  is then defined by
\begin{align}\label{defn.vhck}
V_h^c(K):=\Big\{\varphi_h \in \widehat{V}_h^c(K);\,&(\varphi_h-\Pi^{K,c}\varphi_h,q)_{0,K}=0\,\forall\, q \in \P_2(K)\Big\},
\end{align}
where $\Pi^{K,c}:\widehat{V}_h^c(K) \to \P_2(K)\subseteq \widehat{V}_h^c(K)$ is the projection operator associated with the conforming virtual element, see \cite[Section 2.2]{Antonietti_CahnHilliard} for more details. The global $C^1$ virtual element space is 
\begin{align}\label{defn.vhc}
V_h^c:=\Big\{\varphi_h \in V; \,\varphi_h|_K \in {V}_h^c(K)\; \forall\; K \in \cT_h\Big\}.
\end{align}
\noindent Let ${E}_h:V_h \to V_h^c$ be the enrichment operator. The properties of ${E}_h$ \cite[Propostion 4.1]{Adak_MorleyVEMNSE}, \cite[Lemma 4.2]{JHYY_mediusVEM} that are useful in the analysis are stated in the next lemma.
\begin{lem}[Enrichment operator]\label{lem:Eh}
For all ${\varphi}_h\in V_h$, ${E}_h {\varphi}_h \in V_h^c$ satisfies
\[\sum_{m=0}^{2}h_K^{m-2}|{\varphi}_h-{E}_h{\varphi}_h|_{m,K} \lesssim |{\varphi}_h|_{2,h}.\]
\end{lem}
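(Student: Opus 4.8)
The plan is to prove the enrichment operator estimate
\[
\sum_{m=0}^{2}h_K^{m-2}|{\varphi}_h-{E}_h{\varphi}_h|_{m,K} \lesssim |{\varphi}_h|_{2,h}
\]
by an element-local comparison argument that exploits the fact that the nonconforming space $V_h$ and the conforming space $V_h^c$ share the same degrees of freedom, namely the vertex values and the normal-derivative moments $\int_e \partial_{n_e}\varphi\,\ds$. First I would \emph{define} $E_h$ concretely: given $\varphi_h\in V_h$, one sets the degrees of freedom of $E_h\varphi_h\in V_h^c$ by averaging the (possibly multivalued) nodal values of $\varphi_h$ over the elements sharing each internal vertex, and by transferring the normal-moment values across each edge (using that $\int_e\jump{\partial_{n_e}\varphi_h}\ds=0$ in the definition \eqref{defn.vh} of $V_h$, so these moments are single-valued and the conforming interpolant is well defined). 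Because $\varphi_h$ and $E_h\varphi_h$ agree at all degrees of freedom that are \emph{unambiguous} for $\varphi_h$, the only discrepancy comes from the multivaluedness at vertices, which is controlled by interelement jumps.

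Next I would bound, on each $K$, the difference $\varphi_h-E_h\varphi_h$ by its degrees of freedom. Introduce on $K$ the local nodal interpolation $I_h^c$ onto the conforming space and write
\[
\varphi_h-E_h\varphi_h=(\varphi_h-p)+(p-E_h\varphi_h)
\]
for a suitable common polynomial $p\in\P_2(K)$ (for instance the $L^2$- or $\Pi^K$-projection). Using a scaled trace/inverse inequality on the virtual triangulation guaranteed by property (P1), the seminorms $|p-E_h\varphi_h|_{m,K}$ are estimated by $h_K$-weighted sums of the differences of the shared degrees of freedom of $p$ and $E_h\varphi_h$; those in turn reduce to jumps of $\varphi_h$ across edges, since $E_h\varphi_h$ interpolates the averaged/continuous data. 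The standard scaling is $|{\cdot}|_{m,K}\eqsim h_K^{1-m}$ times an edge-functional, which produces exactly the weights $h_K^{m-2}$ after summation.

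The crucial analytic input is a jump-control estimate: the interelement jumps of $\varphi_h$ and of its gradient, measured in the appropriate broken norms, are bounded by $|\varphi_h|_{2,h}$. This is the nonconforming analogue of a discrete Poincar\'e--Friedrichs inequality, and it follows because the vanishing moment conditions $\int_e\jump{\partial_{n_e}\varphi_h}\ds=0$ and the continuity of $\varphi_h$ at internal vertices force the lowest-order parts of the jumps to vanish, so a Bramble--Hilbert / trace argument bounds the remaining jump by the second-order element seminorm. Summing the local bounds over all $K\in\T_h$ and invoking this jump control yields the global estimate.

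The hard part will be the jump-control step, that is, showing that the tangential and normal jumps of $\varphi_h$ and $\nabla\varphi_h$ across each edge are genuinely of order $h_K\,|\varphi_h|_{2,K}$ in the relevant $L^2(e)$ sense, uniformly over the polygonal (non-simplicial) elements. One must use property (P1) to pass to the virtual shape-regular triangulation, apply the scaled trace inequality there, and then carefully track that the only surviving contribution is the part orthogonal to the matched degrees of freedom, so that a Poincar\'e-type inequality on the edge can be applied. The polygonal geometry and the virtual (non-explicit) nature of the shape functions make the constant-tracking delicate; the rest of the argument is routine scaling and summation.
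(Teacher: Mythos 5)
The paper does not actually prove this lemma: it is quoted verbatim from the references (Proposition~4.1 of the Navier--Stokes Morley-VEM paper and Lemma~4.2 of the medius-analysis VEM paper), so there is no internal proof to match yours against. Your overall strategy --- define $E_h$ by averaging degrees of freedom, reduce the local error $\varphi_h-E_h\varphi_h$ to mismatches of shared data via a scaling argument on the virtual triangulation of (P1), and control those mismatches by interelement jumps which in turn are bounded by $|\varphi_h|_{2,h}$ using the continuity at vertices and the condition $\int_e\jump{\partial\varphi_h/\partial n_e}\ds=0$ --- is exactly the standard route taken in those references, and the identification of the jump-control step as the crux is correct.

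There is, however, one genuine inaccuracy in your setup that would derail the construction if taken literally: the conforming space $V_h^c$ does \emph{not} share the degrees of freedom of $V_h$. The $C^1$ virtual element built from $\widehat{V}_h^c(K)$ (with $\varphi_h|_e\in\P_3(e)$ and $\partial\varphi_h/\partial n_e|_e\in\P_1(e)$) is determined by the values of $\varphi_h$ \emph{and of} $\nabla\varphi_h$ at the vertices, not by edge moments of the normal derivative. Consequently $E_h\varphi_h$ cannot be defined by "transferring the normal-moment values"; one must assign the vertex gradient data by averaging over the elements meeting at each vertex, and it is precisely the multivaluedness of $\nabla\varphi_h$ (not only of $\varphi_h$) that generates the jump terms to be controlled. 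This also raises a technical point you gloss over: for a general $\varphi_h\in H^2(K)$ the pointwise vertex value of $\nabla\varphi_h$ is not defined in two dimensions, so the averaging must be done through computable surrogates (e.g.\ traces of the polynomial edge data or of $\Pi^K\varphi_h$), which is where the cited proofs spend their effort. With the degrees of freedom corrected and the vertex-gradient averaging made precise, the rest of your argument (scaled trace/inverse inequalities on the virtual subtriangulation, Bramble--Hilbert on the edges, summation with the weights $h_K^{m-2}$) goes through as you describe.
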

\begin{lem}[Discrete Sobolev embeddings] \cite[Theorem 4.1]{Adak_MorleyVEMNSE}\label{lem:discreteSobolev}
	For $2\le q <\infty$, any ${\varphi}_h \in V_h$ satisfies $|{\varphi}_h|_{1,q,h}\lesssim |{\varphi}_h|_{2,h}. $
\end{lem}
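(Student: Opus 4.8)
The plan is to prove a discrete Sobolev embedding of the form $|\varphi_h|_{1,q,h}\lesssim|\varphi_h|_{2,h}$ for all $\varphi_h\in V_h$ and all $2\le q<\infty$. The strategy rests on the enrichment operator $E_h$ from Lemma~\ref{lem:Eh}, which bridges the nonconforming space $V_h$ and the conforming subspace $V_h^c\subseteq V=H^2_0(\Omega)$. The key point is that on the conforming side the continuous Sobolev embedding $H^2_0(\Omega)\hookrightarrow W^{1,q}(\Omega)$ holds for every finite $q$ (since $H^1(\Omega)\hookrightarrow L^q(\Omega)$ in two dimensions for all $q<\infty$), whereas on the nonconforming side we only control the piecewise $H^2$ seminorm $|\varphi_h|_{2,h}$. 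The enrichment estimate lets us transfer control from one to the other.

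First I would split $\varphi_h = E_h\varphi_h + (\varphi_h - E_h\varphi_h)$ and estimate the two pieces separately in the broken $W^{1,q}$ seminorm. For the conforming part $E_h\varphi_h\in V_h^c\subseteq H^2_0(\Omega)$, apply the continuous embedding $|E_h\varphi_h|_{1,q}\lesssim |E_h\varphi_h|_2$, followed by a triangle inequality and Lemma~\ref{lem:Eh} to bound $|E_h\varphi_h|_2\lesssim |\varphi_h|_{2,h}+|\varphi_h-E_h\varphi_h|_{2,h}\lesssim |\varphi_h|_{2,h}$. For the remainder $\varphi_h-E_h\varphi_h$, which lives piecewise in $H^2(K)$ but is nonconforming, I would work element by element and invoke a local inverse-type or discrete embedding estimate on each polygon $K$, bounding the local $W^{1,q}(K)$ seminorm of $\varphi_h-E_h\varphi_h$ by an appropriate power of $h_K$ times its $H^2(K)$ seminorm (or $H^1$ seminorm), and then use Lemma~\ref{lem:Eh} again. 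Summing over $K\in\T_h$ and applying a discrete $\ell^q$–$\ell^2$ (or Hölder) comparison, together with the mesh regularity property \textbf{(P1)}, yields the bound in terms of $|\varphi_h|_{2,h}$.

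The main obstacle will be controlling the nonconforming remainder term $\varphi_h-E_h\varphi_h$ in the $L^q$-based norm with the correct, mesh-independent constant. The difficulty is twofold: one must obtain a local $W^{1,q}(K)$ estimate on general star-shaped polygons rather than on a reference triangle, and one must handle the passage from the piecewise estimate on each $K$ to a global bound without losing a factor that depends on the number of elements or on $h_{\max}$. Here the scaled enrichment estimate in Lemma~\ref{lem:Eh}, which already encodes the correct $h_K$-powers in the sum $\sum_{m=0}^2 h_K^{m-2}|\varphi_h-E_h\varphi_h|_{m,K}$, is the crucial tool: the $h_K$ weights are precisely what compensate for the scaling of the $L^q$ norm under the embedding, so that after summation the $h$-dependence cancels. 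Property \textbf{(P1)}, via the uniformly shape-regular virtual subtriangulation $\cT_h^K$, is what makes the local embedding constants uniform across the mesh.

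Since this statement is quoted directly from \cite{Adak_MorleyVEMNSE}, the proof in the present paper will most naturally be by citation; the sketch above indicates the argument underlying that reference, and in a self-contained treatment I would carry out the two-term splitting with the enrichment operator as the organising principle.
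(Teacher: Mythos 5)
The paper offers no proof of this lemma at all: it is quoted verbatim as a citation of \cite[Theorem~4.1]{Adak_MorleyVEMNSE}, so there is no in-paper argument to compare against. Your sketch is nevertheless a faithful reconstruction of the standard argument behind that reference, and it is sound. The splitting $\varphi_h=E_h\varphi_h+(\varphi_h-E_h\varphi_h)$ is exactly the right organising device: the conforming part is handled by $H^2_0(\Omega)\hookrightarrow W^{1,q}(\Omega)$ together with Lemma~\ref{lem:Eh}, and the remainder $w:=\varphi_h-E_h\varphi_h$ by local scaling. Two points deserve to be made precise. First, for $w$ you should use a \emph{scaled Sobolev inequality} $|w|_{1,q,K}\lesssim h_K^{2/q-1}\bigl(|w|_{1,K}+h_K|w|_{2,K}\bigr)$, valid with uniform constant on star-shaped elements by \textbf{(A1)}--\textbf{(A2)} (or via the subtriangulation of \textbf{(P1)}), rather than an inverse estimate in the strict sense, since $w$ is not a polynomial; combined with Lemma~\ref{lem:Eh} this gives $|w|_{1,q,K}\lesssim h_K^{2/q}\epsilon_K$ with $\sum_K\epsilon_K^2\lesssim|\varphi_h|_{2,h}^2$. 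Second, the passage to the global bound is cleanest via the embedding $\ell^2\hookrightarrow\ell^q$ for $q\ge 2$, i.e.\ $\bigl(\sum_K|w|_{1,q,K}^q\bigr)^{1/q}\le\bigl(\sum_K|w|_{1,q,K}^2\bigr)^{1/2}\lesssim h_{\max}^{2/q}|\varphi_h|_{2,h}$, where the nonnegative power of $h_{\max}$ is harmless on a bounded domain; this is exactly the ``$\ell^q$--$\ell^2$ comparison'' you allude to, and it is where the $h_K$-weights in Lemma~\ref{lem:Eh} cancel the $L^q$ scaling as you anticipated. The hidden constant then depends on $q$, which is consistent with the statement.
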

\noindent Recall the definition of $\Pi^K$ from \eqref{eqn.projectionPiK}. Define $\Pi^h$ in $L^2(\O)$ as, for all $v \in H^2(\cT_h)$,
\begin{equation}\label{eqn.Pih}
(\Pi^h v)|_K:=\Pi^K v,\quad \forall\;K \in \cT_h.
\end{equation}
\noindent For $\varphi \in V_h$, a choice of $q=\Pi^K\varphi \in \P_2(K)$ in \eqref{eqn.projectionPiK1} leads to $a^K(\Pi^K \varphi,\Pi^K \varphi)=a^K(\varphi,\Pi^K \varphi)$. The definition of $a^K(\bullet,\bullet)$ and Cauchy inequality imply $|\Pi^K \varphi|_{2,K} \le |\varphi|_{2,K}.$ Consequently,
\begin{equation}\label{eqn.Pihbound}
|\Pi^h \varphi|_{2,h} \le |\varphi|_{2,h}.
\end{equation}
\begin{lem}[\Poincare-Freidrich inequality and inverse estimates]\cite{Brenner_PFH2},\cite[Lemma 3.2]{JHYY_mediusVEM},\cite[(2.8)]{Brenner_errorVEM} \label{lem:PF}
Any $\varphi\in H^2(K)$ satisfies
	\begin{itemize}
		\item[(a)]$\displaystyle h_K^{-2}\|\varphi\|_{0,K}\lesssim |\varphi|_{2,K}+h_K^{-2}\left|\int_{\partial K}\varphi\ds\right|+h_K^{-1}\left|\int_{\partial K}\nabla \varphi \ds\right|$,
	\item[(b)]$|\varphi|_{1,K} \lesssim h_K|\varphi|_{2,K}+h_K^{-1}\|\varphi\|_{0,K},$	
	\item[(c)]$\|\varphi\|_{0,\infty,K} \lesssim h_K^{-1}\|\varphi\|_{0,K}+|\varphi|_{1,K}+h_K|\varphi|_{2,K}.$
\end{itemize}
\end{lem}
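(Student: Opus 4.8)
The plan is to treat all three bounds as local scaling estimates: prove each on a reference configuration of unit diameter and then transport it to $K$ by the dilation $x = h_K\hat{x}$ (after translating so that $K$ contains the ball furnished by (A1)), tracking the powers of $h_K$ produced by the change of variables. Writing $\hat\varphi(\hat x)=\varphi(x)$, the standard relations $\|\varphi\|_{0,K}=h_K\|\hat\varphi\|_{0,\hat K}$, $|\varphi|_{1,K}=|\hat\varphi|_{1,\hat K}$, $|\varphi|_{2,K}=h_K^{-1}|\hat\varphi|_{2,\hat K}$, together with $\int_{\partial K}\varphi\ds=h_K\int_{\partial\hat K}\hat\varphi\,\mathrm d\hat s$ and $\int_{\partial K}\nabla\varphi\ds=\int_{\partial\hat K}\hat\nabla\hat\varphi\,\mathrm d\hat s$, convert each reference inequality into the asserted one after multiplying through by the appropriate power of $h_K$.

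For (a), the reference inequality is $\|\hat\varphi\|_{0,\hat K}\lesssim|\hat\varphi|_{2,\hat K}+\bigl|\int_{\partial\hat K}\hat\varphi\,\mathrm d\hat s\bigr|+\bigl|\int_{\partial\hat K}\hat\nabla\hat\varphi\,\mathrm d\hat s\bigr|$. This is a generalized \Poincare{} inequality: the right-hand side is a seminorm on $H^2(\hat K)$ whose only null elements are the polynomials $p\in\P_1(\hat K)$ annihilated by the two boundary functionals. These functionals are $\P_1(\hat K)$-unisolvent, as a direct check shows: for linear $p$, $\hat\nabla p$ is constant, so $\int_{\partial\hat K}\hat\nabla p=0$ forces $\hat\nabla p=0$, whence $p$ is constant and $\int_{\partial\hat K}p=0$ forces $p=0$. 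The Deny--Lions/Peetre--Tartar lemma then yields the reference bound, and scaling produces (a) with its $h_K^{-2}$ and $h_K^{-1}$ weights.

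For (b) and (c) the reference statements are classical. For (b) one needs the interpolation inequality $|\hat\varphi|_{1,\hat K}\lesssim|\hat\varphi|_{2,\hat K}+\|\hat\varphi\|_{0,\hat K}$, proved by a compactness contradiction: a sequence with $|\hat\varphi_n|_{1,\hat K}=1$ and $|\hat\varphi_n|_{2,\hat K}+\|\hat\varphi_n\|_{0,\hat K}\to0$ is bounded in $H^2(\hat K)$, hence converges in $H^1(\hat K)$ to an element with vanishing $H^2$-seminorm and $L^2$-norm, i.e. to $0$, contradicting $|\cdot|_{1,\hat K}=1$; scaling then reproduces the weights $h_K$ and $h_K^{-1}$. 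For (c) one uses the two-dimensional Sobolev embedding $H^2(\hat K)\hookrightarrow C^0(\hat K)$, giving $\|\hat\varphi\|_{0,\infty,\hat K}\lesssim\|\hat\varphi\|_{0,\hat K}+|\hat\varphi|_{1,\hat K}+|\hat\varphi|_{2,\hat K}$; since the $L^\infty$-norm is invariant under the dilation, scaling gives (c) directly.

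The main obstacle is not any single inequality but the uniformity of the implied constants over the whole admissible family of polygons: unlike the simplex case there is no single reference element, and in (a) the functionals on the right live on $\partial K$, so one cannot simply split $K$ along the virtual triangulation of (P1) and sum, since that would introduce interior-edge contributions absent from the statement. The clean remedy is to run the Deny--Lions argument on the normalized polygons $K/h_K$, whose geometry is confined by (A1)--(A2) to a class on which the \Poincare{} constant depends continuously and is therefore uniformly bounded, with the requisite trace control for the boundary terms supplied by (P1). This is precisely the route taken in the cited references \cite{Brenner_PFH2,JHYY_mediusVEM,Brenner_errorVEM}, which we invoke.
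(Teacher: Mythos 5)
The paper gives no proof of this lemma at all --- it is quoted verbatim from the cited references --- and your scaling-plus-generalized-Poincar\'e argument is exactly the standard route those references take: the scaling exponents, the unisolvence check for the two boundary functionals in (a), the compactness arguments for (b) and (c), and the transport back to $K$ all check out. You also correctly isolate the one genuinely delicate point, namely the uniformity of the constants over the admissible polygon class under ({\bf A1})--({\bf A2}), and delegate it to the same references the paper cites, so there is nothing to compare beyond that.
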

\begin{lem}[Projection error]\label{lem:vh-PiKvh}
Any $K \in \cT_h$ and $\varphi_h \in V_h$ satisfy
\noindent	\[\|\varphi_h-\Pi^K \varphi_h\|_{0,K}+h_K|\varphi_h-\Pi^K \varphi_h|_{1,K} \lesssim h_K^{2}|\varphi_h-\Pi^K \varphi_h|_{2,K}.\]
\end{lem}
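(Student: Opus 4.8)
The plan is to prove the projection error estimate in Lemma~\ref{lem:vh-PiKvh} by applying the \Poincare-type inequality of Lemma~\ref{lem:PF}(a) to the function $\psi:=\varphi_h-\Pi^K\varphi_h$, and then exploiting the defining properties of the projection operator $\Pi^K$ from \eqref{eqn.projectionPiK} to annihilate the two boundary correction terms. The key observation is that $\psi$ has been engineered precisely so that its zero-order and first-order boundary moments vanish: the constraint \eqref{eqn.projectionPiK3} gives $\int_{\partial K}\nabla\Pi^K\varphi_h\ds=\int_{\partial K}\nabla\varphi_h\ds$, so that $\int_{\partial K}\nabla\psi\ds=0$ and the third term on the right of Lemma~\ref{lem:PF}(a) drops out entirely.

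\emph{First} I would handle the remaining boundary term $h_K^{-2}\left|\int_{\partial K}\psi\ds\right|$. Here one must be slightly careful, since the raw projection condition \eqref{eqn.projectionPiK2} only matches the \emph{averaged vertex values} $\widehat{\Pi^K\varphi_h}=\widehat{\varphi_h}$ rather than the full edge integral. However, $\varphi_h$ lives in the \emph{modified} space $V_h(K)$ defined in \eqref{def.vhk}, whose very defining property is $\int_e\Pi^K\varphi_h\ds=\int_e\varphi_h\ds$ for every edge $e\subseteq\partial K$. Summing over the edges gives $\int_{\partial K}\psi\ds=0$, so the first boundary term vanishes as well. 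This is the step where the restriction $\varphi_h\in V_h$ (rather than merely $\varphi_h\in\widetilde V_h(K)$) is genuinely used.

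\emph{With both boundary contributions eliminated}, Lemma~\ref{lem:PF}(a) reduces to $h_K^{-2}\|\psi\|_{0,K}\lesssim|\psi|_{2,K}$, which is exactly the $m=0$ half of the claim. For the first-order term I would then invoke Lemma~\ref{lem:PF}(b), $|\psi|_{1,K}\lesssim h_K|\psi|_{2,K}+h_K^{-1}\|\psi\|_{0,K}$, and substitute the bound on $\|\psi\|_{0,K}$ just obtained; the second summand becomes $h_K^{-1}\cdot h_K^2|\psi|_{2,K}=h_K|\psi|_{2,K}$, matching the first, so that $h_K|\psi|_{1,K}\lesssim h_K^2|\psi|_{2,K}$. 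Adding the two estimates gives the stated inequality.

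\emph{The main obstacle} is conceptual rather than computational: recognizing that the three defining relations of $\Pi^K$ are each tailored to kill exactly one term in the \Poincare{} inequality, and in particular noticing that one needs the strengthened membership $\varphi_h\in V_h(K)$ (via the edge-integral matching in \eqref{def.vhk}) to dispose of the zero-order boundary moment---the weaker vertex-average condition \eqref{eqn.projectionPiK2} alone would not suffice. Everything downstream is a mechanical substitution with the implied constants absorbing the shape-regularity constants from (\textbf{A1})--(\textbf{A2}).
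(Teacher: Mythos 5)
Your proof is correct and follows exactly the paper's argument: apply Lemma~\ref{lem:PF}(a) to $\varphi_h-\Pi^K\varphi_h$, kill the two boundary terms via the edge-integral matching in \eqref{def.vhk} and the gradient condition \eqref{eqn.projectionPiK3}, then conclude the $H^1$ bound from Lemma~\ref{lem:PF}(b). No differences worth noting.
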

\begin{proof}
\noindent Since $\varphi_h \in V_h$, the choice $\varphi=\varphi_h-\Pi^K \varphi_h$ in Lemma~\ref{lem:PF}.a, \eqref{def.vhk}, and \eqref{eqn.projectionPiK3} lead to $h_K^{-2}\|\varphi_h-\Pi^K \varphi_h\|_{0,K} \lesssim | \varphi_h-\Pi^K \varphi_h|_{2,K}$. A combination of this and Lemma~\ref{lem:PF}.b shows $|\varphi_h-\Pi^K \varphi_h|_{1,K} \lesssim h_K|\varphi_h-\Pi^K \varphi_h|_{2,K}$.
\end{proof}
\noindent 
Recall the definition of $B_h(\bullet,\bullet,\bullet)$ and $\hB_h(\bullet,\bullet,\bullet)$ from \eqref{defn.Bh} and \eqref{defn.Bhhat}, and the index of elliptic regularity $\alpha \in (\half,1]$.
\begin{lem}[Bounds for $\hB_h(\bullet,\bullet,\bullet)$]\label{lem:boundednessBh}Any $\bxi \in \bv\cap \bH^{2+\alpha}(\O)$, $\Phi,\Theta \in \bv$, and $\Phi_h,\Theta_h, \bxi_h \in \bv_h$ satisfy
	\begin{itemize}
		\item[(a)]$B_h(\Phi_h,\Theta_h,\bxi_h)\lesssim |\Phi_h|_{2,h}|\Theta_h|_{2,h}|\bxi_h|_{2,h},$
					\item[(b)] $\hB_h(\Theta+\Theta_h,\Phi+\Phi_h,\bxi+\bxi_h) \lesssim |\Theta+\Pi^h\Theta_h|_{2,h}|\Phi+\Pi^h\Phi_h|_{1,4,h}|\bxi+\Pi^h\bxi_h|_{1,4,h},$	
		\item[(c)] $\hB_h(\bxi,\Theta,\Phi+\Phi_h) \lesssim \|\bxi\|_{2+\alpha}|\Theta|_{2}|\Phi+\Pi^h\Phi|_{1,h},$
		\item[(d)] $\hB_h(\bxi,\Theta_h,\Phi+\Phi_h) \lesssim \|\bxi\|_{2+\alpha}|\Theta_h|_{2,h}|\Phi+\Pi^h\Phi|_{1,h},$
		\item[(e)] $\hB_h(\bxi,\Phi+\Phi_h,\Theta+\Theta_h) \lesssim \|\bxi\|_{2+\alpha}|\Phi+\Pi^h\Phi_h|_{1,h}(|\Theta|_{2}+|\Theta_h|_{2,h}),$
		\item[(f)] $\hB_h(\Theta+\Theta_h,\bxi,\Phi+\Phi_h) \lesssim |\Theta+\Pi^h\Theta_h|_{2,h}\|\bxi\|_{2+\alpha}|\Phi+\Pi^h\Phi_h|_{1,h}.$		
	\end{itemize}
\end{lem}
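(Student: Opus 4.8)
The plan is to prove each bound by exploiting the trilinear structure of $\hB_h$, which reduces everything to estimating the elementary form $\widehat{b}_h^K(\cdot,\cdot,\cdot)$ on a single polygon $K$. Recall from \eqref{defn.bhkhat} that $\widehat{b}_h^K(\varphi+\varphi_h,\psi+\psi_h,\theta+\theta_h)=\int_K \cof(D^2(\varphi+\Pi^K\varphi_h))\nabla(\psi+\Pi^K\psi_h)\cdot\nabla(\theta+\Pi^K\theta_h)\dx$. Since each entry of the cofactor matrix $\cof(D^2\cdot)$ is a second-order partial derivative, the integrand is a product of one second-derivative factor (from the first slot) and two first-derivative factors (from the second and third slots). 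The master estimate I would establish first is a local generalized H\"older bound: $\widehat{b}_h^K(\cdot,\cdot,\cdot)\lesssim |\varphi+\Pi^K\varphi_h|_{2,K}\,|\psi+\Pi^K\psi_h|_{1,4,K}\,|\theta+\Pi^K\theta_h|_{1,4,K}$, obtained by applying the three-function H\"older inequality with exponents $(2,4,4)$ to the pointwise product $|D^2(\cdot)|\,|\nabla(\cdot)|\,|\nabla(\cdot)|$. Summing over $K\in\T_h$ with a discrete Cauchy--Schwarz gives the global estimate in $L^2$--$L^4$--$L^4$ form, which is exactly part (b) after unpacking $\hB_h$ into its three $\widehat{b}_h$ summands via \eqref{defn.Bhhat}.

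With (b) in hand the remaining parts follow by choosing how to distribute regularity among the three factors. For (a), I would first note that on $V_h$ the extended form reduces to $b_h$ (the polynomial projections $\Pi^h$ act on discrete functions only), then apply (b) with all continuous parts zero, and use Lemma~\ref{lem:discreteSobolev} to convert each $|\cdot|_{1,4,h}$ factor into $|\cdot|_{2,h}$, together with \eqref{eqn.Pihbound} to absorb the projection. Parts (c)--(f) all involve one slot carrying the smooth function $\bxi\in\bH^{2+\alpha}(\O)$, and the strategy is to place $\bxi$ in whichever of the three H\"older factors lets me use its extra regularity. The key analytic input is the Sobolev embedding $H^{2+\alpha}(\O)\hookrightarrow W^{1,\infty}(\O)$ for $\alpha>1/2$ (indeed $H^{2+\alpha}\hookrightarrow C^{1}$), so that whenever $\bxi$ sits in a gradient slot I can bound that factor by $\|\bxi\|_{2+\alpha}$ in $L^\infty$ and then pair the two remaining factors as $L^2$--$L^2$ rather than $L^4$--$L^4$; this is what downgrades the two $|\cdot|_{1,4,h}$ factors in (b) to the single $|\cdot|_{1,h}$ factors appearing in (c)--(f). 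When $\bxi$ occupies the Hessian slot (as in (f)) I instead bound $|D^2\bxi|$ in $L^2$ via $\|\bxi\|_{2+\alpha}$ and keep one remaining factor in $L^\infty$ through the embedding applied to the smooth argument; the bookkeeping of which factor gets the $|\cdot|_2$ versus $|\cdot|_{2,h}$ norm is dictated by whether that slot holds a continuous function $\Theta$ or a discrete one $\Theta_h$.

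The main obstacle, and where the virtual-element nonconformity really enters, is the treatment of the discrete parts $\Phi_h,\Theta_h,\bxi_h$ through their projections $\Pi^K$. Because $\widehat{b}_h^K$ evaluates the discrete arguments only via $\Pi^K(\cdot)$, every estimate must be phrased in terms of the projected quantities $\varphi+\Pi^K\varphi_h$, and I must control these uniformly in $h$. The stability bound \eqref{eqn.Pihbound}, $|\Pi^h\varphi|_{2,h}\le|\varphi|_{2,h}$, handles the energy norm, but for the $W^{1,4}$ factors I would need a parallel $L^4$-type stability of the projection, which comes from combining the projection-error Lemma~\ref{lem:vh-PiKvh}, the polynomial $W^{1,4}$ approximation in Lemma~\ref{lem.vpi}, the inverse-type estimates of Lemma~\ref{lem:PF}, and the discrete embedding Lemma~\ref{lem:discreteSobolev}. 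Keeping the mesh-dependent powers of $h_K$ balanced so that no negative power survives after summation is the delicate point; everything else is a routine application of H\"older's inequality and the stated embeddings.
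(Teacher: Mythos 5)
Your architecture for (a), (b) and (f) matches the paper's proof: a generalized H\"older bound with exponents $(2,4,4)$ on each $\widehat{b}_h^K$ gives (b) directly, (a) follows by combining it with Lemma~\ref{lem:discreteSobolev} (for $q=4$) and \eqref{eqn.Pihbound}, and for (f), where $\bxi$ occupies the second (gradient) slot, the paper uses exactly your $(2,\infty,2)$ pairing via $H^{2+\alpha}(\O)\hookrightarrow W^{1,\infty}(\O)$.

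There is, however, a genuine gap in your treatment of (c)--(e). You have the slots inverted: in (c), (d) and (e) the smooth function $\bxi$ sits in the \emph{first} (Hessian/cofactor) argument, not in a gradient slot, and in (f) it sits in a gradient slot. For the Hessian-slot cases you propose to put $D^2\bxi$ in $L^2$ and one of the remaining gradient factors in $L^\infty$ ``through the embedding applied to the smooth argument'' --- but in (c)--(e) the only smooth argument is $\bxi$ itself; the other two arguments are merely $H^2$ or discrete, and $H^2(\O)\not\hookrightarrow W^{1,\infty}(\O)$ in two dimensions, so no $L^\infty$ control of their gradients is available. The ingredient you are missing is the embedding $H^{2+\alpha}(\O)\hookrightarrow W^{2,4}(\O)$ for $\alpha>1/2$: the paper places $D^2\bxi$ in $L^4$, one gradient factor in $L^4$ (via $H^2(\O)\hookrightarrow W^{1,4}(\O)$ or Lemma~\ref{lem:discreteSobolev}), and the remaining gradient factor in $L^2$, i.e.\ a $(4,4,2)$ H\"older split. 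It is this relaxation of the Hessian factor from $L^2$ to $L^4$ --- not the $W^{1,\infty}$ bound on $\nabla\bxi$ --- that downgrades one $|\cdot|_{1,4,h}$ factor of (b) to the $|\cdot|_{1,h}$ factor appearing in (c)--(e). With that correction the case analysis closes; your remarks on the $L^4$-stability of $\Pi^h$ for the discrete arguments are a legitimate concern shared (implicitly) by the paper's own proof.
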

\noindent	{\it Proof of (a).} For $\varphi_h,\theta_h,\xi_h \in V_h$, the definition of $b_h(\bullet,\bullet,\bullet)$ in \eqref{defn.bh} and a \Holder inequality show
	\[b_h(\varphi_h,\theta_h,\xi_h)\lesssim |\Pi^h \varphi_h|_{2,h}|\Pi^h \theta_h|_{1,4,h}|\Pi^h \xi_h|_{1,4,h}.\]
\noindent Lemma~\ref{lem:discreteSobolev} for $q=4$ and \eqref{eqn.Pihbound} read $b_h(\varphi_h,\theta_h,\xi_h)\lesssim | \varphi_h|_{2,h}| \theta_h|_{2,h}|\xi_h|_{2,h}$. This and the definition of $B_h(\bullet,\bullet,\bullet)$ in \eqref{defn.Bh} concludes the proof of $(a)$. \qed

\medskip

\noindent {\it Proof of (b).} The result follows from the definition of $\widehat{b}_h(\bullet,\bullet,\bullet)$ in \eqref{defn.bhhat}, a \Holder inequality (same as in $(a)$), and \eqref{defn.Bhhat}. \qed

\medskip

\noindent {\it Proof of (c).} The definition of $\widehat{b}_h(\bullet,\bullet,\bullet)$ in \eqref{defn.bhhat} and a \Holder inequality lead to, for all $\xi \in V\cap H^{2+\alpha}(\O)$, $\varphi,\theta \in V$ and $\varphi_h \in V_h$,
\begin{equation}
\widehat{b}_h(\xi,\theta,\varphi+\varphi_h)\lesssim |\xi|_{2,4}|\theta|_{1,4}|\varphi+\Pi^h\varphi_h|_{1,h}.
\end{equation}
The Sobolev embeddings $H^{2+\alpha}(\O) \hookrightarrow W^{2,4}(\O)$ for $\alpha>1/2$ and $H^2(\O) \hookrightarrow W^{1,4}(\O)$ \cite{brezis}, and \eqref{defn.Bhhat} concludes the proof of $(c)$. \qed

\medskip

\noindent {\it Proof of (d).} A \Holder inequality reveals, for all $\xi \in V\cap H^{2+\alpha}(\O)$, $\varphi \in V$ and $\theta_h,\varphi_h \in V_h$,
\begin{equation}
\widehat{b}_h(\xi,\theta_h,\varphi+\varphi_h)\lesssim |\xi|_{2,4}|\Pi^h\theta_h|_{1,4,h}|\varphi+\Pi^h\varphi_h|_{1,h}.
\end{equation}
The Sobolev embedding $H^{2+\alpha}(\O) \hookrightarrow W^{2,4}(\O)$, Lemma~\ref{lem:discreteSobolev}, \eqref{eqn.Pihbound}, and \eqref{defn.Bhhat} proves the assertion of $(d)$. \qed

\medskip

\noindent {\it Proof of (e).} The \Holder inequality, the Sobolev embeddings $H^{2+\alpha}(\O) \hookrightarrow W^{2,4}(\O)$, $H^2(\O) \hookrightarrow W^{1,4}(\O)$, and Lemma~\ref{lem:discreteSobolev} provide
\begin{align}
\widehat{b}_h(\xi,\varphi+\varphi_h,\theta+\theta_h)&\lesssim |\xi|_{2,4}|\varphi+\Pi^h\varphi_h|_{1,h}(|\theta|_{1,4}+|\Pi^h\theta_h|_{1,4,h})\\
&\lesssim \|\xi\|_{2+\alpha}|\varphi+\Pi^h\varphi_h|_{1,h}(|\theta|_{2}+|\Pi^h\theta_h|_{2,h}).
\end{align}
This, \eqref{eqn.Pihbound}, and \eqref{defn.Bhhat} concludes the proof of $(e)$.\qed
\medskip

\noindent {\it Proof of (f).} The \Holder inequality and the Sobolev embedding $H^{2+\alpha}(\O) \hookrightarrow W^{1,\infty}(\O)$ \cite{brezis} show
\begin{align}\label{eqn.bhhatd}
\widehat{b}_h(\theta+\theta_h,\xi,\varphi+\varphi_h)&\lesssim |\theta+\Pi^h\theta_h|_{2,h}|\xi|_{1,\infty}|\varphi+\Pi^h\varphi_h|_{1,h}\nonumber\\
&\lesssim |\theta+\Pi^h\theta_h|_{2,h}\|\xi\|_{2+\alpha}|\varphi+\Pi^h\varphi_h|_{1,h}.
\end{align}
A combination of \eqref{eqn.bhhatd} and \eqref{defn.Bhhat} implies the assertion. \qed
\medskip

\noindent Define, for all $\Theta_h=(\theta_{1,h},\theta_{2,h})$, and $\Phi=(\varphi_{1,h},\varphi_{2,h})\in  \bv+\bv_h$,
\begin{equation}\label{defn.Apw}
A_{\pw}(\Theta_h,\Phi_h):=\sum_{K \in \cT_h} (a^K(\theta_{1,h},\varphi_{1,h}) + a^K(\theta_{2,h},\varphi_{2,h})).
\end{equation}
\noindent Define $\varphi_\pi^h \in \P_2(\cT_h)$ and $\varphi_I^h \in V_h$ by $$\varphi_\pi^h|_K :=\varphi_\pi^K \quad \mbox{ and }\quad \varphi_I^h|_K :=\varphi_I^K$$ for all $K \in \cT_h$, where $\varphi_\pi^K$ and $\varphi_I^K$ are as in Lemma~\ref{lem.vpi} and~\ref{lem.vI}.
\begin{lem}[Bounds for $A_{\pw}(\bullet,\bullet)$]\cite{Adak_Morleywinddriven},\cite[Lemmas 4.2, 4.3]{ScbSungZhang} \label{lem:Apw}
Any $\bxi \in \bH^{2+\alpha}(\O)$ for $\alpha \in (\half,1]$, $\Phi \in \bv \cap  \bH^{2+\alpha}(\O)$, and $\Phi_h \in \bv_h$ satisfy
	\begin{itemize}
		\item[(a)]
$A_{\pw}(\bxi,E_h\Phi_h-\Phi_h)\lesssim h_{\max}^{\alpha}\|\bxi\|_{2+\alpha}|\Phi_h|_{2,h},$
\item[(b)]
$A_{\pw}(\bxi,\Phi-\Phi_I^h)\lesssim h_{\max}^{2\alpha}\|\bxi\|_{2+\alpha}\|\Phi\|_{2+\alpha}$, where $\Phi_I^h \in \bV_h$ is the interpolant of $\Phi$.
	\end{itemize}
\end{lem}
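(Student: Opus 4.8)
The plan is to reduce both statements to a single scalar consistency estimate. Since $A_{\pw}(\bullet,\bullet)$ acts componentwise and the norms $\|\bullet\|_{2+\alpha}$ and $|\bullet|_{2,h}$ split across the two components of $\bxi$ and of the test functions, it suffices to bound the scalar quantity $\sum_{K\in\cT_h}\int_K D^2\xi:D^2 w\dx$, where $\xi\in H^{2+\alpha}(\O)$ is a component of $\bxi$ and $w$ is the corresponding nonconforming remainder: $w=E_h\varphi_h-\varphi_h$ in case (a) and $w=\varphi-\varphi_I^h$ in case (b). In both cases $w$ satisfies the crucial orthogonality $\int_e\jump{\partial w/\partial n_e}\ds=0$ on every edge $e\in\cE_h$: in (a) because $E_h\varphi_h\in V_h^c\subseteq V$ is conforming while $\varphi_h\in V_h$ obeys the moment condition in \eqref{defn.vh}, and in (b) because $\varphi\in V$ has no normal-derivative jump while $\varphi_I^h$ reproduces, edge by edge, the normal-derivative moments of $\varphi$. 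Together with the vertex continuity built into $V_h$, this is the property that will make the leading edge contribution vanish.

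The common device is to subtract, on each $K$, the piecewise quadratic $\xi_\pi^K\in\P_2(K)$ from Lemma~\ref{lem.vpi}, so that $D^2\xi_\pi^K$ is a constant matrix, and to split
\[
\sum_{K\in\cT_h}\int_K D^2\xi:D^2 w\dx=\sum_{K\in\cT_h}\int_K D^2(\xi-\xi_\pi^K):D^2 w\dx+\sum_{K\in\cT_h}\int_K D^2\xi_\pi^K:D^2 w\dx.
\]
For the first (volume) sum I would apply a Cauchy--Schwarz inequality elementwise together with Lemma~\ref{lem.vpi} at $s=2+\alpha$, giving $|\xi-\xi_\pi^K|_{2,K}\lesssim h_K^{\alpha}|\xi|_{2+\alpha,K}$. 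In case (a) the remaining factor $|w|_{2,h}=|E_h\varphi_h-\varphi_h|_{2,h}$ is controlled by Lemma~\ref{lem:Eh}, producing $h_{\max}^{\alpha}\|\xi\|_{2+\alpha}|\varphi_h|_{2,h}$; in case (b) one uses instead Lemma~\ref{lem.vI} at $s=2+\alpha$, so that $|w|_{2,K}=|\varphi-\varphi_I^K|_{2,K}\lesssim h_K^{\alpha}|\varphi|_{2+\alpha,K}$ and the volume sum already reaches $h_{\max}^{2\alpha}\|\xi\|_{2+\alpha}\|\varphi\|_{2+\alpha}$.

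The crux is the second sum, which carries the nonconformity. As $D^2\xi_\pi^K$ is constant on $K$, integration by parts rewrites each summand as the boundary integral $\int_{\partial K}(D^2\xi_\pi^K\nabla w)\cdot n_K\ds$, and reassembling over $\cT_h$ turns this into a sum over edges of jump terms in $\nabla w$. On each edge I would replace the two one-sided constant matrices $D^2\xi_\pi^{K_\pm}$ by a single edge-based constant (for instance an edge mean of $D^2\xi$); this makes the coefficient single-valued across the edge, so that the orthogonality $\int_e\jump{\partial w/\partial n_e}\ds=0$ and the vertex continuity of $w$ annihilate the leading term. The remaining difference is bounded by a trace inequality applied to $\nabla w$ and the approximation of $D^2\xi$ by a constant, $\|D^2\xi-\overline{D^2\xi}\|_{0,K}\lesssim h_K^{\alpha}|\xi|_{2+\alpha,K}$ (a \Poincare-type bound, cf.\ Lemma~\ref{lem:PF}), which supplies the extra power(s) of $h_{\max}$; matching the surviving $\nabla w$ factor against $|\varphi_h|_{2,h}$ through Lemma~\ref{lem:Eh} in (a), or against the interpolation error through Lemma~\ref{lem.vI} in (b), closes the estimate.

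I expect this edge/jump analysis to be the main obstacle. The delicate points are the correct pairing of the per-element constant Hessians across each shared edge, the insertion of an edge-wise constant so that the moment condition $\int_e\jump{\partial w/\partial n_e}\ds=0$ can actually be invoked, and the separate treatment of the tangential part of $\nabla w$ (handled through vertex continuity), all while recovering the sharp power $h_{\max}^{\alpha}$ in (a) and $h_{\max}^{2\alpha}$ in (b). The volume sum, by contrast, is routine. These are precisely the nonconforming-consistency bounds worked out in \cite{Adak_Morleywinddriven} and \cite[Lemmas 4.2, 4.3]{ScbSungZhang}, whose arguments I would follow to complete the edge estimates.
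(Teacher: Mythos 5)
Your outline is correct and is essentially the standard nonconforming-consistency argument: the paper itself gives no proof of this lemma but delegates it to \cite{Adak_Morleywinddriven} and \cite[Lemmas 4.2, 4.3]{ScbSungZhang}, and your decomposition (subtract the piecewise quadratic $\xi_\pi^K$ so the Hessian is elementwise constant, integrate by parts, insert an edge-wise constant so that the moment condition $\int_e\jump{\partial w/\partial n_e}\ds=0$ and vertex continuity kill the leading jump term, and bound the remainder by trace plus fractional approximation) is exactly the mechanism those references use, with the correct power counting ($h_{\max}^{\alpha}$ from the $\bxi$-approximation alone in (a), and an extra $h_{\max}^{\alpha}$ from Lemma~\ref{lem.vI} in (b)). The only point to make fully rigorous is the edge estimate for the fractional-regularity coefficient, where $\alpha>1/2$ is needed for the trace of $D^2\xi$ to make sense; your sketch already flags this step and it goes through as you describe.
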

\begin{rem}[consequences of Lemma~\ref{lem.vI},~\ref{lem.vpi}, and~\ref{lem:Eh}]\label{remark.consequence}
	The estimates in Lemma~\ref{lem.vI},~\ref{lem.vpi}, and~\ref{lem:Eh}  give rise some typical estimates utilised throughout the analysis in this paper. For $\varphi \in V \cap H^{2+\alpha}(\O)$, a triangle inequality with $\varphi$, Lemma~\ref{lem.vI}, and Lemma~\ref{lem.vpi} show
	\begin{equation}\label{eqn.vIvpi}
	\|\varphi_I^h-\varphi_\pi^h\|_{2,h} \le 	\|\varphi-\varphi_I^h\|_{2,h}+	\|\varphi-\varphi_\pi^h\|_{2,h} \lesssim h_{\max}^{\alpha}\|\varphi\|_{2+\alpha}.
	\end{equation}
	For $\varphi_h \in V_h$, triangle inequality with $\varphi_h$, Lemma~\ref{lem:Eh},~\ref{lem:vh-PiKvh}, and \eqref{eqn.Pihbound} provide
	\begin{equation}\label{eqn.Eh}
	|E_h\varphi_h-\Pi^h\varphi_h|_{1,h} \le  	|E_h\varphi_h-\varphi_h|_{1,h}+	|\varphi_h-\Pi^h\varphi_h|_{1,h}\lesssim h_{\max}|\varphi_h|_{2,h}.
	\end{equation}
Analog arguments lead to $	|E_h\varphi_h-\Pi^h\varphi_h|_{2,h}\lesssim |\varphi_h|_{2,h}$. Lemma~\ref{lem:vh-PiKvh}, $\Pi^h\varphi_\pi^h=\varphi_\pi^h$, and \eqref{eqn.Pihbound} read
\begin{align}
|\varphi_I^h-\Pi^h\varphi_I^h |_{1,h} &\lesssim h_{\max}|\varphi_I^h-\Pi^h\varphi_I^h |_{2,h} \le h_{\max}(|\varphi_I^h-\varphi_\pi^h|_{2,h}+|\Pi^h(\varphi_\pi^h-\varphi_I^h) |_{2,h})\\
&\lesssim h_{\max}|\varphi_I^h-\varphi_\pi^h |_{2,h}\lesssim h_{\max}^{1+\alpha}\|\varphi\|_{2+\alpha}\label{t3}
\end{align}
with \eqref{eqn.vIvpi} in the last step. This and Lemma~\ref{lem.vI} lead to
\begin{equation}\label{eqn.xiPixiIh1h}
|\varphi-\Pi^h\varphi_I^h |_{1,h}\lesssim  h_{\max}^{1+\alpha}\|\varphi\|_{2+\alpha}\; \mbox{ and }\; |\varphi-\Pi^h\varphi_I^h |_{2,h}\lesssim  h_{\max}^{\alpha}\|\varphi\|_{2+\alpha}.
\end{equation}
\end{rem}
\subsection{Well-posedness of the discrete problem}\label{sec:wellposedness_subsec}
\noindent This section establishes the well-posedness of the discrete linearised problem \eqref{eqn.discretelinearised}. 
\begin{thm}[Well-posedness of discrete linearised problem]\label{thm.discretelinearised}
	Let $\Psi \in \bv \cap \bH^{2+\alpha}(\O)$ be a regular solution to \eqref{VKE_Morleyweak}. Then for sufficiently small $h_{\max}$, the discrete linearised problem \eqref{eqn.discretelinearised} is well-posed.
\end{thm}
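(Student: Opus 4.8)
The plan is to reduce the claim to a discrete inf--sup condition and to transfer the continuous inf--sup \eqref{inf-sup} to $\cA_h$ by a Schatz-type argument, in which the coercivity of $A_h$ (Lemma~\ref{lem:Ahproperties}(b)) controls the leading second-order part while the $\Psi$-dependent trilinear perturbation is treated as a compact term via the sharp bounds of Lemma~\ref{lem:boundednessBh} and a duality estimate based on the regular dual problem \eqref{eqn.dualcts}--\eqref{eqn.boundduallinearised}. Since $\bV_h$ is finite dimensional, \eqref{eqn.discretelinearised} is well posed as soon as there exist $\beta_0>0$ and $h_0>0$ such that, for $h_{\max}\le h_0$, every $\Theta_h\in\bV_h$ with $|\Theta_h|_{2,h}=1$ satisfies $M:=\sup_{0\neq\Phi_h\in\bV_h}\cA_h(\Theta_h,\Phi_h)/|\Phi_h|_{2,h}\ge\beta_0$. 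Note that a naive direct perturbation of the continuous inf--sup for the whole operator fails, because the VEM stabilisation makes $A_h$ differ from $A$ by an $O(1)$ amount on nonpolynomial arguments; this is why the coercive and the compact parts must be handled separately.

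First I would record a G\aa rding inequality. Using Lemma~\ref{lem:Ahproperties}(b) together with bounds (d) and (f) of Lemma~\ref{lem:boundednessBh} (with $\bxi=\Psi$ and $\Phi=0$), I obtain
\[\cA_h(\Theta_h,\Theta_h)\ge C_0|\Theta_h|_{2,h}^2-C_1\|\Psi\|_{2+\alpha}\,|\Theta_h|_{2,h}\,|\Theta_h|_{1,h},\]
so that, since $|\Theta_h|_{2,h}=1$ and $M\ge\cA_h(\Theta_h,\Theta_h)$, one has $M\ge C_0-C_1\|\Psi\|_{2+\alpha}\,|\Theta_h|_{1,h}$. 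It therefore suffices to control the lower-order norm $|\Theta_h|_{1,h}$.

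The heart of the proof is the duality (Aubin--Nitsche) step bounding $|\Theta_h|_{1,h}$. Set $\Theta^c:=E_h\Theta_h\in\bv$; by Lemma~\ref{lem:Eh}, $|\Theta^c|_2\lesssim1$ and $|\Theta_h-\Theta^c|_{1,h}\lesssim h_{\max}$. Let $\bxi\in\bv\cap\bH^{2+\alpha}(\O)$ solve \eqref{eqn.dualcts} with data $Q\in\bH^{-1}(\O)$ chosen so that $(Q,\Phi)=(\nabla\Theta^c,\nabla\Phi)$ for all $\Phi\in\bv$; then $\cA(\Theta^c,\bxi)=|\Theta^c|_1^2$ and, by \eqref{eqn.boundduallinearised}, $\|\bxi\|_{2+\alpha}\lesssim\|Q\|_{-1}\lesssim|\Theta^c|_1\lesssim|\Theta_h|_{1,h}+h_{\max}$. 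Writing, with $\bxi_I^h\in\bV_h$ the interpolant of $\bxi$,
\[|\Theta^c|_1^2=\cA(\Theta^c,\bxi)=\cA_h(\Theta_h,\bxi_I^h)+\big(\cA(\Theta^c,\bxi)-\cA_h(\Theta_h,\bxi_I^h)\big),\]
I would bound the two contributions separately. Because $\bxi$ now carries the full $H^{2+\alpha}$ regularity, the nonconforming consistency error in the bracket is of order $h_{\max}^{\alpha}$: the second-order part is handled by \eqref{eqn.consistency}, the stability \eqref{eqn.stability} and Lemma~\ref{lem:Apw}(a)--(b), and the trilinear part by Lemma~\ref{lem:boundednessBh}(c)--(f) with Remark~\ref{remark.consequence}, giving an $O(h_{\max}^{\alpha})\|\bxi\|_{2+\alpha}|\Theta_h|_{2,h}$ bound. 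The first contribution is estimated by the definition of $M$ and interpolation stability (Lemma~\ref{lem.vI}) as $\cA_h(\Theta_h,\bxi_I^h)\le M\,|\bxi_I^h|_{2,h}\lesssim M\|\bxi\|_{2+\alpha}$. Combining these, inserting $\|\bxi\|_{2+\alpha}\lesssim|\Theta_h|_{1,h}+h_{\max}$, and absorbing via Young's inequality (using $M\lesssim1$ from boundedness of $\cA_h$) yields $|\Theta_h|_{1,h}\lesssim M+h_{\max}^{\alpha}$.

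Substituting this into the G\aa rding bound gives $M\ge C_0-C_2(M+h_{\max}^{\alpha})$, hence $M\gtrsim C_0-O(h_{\max}^{\alpha})$, which is bounded below by a fixed positive constant $\beta_0$ once $h_{\max}\le h_0$ is small enough; this is the desired discrete inf--sup condition, and well-posedness follows. The main obstacle is exactly the duality step: one must construct the correct regular dual solution and, above all, bound the nonconforming/VEM consistency error by $O(h_{\max}^{\alpha})$, which is where the elliptic regularity $\Psi,\bxi\in\bH^{2+\alpha}(\O)$, the enrichment operator $E_h$, and the $H^1$-type bounds of Lemma~\ref{lem:boundednessBh} all enter decisively.
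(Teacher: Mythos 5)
Your proposal is correct and follows essentially the same route as the paper: a G\aa rding-type inequality from the coercivity of $A_h$ and the lower-order bounds on $\hB_h$, combined with an Aubin--Nitsche duality step using the dual problem \eqref{eqn.dualcts} with $Q=-\Delta E_h\Theta_h$ and the enrichment operator to absorb the $H^1$-type term for small $h_{\max}$. The only (cosmetic) difference is packaging: you prove the discrete inf-sup constant $M$ is bounded below for arbitrary normalized $\Theta_h$ and bound $\cA_h(\Theta_h,\bxi_I^h)\le M|\bxi_I^h|_{2,h}$, whereas the paper derives the a priori bound $|\Theta_h|_{2,h}\lesssim\|G\|$ for the solution itself by substituting $\cA_h(\Theta_h,\bxi_I^h)=G_h(\bxi_I^h)$ --- equivalent statements in finite dimensions.
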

\begin{proof}
\noindent Since $\bv_h$ is finite dimensional and \eqref{eqn.discretelinearised} is linear, establishing an a priori bound is sufficient to prove that \eqref{eqn.discretelinearised} has a unique solution. The choice $\Phi_h =\Theta_h$ in \eqref{eqn.discretelinearised}, Lemma~\ref{lem:Ahproperties}.b,~\ref{lem:boundednessBh}.d and .f, and \eqref{eqn.Pihbound} lead to
\begin{align}
G_h(\Theta_h)&=\cA_{h}(\Theta_h,\Theta_h)=A_h(\Theta_h,\Theta_h)+\hB_h(\Psi,\Theta_h,\Theta_h)+\hB_h(\Theta_h,\Psi,\Theta_h)\nonumber\\
&\gtrsim |\Theta_h|_{2,h}^2-\|\Psi\|_{2+\alpha}|\Theta_h|_{2,h}|\Pi^h\Theta_h|_{1,h}.\label{eqn.gh}
\end{align}
The arguments in the proof of Lemma~\ref{lem:Ahproperties}.c show $G_h(\Theta_h) \lesssim \|G\||\Theta_h|_{2,h}$. 
This with the above displayed inequality results in
\begin{equation}\label{eqn.thetah}
|\Theta_h|_{2,h}\lesssim \|\Psi\|_{2+\alpha}|\Pi^h\Theta_h|_{1,h}+\|G\|.
\end{equation}
The triangle inequality and \eqref{eqn.Eh} provide
\begin{equation}\label{eqn.tri.thetah}
|\Pi^h\Theta_h|_{1,h}\le |\Pi^h\Theta_h-E_h\Theta_h|_{1,h}+|E_h\Theta_h|_{1}\lesssim h_{\max}|\Theta_h|_{2,h}+|E_h\Theta_h|_{1}.
\end{equation}
To estimate $|E_h\Theta_h|_{1},$ choose $Q=-\Delta E_h\Theta_h$ and $\Phi=E_h\Theta_h$ in the dual problem \eqref{eqn.dualcts}. The definition of $\cA(\bullet,\bullet)$ in \eqref{eqn.ctslinearised} and \eqref{eqn.discretelinearised} for $\Phi_h=\bxi_I^h$ read
\begin{align}
|E_h\Theta_h|_{1}^2
&=A(E_h\Theta_h,\bxi)+B(\Psi,E_h\Theta_h,\bxi)+B(E_h\Theta_h,\Psi,\bxi)\\
&=A_{\pw}(E_h\Theta_h,\bxi)+\hB_h(\Psi,E_h\Theta_h,\bxi-\bxi_I^h)+\hB_h(E_h\Theta_h,\Psi,\bxi-\bxi_I^h)\nonumber\\
&\quad+\hB_h(\Psi,E_h\Theta_h,\bxi_I^h)+\hB_h(E_h\Theta_h,\Psi,\bxi_I^h)-A_h(\Theta_h,\bxi_I^h)\nonumber\\
&\quad -\hB_h(\Psi,\Theta_h,\bxi_I^h)-\hB_h(\Theta_h,\Psi,\bxi_I^h)+G_h(\bxi_I^h).
\end{align}
Since $\bxi_\pi^h \in \P_2(\cT_h)$, the consistency property in \eqref{eqn.consistency} shows $A_h(\Theta_h,\bxi_\pi^h)=A_{\pw}(\Theta_h,\bxi_\pi^h).$ This and elementary algebra reveal 
\begin{align}
|E_h\Theta_h|_{1,2,h}^2
&=A_{\pw}(E_h\Theta_h-\Theta_h,\bxi)+A_{\pw}(\Theta_h,\bxi-\bxi_\pi^h)+A_h(\Theta_h,\bxi_\pi^h-\bxi_I^h)\nonumber\\
&\quad+\hB_h(\Psi,E_h\Theta_h,\bxi-\bxi_I^h)+\hB_h(E_h\Theta_h,\Psi,\bxi-\bxi_I^h)\nonumber\\
&\quad+\hB_h(\Psi,E_h\Theta_h-\Theta_h,\bxi_I^h)+\hB_h(E_h\Theta_h-\Theta_h,\Psi,\bxi_I^h-\bxi)\nonumber\\
&\quad+\hB_h(E_h\Theta_h-\Theta_h,\Psi,\bxi) +G_h(\bxi_I^h):=T_1+\cdots+T_9.\label{eqn.t1tot9}
\end{align}
\noindent Lemma~\ref{lem:Apw} leads to $T_1 \lesssim h_{\max}^\alpha \|\bxi\|_{2+\alpha}|\Theta_h|_{2,h}$. The Cauchy inequality and Lemma~\ref{lem.vpi} show $T_2\lesssim h_{\max}^\alpha \|\bxi\|_{2+\alpha}|\Theta_h|_{2,h}$. Lemma~\ref{lem:Ahproperties}.a, and \eqref{eqn.vIvpi} provide $T_3\lesssim h_{\max}^\alpha \|\bxi\|_{2+\alpha}|\Theta_h|_{2,h}$. Lemma~\ref{lem:boundednessBh}.c, triangle inequality with $\Theta_h$, Lemma~\ref{lem:Eh} and \eqref{eqn.xiPixiIh1h} read
 $T_4 \lesssim h_{\max}^{1+\alpha}\|\bxi\|_{2+\alpha}|\Theta_h|_{2,h}$. Lemma~\ref{lem:boundednessBh}.f shows
$T_5 \lesssim |E_h \Theta_h|_{2,h}\|\Psi\|_{2+\alpha}|\bxi-\Pi^h\bxi_I^h|_{1,h}$. Lemma~\ref{lem:Eh} and \eqref{eqn.xiPixiIh1h} provide $T_5 \lesssim h_{\max}^{1+\alpha}\|\bxi\|_{2+\alpha}|\Theta_h|_{2,h}$. Lemma~\ref{lem:boundednessBh}.e, \eqref{eqn.Eh}, and Lemma~\ref{lem.vI} reveal  $T_6 \lesssim h_{\max}\|\bxi\|_{2+\alpha}|\Theta_h|_{2,h}$. Lemma~\ref{lem:boundednessBh}.f, Remark~\ref{remark.consequence}, and \eqref{eqn.xiPixiIh1h} lead to $T_7 \lesssim h_{\max}^{1+\alpha}\|\bxi\|_{2+\alpha}|\Theta_h|_{2,h}$. 

\noindent Since $\Psi,\bxi \in \bV \cap \bH^{2+\alpha}(\O)$, the Sobolev embedding $H^{2+\alpha}(\O) \hookrightarrow W^{2,4}(\O)$ shows
\[|\nabla \Psi \cdot \nabla \bxi|_{1} \le |\nabla \Psi|_{1,4}|\nabla \bxi|_{1,4}\lesssim \|\Psi\|_{2+\alpha}\|\bxi\|_{2+\alpha}.\]
Hence, $\nabla \Psi \cdot \nabla \bxi \in H^1(\O)$ and $T_8 \lesssim \sum_{K \in \cT_h}|D^2(E_h \Theta_h- \Pi^h\Theta_h)|_{-1,K}|\nabla \Psi \cdot \nabla \bxi|_{1,K}$. The definition of the dual norm and integration by parts lead to $|D^2(E_h \Theta_h- \Pi^h\Theta_h)|_{-1,K}\lesssim |E_h \Theta_h-\Pi^h\Theta_h|_{1,K}$. The combination of the above estimates and \Holder inequality provides $T_8 \lesssim |E_h \Theta_h- \Pi^h\Theta_h|_{1,h}\|\Psi\|_{2+\alpha}\|\bxi\|_{2+\alpha}$. This and \eqref{eqn.Eh} imply $T_8 \lesssim h_{\max}\|\bxi\|_{2+\alpha}|\Theta_h|_{2,h}$. The same arguments in the proof of Lemma~\ref{lem:Ahproperties}.c provides $T_9 \lesssim \|G\||\bxi_I^h|_{2,h}\lesssim \|G\|\|\bxi\|_{2+\alpha}$ with Lemma~\ref{lem.vI} in the last step. A substitution of the estimates $T_1,\cdots,T_9$ in \eqref{eqn.t1tot9} reads
$$|E_h\Theta_h|_{1}^2\lesssim (h_{\max}^\alpha|\Theta_h|_{2,h}+\|G\|)\|\bxi\|_{2+\alpha}.$$
Since $\|Q\|_{-1}=|E_h\Theta_h|_{1,2,h}$, the above displayed inequality and \eqref{eqn.boundduallinearised} imply
$$|E_h\Theta_h|_{1}\lesssim h_{\max}^\alpha|\Theta_h|_{2,h}+\|G\|.$$
The combination of the above displayed inequality, \eqref{eqn.tri.thetah}, and \eqref{eqn.thetah} results in
$$|\Theta_h|_{2,h}\le C_1 h_{\max}^\alpha |\Theta_h|_{2,h}+C\|G\|.$$
For a choice of $h_{\max}\le h_1=\left(\frac{1}{2C_1}\right)^{\frac{1}{\alpha}}$, $|\Theta_h|_{2,h}\lesssim \|G\|$ and this implies the assertion.
\end{proof}
\begin{rem}
	Let $\Psi$ be a regular solution to \eqref{eqn.discretelinearised}. Then for a sufficiently small $h_{\max}$, the discrete linearised dual problem: given $Q \in \bL^2(\O)$, find $\bxi_h \in \bv_h$ such that
	$$\cA_{h}(\Phi_h,\bxi_h)=Q_h(\Phi_h), \forall\, \Phi_h \in \bV_h$$
	is well-posed, where $\cA_{h}(\bullet,\bullet)$ is defined as in \eqref{eqn.discretelinearised}. The proof is similar to Theorem~\ref{thm.discretelinearised} and hence is skipped.
\end{rem}
\noindent Since the discrete linearised problem and the dual problem are well-posed, $\cA_{h}:\bv_h\times \bv_h \to \R$ defined by
\begin{equation}\label{defn.Apsih}
\cA_{h}(\Theta_h,\Phi_h):=A_h(\Theta_h,\Phi_h)+\hB_h(\Psi,\Theta_h,\Phi_h)+\hB_h(\Theta_h,\Psi,\Phi_h)
\end{equation}
satisifies {\it discrete inf-sup condition} on $\bv_h \times \bv_h$ \cite{Brezzi,ng2}, that is, there exists a constant $\widehat{\beta}>0$ such that
\begin{equation}\label{defn.beta}
\sup_{|\Theta_h|_{2,h}=1}\cA_{h}(\Theta_h,\Phi_h)\ge \widehat{\beta} |\Phi_h|_{2,h}\mbox{ and } \sup_{|\Phi_h|_{2,h}=1}\cA_{h}(\Theta_h,\Phi_h)\ge \widehat{\beta} |\Theta_h|_{2,h}.
\end{equation}
\noindent Define the perturbed bilinear form by, for all $\Theta_h,\Phi_h \in \bv_h$,
\begin{equation}\label{defn.perturbed}
\hcA_{h}(\Theta_h,\Phi_h):=A_h(\Theta_h,\Phi_h)+B_h(\Psi_I^h,\Theta_h,\Phi_h)+B_h(\Theta_h,\Psi_I^h,\Phi_h),
\end{equation}
where $\Psi_I^h$ is the interpolant of $\Psi$.
\begin{lem}\label{lem:perturbed}
Let $\Psi$ be a regular solution to \eqref{VKE_weak}. Then for a sufficiently small $h_{\max}$, the perturbed bilinear form $\hcA_h(\bullet,\bullet)$ in \eqref{defn.perturbed} satisfies discrete inf-sup condition on $\bv_h \times \bv_h$.	
\end{lem}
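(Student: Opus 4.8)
The plan is to show that the perturbed bilinear form $\hcA_h(\bullet,\bullet)$ is a small perturbation of $\cA_h(\bullet,\bullet)$, which already satisfies the discrete inf-sup condition \eqref{defn.beta} by Theorem~\ref{thm.discretelinearised}. Since the inf-sup constant $\widehat\beta$ for $\cA_h$ is strictly positive, it suffices to bound the difference $\cA_h(\Theta_h,\Phi_h)-\hcA_h(\Theta_h,\Phi_h)$ by a term of the form $C h_{\max}^{\gamma}|\Theta_h|_{2,h}|\Phi_h|_{2,h}$ for some positive power $\gamma$; then choosing $h_{\max}$ small enough so that $Ch_{\max}^\gamma<\widehat\beta$ yields a perturbed inf-sup constant $\widehat\beta-Ch_{\max}^\gamma>0$ by a standard perturbation argument.

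First I would write out the difference explicitly. From \eqref{defn.Apsih} and \eqref{defn.perturbed},
\begin{align}
\cA_h(\Theta_h,\Phi_h)-\hcA_h(\Theta_h,\Phi_h)&=\hB_h(\Psi,\Theta_h,\Phi_h)-B_h(\Psi_I^h,\Theta_h,\Phi_h)\nonumber\\
&\quad+\hB_h(\Theta_h,\Psi,\Phi_h)-B_h(\Theta_h,\Psi_I^h,\Phi_h).\nonumber
\end{align}
Because $B_h$ is the restriction of $\hB_h$ to discrete arguments, each pair telescopes into an expression involving $\Psi-\Psi_I^h$ in one slot (after replacing the discrete argument $\Psi_I^h$ by the continuous $\Psi$, the remaining terms have $\Psi$ or $\Pi^h\Psi_I^h$ in the Hessian/gradient slots). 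The idea is to insert and subtract intermediate terms so that each resulting trilinear form has the difference $\Psi-\Psi_I^h$ (or $\Psi-\Pi^h\Psi_I^h$) as one of its three entries, and then apply the boundedness estimates of Lemma~\ref{lem:boundednessBh} together with the interpolation bounds \eqref{eqn.xiPixiIh1h} from Remark~\ref{remark.consequence}.

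The main work is the bookkeeping of these telescoping insertions and the choice of which part of Lemma~\ref{lem:boundednessBh} to apply to each piece. For the terms where $\Psi-\Psi_I^h$ sits in the ``Hessian'' (first) slot, part (f) of Lemma~\ref{lem:boundednessBh} gives a factor $|\Psi-\Pi^h\Psi_I^h|_{2,h}\lesssim h_{\max}^{\alpha}\|\Psi\|_{2+\alpha}$ via \eqref{eqn.xiPixiIh1h}; for the terms where the difference sits in a ``gradient'' slot, parts (c)--(e) give a factor $|\Psi-\Pi^h\Psi_I^h|_{1,h}\lesssim h_{\max}^{1+\alpha}\|\Psi\|_{2+\alpha}$. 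In every case one obtains at least one power $h_{\max}^{\alpha}$, so the full difference is bounded by $Ch_{\max}^{\alpha}\|\Psi\|_{2+\alpha}|\Theta_h|_{2,h}|\Phi_h|_{2,h}$.

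The hard part will be handling the regularity mismatch: $\Psi\in\bH^{2+\alpha}(\O)$ lies in the required smoothness class for Lemma~\ref{lem:boundednessBh}, but $\Psi_I^h$ is only a discrete function, so one must be careful to keep $\Psi$ (not $\Psi_I^h$) in the slot that Lemma~\ref{lem:boundednessBh} demands to be in $\bv\cap\bH^{2+\alpha}(\O)$, and move the interpolation error $\Psi-\Psi_I^h$ into a slot controlled by a piecewise $H^2$ or $H^1$ seminorm. Once the estimate $|\cA_h(\Theta_h,\Phi_h)-\hcA_h(\Theta_h,\Phi_h)|\lesssim h_{\max}^{\alpha}|\Theta_h|_{2,h}|\Phi_h|_{2,h}$ is in hand, the conclusion follows: for any fixed $\Theta_h$ with $|\Theta_h|_{2,h}=1$, pick $\Phi_h$ attaining the supremum in \eqref{defn.beta} for $\cA_h$, whence $\hcA_h(\Theta_h,\Phi_h)\ge\widehat\beta-Ch_{\max}^\alpha$, and this is bounded below by a positive constant for $h_{\max}$ small enough. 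The symmetric statement follows identically, establishing the discrete inf-sup condition for $\hcA_h$.
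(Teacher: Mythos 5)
Your overall strategy is exactly the paper's: write
$\hcA_h(\Theta_h,\Phi_h)=\cA_h(\Theta_h,\Phi_h)-\hB_h(\Psi-\Psi_I^h,\Theta_h,\Phi_h)-\hB_h(\Theta_h,\Psi-\Psi_I^h,\Phi_h)$
(using trilinearity and the fact that $B_h$ and $\hB_h$ coincide on discrete arguments), bound the two perturbation terms by $C_b h_{\max}^{\alpha}\|\Psi\|_{2+\alpha}|\Phi_h|_{2,h}$, and absorb them into $\widehat{\beta}$ for $h_{\max}$ small; the threshold $h_{\max}\le(\widehat{\beta}/(4C_b\|\Psi\|_{2+\alpha}))^{1/\alpha}$ then comes out the same way as in the paper.

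However, your execution plan has a concrete flaw. You propose to use parts (c)--(f) of Lemma~\ref{lem:boundednessBh} and to resolve the ``regularity mismatch'' by keeping $\Psi$ in the slot that those parts require to lie in $\bv\cap\bH^{2+\alpha}(\O)$. But after telescoping, that slot holds $\Psi-\Psi_I^h$, and the remaining slots hold $\Theta_h,\Phi_h\in\bv_h$; none of the three arguments of either difference term lies in $\bv\cap\bH^{2+\alpha}(\O)$, so parts (c)--(f) are not applicable. The only usable bound is part (b). For $\hB_h(\Psi-\Psi_I^h,\Theta_h,\Phi_h)$ it gives the factor $|\Psi-\Pi^h\Psi_I^h|_{2,h}\lesssim h_{\max}^{\alpha}\|\Psi\|_{2+\alpha}$ as you expect, but for $\hB_h(\Theta_h,\Psi-\Psi_I^h,\Phi_h)$ it produces the broken $W^{1,4}$ seminorm $|\Psi-\Pi^h\Psi_I^h|_{1,4,h}$, not the $|\cdot|_{1,h}$ seminorm you quote. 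Controlling this $W^{1,4}$ term is the one genuinely nonobvious step of the proof: the paper inserts $\Psi_\pi^h$, applies an inverse estimate to the piecewise polynomial $\Pi^h(\Psi_\pi^h-\Psi_I^h)$, and combines Lemmas~\ref{lem.vI},~\ref{lem.vpi} and~\ref{lem:vh-PiKvh} to obtain $|\Psi-\Pi^h\Psi_I^h|_{1,4,h}\lesssim h_{\max}^{\alpha+1/2}\|\Psi\|_{2+\alpha}$ as in \eqref{eqn.psipihppsiIh14h}, which still yields the needed overall factor $h_{\max}^{\alpha}$. Your sketch as written would stall at this point, even though the final conclusion is unaffected once that estimate is supplied.
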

\begin{proof}
Elementary algebra and \eqref{defn.beta} show
\begin{align}
\sup_{|\Theta_h|_{2,h}=1}\hcA_{h}&(\Theta_h,\Phi_h)=\sup_{|\Theta_h|_{2,h}=1}(A_h(\Theta_h,\Phi_h)+B_h(\Psi_I^h,\Theta_h,\Phi_h)+B_h(\Theta_h,\Psi_I^h,\Phi_h))\\
&\ge \sup_{|\Theta_h|_{2,h}=1}\cA_h(\Theta_h,\Phi_h)-\sup_{|\Theta_h|_{2,h}=1}(\hB_h({\Psi}-\Psi_I^h,\Theta_h,\Phi_h)+\hB_h(\Theta_h,{\Psi}-\Psi_I^h,\Phi_h))\\
&\ge \widehat{\beta} |\Phi_h|_{2,h}-\sup_{|\Theta_h|_{2,h}=1}(\hB_h({\Psi}-\Psi_I^h,\Theta_h,\Phi_h)+\hB_h(\Theta_h,{\Psi}-\Psi_I^h,\Phi_h)).\label{eqn.sup}
\end{align}
Lemma~\ref{lem:boundednessBh}.b,~\ref{lem:discreteSobolev}, and \eqref{eqn.Pihbound} imply $\hB_h({\Psi}-\Psi_I^h,\Theta_h,\Phi_h) \lesssim |{\Psi}-\Pi^h\Psi_I^h|_{2,h}|\Phi_h|_{2,h}$. 
This and \eqref{eqn.xiPixiIh1h} provide $\hB_h({\Psi}-\Psi_I^h,\Theta_h,\Phi_h) \le C_b^1h_{\max}^\alpha\|\Psi\|_{2+\alpha}|\Phi_h|_{2,h}$. Lemma~\ref{lem:boundednessBh}.b,~\ref{lem:discreteSobolev}, and \eqref{eqn.Pihbound} imply $\hB_h(\Theta_h,{\Psi}-\Psi_I^h,\Phi_h) \lesssim |{\Psi}-\Pi^h\Psi_I^h|_{1,4,h}|\Phi_h|_{2,h}$. The relation $\Pi^h\Psi_\pi^h=\Psi_\pi^h$, inverse estimate for $\Pi^h({\Psi_\pi^h}-\Psi_I^h )\in \P_2(\cT_h)$\cite{Brenner}, and Lemma~\ref{lem.vpi} show
\begin{align}\label{eqn.PsiPiPsiIh14h}
|{\Psi}-\Pi^h\Psi_I^h|_{1,4,h}&\le |{\Psi}-\Psi_\pi^h|_{1,4,h}+|\Pi^h{(\Psi_\pi^h}-\Psi_I^h)|_{1,4,h} \nonumber
\\&\lesssim  h_{\max}^{\alpha+1/2}\|\Psi\|_{2+\alpha}+h_{\max}^{-1/2}|\Pi^h({\Psi_\pi^h}-\Psi_I^h)|_{1,h}\nonumber\\
&\lesssim  h_{\max}^{\alpha+1/2}\|\Psi\|_{2+\alpha}+h_{\max}^{-1/2}(|{\Psi_\pi^h}-\Psi_I^h|_{1,h}+|\Psi_I^h-\Pi^h\Psi_I^h|_{1,h})\nonumber\\
&\lesssim h_{\max}^{\alpha+1/2}\|\Psi\|_{2+\alpha}+h_{\max}^{-1/2}|\Psi_I^h-\Pi^h\Psi_I^h|_{1,h}
\end{align}
with Lemma~\ref{lem.vpi} and~\ref{lem.vI} in the last step. 
 This and \eqref{t3} reveal 
 \begin{equation}\label{eqn.psipihppsiIh14h}
 |{\Psi}-\Pi^h\Psi_I^h|_{1,4,h} \le h_{\max}^{\alpha+1/2}\|\Psi\|_{2+\alpha}.
 \end{equation}
 \noindent Consequently, $\hB_h(\Theta_h,{\Psi}-\Psi_I^h,\Phi_h) \le C_b^2h_{\max}^\alpha\|\Psi\|_{2+\alpha}|\Phi_h|_{2,h}$. Let $C_b:=\max\{C_b^1,C_b^2\}$. Then, \eqref{eqn.sup} shows
\begin{align}
\sup_{|\Theta_h|_{2,h}=1}\hcA_{h}&(\Theta_h,\Phi_h)\ge \widehat{\beta} |\Phi_h|_{2,h}-2C_bh_{\max}^\alpha\|\Psi\|_{2+\alpha}|\Phi_h|_{2,h}.
\end{align}
A choice of $\displaystyle h_{\max}\le h_2=\left(\frac{\widehat{\beta}}{4C_b\|\Psi\|_{2+\alpha}}\right)^{1/\alpha}$ provides
\[\sup_{|\Theta_h|_{2,h}=1}\hcA_{h}(\Theta_h,\Phi_h)\ge ({\widehat{\beta}}/{2}) |\Phi_h|_{2,h}.\]
The analog arguments leads to $\displaystyle \sup_{|\Phi_h|_{2,h}=1}\hcA_{h}(\Phi_h,\Theta_h)\ge ({\widehat{\beta}}/{2}) |\Theta_h|_{2,h}.$
\end{proof}
\subsection{Existence and uniqueness}\label{sec:existence}
\noindent This section establishes the existence and uniqueness of the discrete solution and is an application of Brouwer’s fixed point theorem.
\begin{thm}[Existence and uniqueness of a discrete solution]\label{thm:existence}
	Let $\Psi$ be a regular solution to \eqref{VKE_weak}. For a sufficiently small choice of $h_{\max}$, there exists a unique solution $\Psi_h$ to the discrete \eqref{VKE_Morleyweak} that satisfies $|\Psi_h-\Psi_I^h|_{2,h}\le R(h_{\max})$ for some positive constant $R(h_{\max})$ depending on $h_{\max}$.
\end{thm}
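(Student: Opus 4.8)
The plan is to recast the nonlinear discrete problem \eqref{VKE_Morleyweak} as a fixed-point equation for the inf-sup stable perturbed form $\hcA_h$ of \eqref{defn.perturbed}, and to locate its solution on a small ball about the interpolant $\Psi_I^h$. Writing $\Psi_h = \Psi_I^h + y_h$ with increment $y_h \in \bv_h$, the trilinearity and symmetry of $B_h$ give, for every $\Phi_h \in \bv_h$,
\[ A_h(\Psi_h,\Phi_h) + B_h(\Psi_h,\Psi_h,\Phi_h) = A_h(\Psi_I^h,\Phi_h) + B_h(\Psi_I^h,\Psi_I^h,\Phi_h) + \hcA_h(y_h,\Phi_h) + B_h(y_h,y_h,\Phi_h), \]
so that $\Psi_h$ solves \eqref{VKE_Morleyweak} if and only if $y_h$ satisfies $\hcA_h(y_h,\Phi_h) = \Lambda(\Phi_h) - B_h(y_h,y_h,\Phi_h)$ for all $\Phi_h \in \bv_h$, where $\Lambda(\Phi_h) := F_h(\Phi_h) - A_h(\Psi_I^h,\Phi_h) - B_h(\Psi_I^h,\Psi_I^h,\Phi_h)$ is the consistency residual of the interpolant. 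Since $\hcA_h$ is inf-sup stable by Lemma~\ref{lem:perturbed}, the map $\mu : \bv_h \to \bv_h$ that sends $y_h$ to the unique solution $w_h$ of $\hcA_h(w_h,\Phi_h) = \Lambda(\Phi_h) - B_h(y_h,y_h,\Phi_h)$ is well defined, and any fixed point of $\mu$ produces a solution $\Psi_h = \Psi_I^h + y_h$ of \eqref{VKE_Morleyweak}.

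The crux is the bound $\Lambda(\Phi_h) \lesssim h_{\max}^{\alpha}|\Phi_h|_{2,h}$. Because $V_h \not\subset V$, one cannot test the continuous equation \eqref{VKE_weak} with $\Phi_h$ directly; instead I would pass to the conforming function $E_h\Phi_h \in \bv$ via the enrichment operator and use $F(E_h\Phi_h) = A(\Psi,E_h\Phi_h) + B(\Psi,\Psi,E_h\Phi_h)$. Subtracting this identity and inserting $A_{\pw}$, $\hB_h$ and the projection $\Pi^h$, I would split $\Lambda$ into a load-consistency part comparing $F_h$ with $F$, a linear part comparing $A_h(\Psi_I^h,\cdot)$ with $A(\Psi,\cdot)$, and a trilinear part comparing $B_h(\Psi_I^h,\Psi_I^h,\cdot)$ with $B(\Psi,\Psi,\cdot)$. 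Each group is controlled by $h_{\max}^{\alpha}$ using the approximation estimates of Lemmas~\ref{lem.vI}, \ref{lem.vpi} and \ref{lem:Eh}, the consistency and stability of $A_h$, the bounds of Lemma~\ref{lem:Apw}, the trilinear estimates of Lemma~\ref{lem:boundednessBh}, and the regularity $\Psi \in \bH^{2+\alpha}(\O)$ from Lemma~\ref{lem:aprioriboundcts}. This is exactly the delicate bookkeeping already carried out in the $T_1,\dots,T_9$ decomposition of Theorem~\ref{thm.discretelinearised}, now applied to the nonlinear residual.

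Granting $\Lambda(\Phi_h) \le C_\Lambda h_{\max}^{\alpha}|\Phi_h|_{2,h}$, the inf-sup stability of Lemma~\ref{lem:perturbed} together with Lemma~\ref{lem:boundednessBh}.a (which gives $B_h(y_h,y_h,\Phi_h) \le C_B |y_h|_{2,h}^2|\Phi_h|_{2,h}$) yields
\[ |\mu(y_h)|_{2,h} \lesssim C_\Lambda h_{\max}^{\alpha} + C_B |y_h|_{2,h}^2 . \]
Choosing $R(h_{\max}) \sim h_{\max}^{\alpha}$ (say $R = 4C_\Lambda \widehat{\beta}^{-1} h_{\max}^{\alpha}$), the quadratic term is dominated by the linear one once $h_{\max}$ is small enough that $C_B R \lesssim 1$, so $\mu$ maps the closed ball $B_R := \{ y_h \in \bv_h : |y_h|_{2,h} \le R(h_{\max}) \}$ into itself. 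For $y_h^1,y_h^2 \in B_R$, trilinearity gives $\hcA_h(\mu(y_h^1) - \mu(y_h^2),\Phi_h) = -B_h(y_h^1 - y_h^2, y_h^1,\Phi_h) - B_h(y_h^2, y_h^1 - y_h^2,\Phi_h)$, whence $|\mu(y_h^1) - \mu(y_h^2)|_{2,h} \lesssim C_B R\,|y_h^1 - y_h^2|_{2,h}$, a contraction for $h_{\max}$ small. As $B_R$ is a nonempty, closed, convex subset of the finite-dimensional space $\bv_h$, Brouwer's fixed-point theorem furnishes a fixed point $y_h^\star \in B_R$ (existence), while the contraction estimate forces it to be unique in $B_R$; equivalently, the Banach fixed-point theorem gives both at once. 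Setting $\Psi_h := \Psi_I^h + y_h^\star$ delivers the unique discrete solution with $|\Psi_h - \Psi_I^h|_{2,h} = |y_h^\star|_{2,h} \le R(h_{\max})$.

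The main obstacle is the consistency estimate of the second step: the nonconformity $V_h \not\subset V$ forces the detour through $E_h$ and $\Pi^h$, and the genuinely nonlinear mismatch between $B_h(\Psi_I^h,\Psi_I^h,\cdot)$ and $B(\Psi,\Psi,\cdot)$ — absent in the linear plate problem and not enjoying the cancellation available for the Navier--Stokes trilinear form — must be tracked term by term. Everything else (the self-mapping, the contraction, and the fixed-point theorem itself) is then routine.
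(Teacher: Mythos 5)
Your proposal is correct and is essentially the paper's own argument: your map $\mu$ acting on the increment $y_h=\Psi_h-\Psi_I^h$ is exactly the paper's operator $T_h$ translated by $\Psi_I^h$ (indeed $\mu(y_h)=T_h(\Psi_I^h+y_h)-\Psi_I^h$), your residual $\Lambda(\Phi_h)=F_h(\Phi_h)-A_h(\Psi_I^h,\Phi_h)-B_h(\Psi_I^h,\Psi_I^h,\Phi_h)$ is the paper's $T_1+T_2+T_3$ estimated in the same way via $E_h\Phi_h$ and the continuous equation, and the ball-to-ball, contraction and Brouwer/Banach steps coincide with the paper's Steps 1--4. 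No gaps.
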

\begin{proof}
\noindent Consider the non-linear mapping $T_h:\bV_h \to\bV_h$ defined by, for all $\Phi_h \in \bV_h$,
\begin{align}\label{eqn.Th}
&\hcA_h(T_h(\Theta_h),\Phi_h)=F_h(\Phi_h)+B_h(\Psi_I^h,\Theta_h,\Phi_h)+B_h(\Theta_h,\Psi_I^h,\Phi_h)-B_h(\Theta_h,\Theta_h,\Phi_h).
\end{align}
\noindent Lemma~\ref{lem:perturbed} shows that the operator $T_h$ is well defined and continuous. It is easy to check that the any discrete solution $\Psi_h$ to \eqref{VKE_Morleyweak} is a fixed point of $T_h$ and vice versa. Hence, in order to show the existence of a solution to \eqref{VKE_Morleyweak}, it is enough to prove that $T_h$ has a fixed point. For that, define
\[B_R(\Psi_I^h):=\{\Phi_h \in \bV_h:|\Phi_h-\Psi_I^h|_{2,h}\le R\}.\]
\noindent \emph{Step 1 establishes mapping of ball to ball. }
 Lemma~\ref{lem:perturbed} provides, for any $\Phi_h \in \bv_h$ with $|\Phi_h|_{2,h}=1,$
\begin{equation}\label{eqn.beta4}
{\widehat{\beta}}|T_h(\Theta_h)-\Psi_I^h|_{2,h} \le \hcA_h(T_h(\Theta_h)-\Psi_I^h,\Phi_h).
\end{equation}
The definition of $\hcA_h(\bullet,\bullet)$ (resp. $\hcA_h(T_h(\bullet),\bullet)$) in \eqref{defn.perturbed} (resp. \eqref{eqn.Th}) and \eqref{VKE_weak} with $\Phi=E_h\Phi_h$ show
\begin{align}
\hcA_h(T_h(\Theta_h)-\Psi_I^h,\Phi_h)&=\hcA_h(T_h(\Theta_h),\Phi_h)-\hcA_h(\Psi_I^h,\Phi_h)\nonumber\\
&=F_h(\Phi_h)+B_h(\Psi_I^h,\Theta_h,\Phi_h)+B_h(\Theta_h,\Psi_I^h,\Phi_h)-B_h(\Theta_h,\Theta_h,\Phi_h)\nonumber\\
&\quad -A_h(\Psi_I^h,\Phi_h)-2B_h(\Psi_I^h,\Psi_I^h,\Phi_h)\nonumber\\
&=(F_h(\Phi_h)-F(E_h\Phi_h))+(A(\Psi,E_h\Phi_h)-A_h(\Psi_I^h,\Phi_h))\nonumber\\
&\quad +(B(\Psi,\Psi,E_h\Phi_h)-B_h(\Psi_I^h,\Psi_I^h,\Phi_h))+B_h(\Psi_I^h-\Theta_h,\Theta_h-\Psi_I^h,\Phi_h)\nonumber\\
&=:T_1+T_2+T_3+T_4.\label{t1tot4}
\end{align}
An introduction of $(f,\varphi_{1,h})$ for $\Phi_h=(\varphi_{1,h},\varphi_{2,h})$,  $|\langle f_h,\varphi_{1,h}\rangle_K -(f,\varphi_{1,h})_K| \lesssim h_{K}\|f\|_{0,K}|\varphi_{1,h}|_{1,K}$ \cite[Lemma 5.2]{Zhao_Morley} for $K \in \cT_h$, Cauchy inequality, and Lemma~\ref{lem:Eh} lead to
\[T_1 = \langle f_h,\varphi_{1,h}\rangle -(f,\varphi_{1,h})+(f,\varphi_{1,h}-E_h\varphi_{1,h})\lesssim h_{\rm max}\|f\|.\]
The consistency property $A_h(\Psi_\pi^h,\Phi_h)=A_{\pw}(\Psi_\pi^h,\Phi_h)$ from \eqref{eqn.consistency} reveals $T_2 = A_{\pw}(\Psi,E_h\Phi_h-\Phi_h)-A_h(\Psi_I^h-\Psi_\pi^h,\Phi_h)+A_{\pw}(\Psi-\Psi_\pi^h,\Phi_h)$. Lemma~\ref{lem:Apw}.a and~\ref{lem:Ahproperties}.a, Cauchy inequality, \eqref{eqn.vIvpi}, and Lemma~\ref{lem.vpi} read
$T_2\lesssim h_{\rm max}^\alpha \|\Psi\|_{2+\alpha}.$
Lemma~\ref{lem:boundednessBh}.c, b, e,~\ref{lem:discreteSobolev}, and \eqref{eqn.Pihbound} show
\begin{align*}
T_3&=\hB_h(\Psi,\Psi,E_h\Phi_h-\Phi_h)+\hB_h(\Psi-\Psi_I^h,\Psi_I^h,\Phi_h)+\hB_h(\Psi,\Psi-\Psi_I^h,\Phi_h)\nonumber\\
&\lesssim \|\Psi\|_{2+\alpha}\|\Psi\|_2|E_h\Phi_h-\Pi^h\Phi_h|_{1,h}+|\Psi-\Pi^h\Psi_I^h|_{2,h}|\Psi_I^h|_{2,h}+\|\Psi\|_{2+\alpha}|\Psi-\Pi^h\Psi_I^h|_{1,h}\nonumber .
\end{align*}
\noindent The estimates in \eqref{eqn.xiPixiIh1h}, \eqref{eqn.Eh}, and Lemma~\ref{lem.vI} in the above displayed inequality shows $T_3 \lesssim h_{\max}^\alpha\|\Psi\|_{2+\alpha}^2$. Lemma~\ref{lem:boundednessBh}.a (with hidden constant $C_b$) implies
$T_4 \le C_b |\Psi_I^h-\Theta_h|_{2,h}^2.$
 A combination of $T_1,\cdots,T_4$ in \eqref{t1tot4} and then in \eqref{eqn.beta4} leads to
\[|T_h(\Theta_h)-\Psi_I^h|_{2,h} \le Ch_{\rm max}^\alpha+C_b|\Psi_I^h-\Theta_h|_{2,h}^2\]
for some positive constant $C$ independent of $h$ but dependent on $\|\Psi\|_{2+\alpha}$ and $\|f\|$. A choice of $h_{\max}\le h_3=\left(\frac{1}{4C C_b}\right)^{\frac{1}{\alpha}}$ yields $4CC_bh_{\max}^\alpha \le 1$. Since $|\Theta_h-\Psi_I^h|_{2,h}\le R(h_{\max})$, a choice of $R(h_{\max})=2Ch_{\max}^\alpha$ leads to
\[|\Theta_h-\Psi_I^h|_{2,h}\le Ch_{\rm max}^\alpha(1+4CC_bh_{\rm max}^\alpha)\le R(h_{\max}).\]
Hence, $T_h$ maps the ball $B_R(\Psi_I^h)$ into itself.

\smallskip

\noindent \emph{Step 2 establishes the existence of a discrete solution. }Since $T_h$ maps $B_R(\Psi_I^h)$ to itself from Step 1, the Brouwer fixed
	point theorem yields that the mapping $T_h$ has a fixed point, say $\Psi_h$. Hence, \eqref{eqn.Th} and \eqref{defn.perturbed} reveal that $\Psi_h$ is a solution to \eqref{VKE_Morleyweak} with $|\Psi_h-\Psi_I^h|_{2,h}\le R(h_{\max})$.

\smallskip

\noindent \emph{Step 3 establishes that $T_h$ is a contraction. }For $\Theta_1,\Theta_2 \in B_{R(h_{\max})}(\Psi_I^h)$ and for all $\Phi_h \in \bV_h$, let $T_h(\Theta_i)$, $i=1,2$ be the solutions of
		\begin{align}
		\hcA_h(T_h(\Theta_i),\Phi_h)&=F_h(\Phi_h)+B_h(\Psi_I^h,\Theta_i,\Phi_h)+B_h(\Theta_i,\Psi_I^h,\Phi_h)-B_h(\Theta_i,\Theta_i,\Phi_h).
		\end{align}
\noindent Lemma~\ref{lem:perturbed} shows, for any $\Phi_h \in \bv_h$ with $|\Phi_h|_{2,h}=1,$
\begin{align}
{\widehat{\beta}}&|T_h(\Theta_1)-T_h(\Theta_2)|_{2,h} \le \hcA_h(T_h(\Theta_1)-T_h(\Theta_2),\Phi_h)\\
&=B_h(\Psi_I^h,\Theta_1-\Theta_2,\Phi_h)+B_h(\Theta_1-\Theta_2,\Psi_I^h,\Phi_h)+B_h(\Theta_2,\Theta_2,\Phi_h)-B_h(\Theta_1,\Theta_1,\Phi_h)\\
&=B_h(\Theta_2-\Theta_1,\Theta_1-\Psi_I^h,\Phi_h)+B_h(\Theta_2-\Psi_I^h,\Theta_2-\Theta_1,\Phi_h)\\
&\lesssim |\Theta_2-\Theta_1|_{2,h}(|\Theta_1-\Psi_I^h|_{2,h}+|\Theta_2-\Psi_I^h|_{2,h})
\end{align}		
with Lemma~\ref{lem:boundednessBh}.a in the last step. Since $\Theta_1,\Theta_2 \in B_{R(h_{\max})}(\Psi_I^h)$, the above displayed estimate provides
\[|T_h(\Theta_1)-T_h(\Theta_2)|_{2,h} \le Ch_{\max}^\alpha|\Theta_1-\Theta_2|_{2,h}\]
for some positive constant $C$ independent of $h_{\max}$.

\smallskip

\noindent \emph{Step 4 establishes local uniqueness of a discrete solution. }Let $\Psi$ be a regular solution to \eqref{VKE_weak}. For a sufficiently small choice of $h_{\max}$, the contraction mapping theorem establishes the local uniquenes  of the discrete solution $\Psi_h$ to \eqref{VKE_Morleyweak}.
\end{proof}
\begin{lem}[Bound for $\Psi_h$]\label{lem:Psih}
	Let $\Psi_h$ be a discrete solution to \eqref{VKE_Morleyweak} that satisfies $|\Psi_h-\Psi_I^h|_{2,h}\le R(h_{\max})$. Then $|\Psi_h|_{2,h} \lesssim \|f\|$.
\end{lem}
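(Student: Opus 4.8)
The plan is to bound $|\Psi_h|_{2,h}$ by passing through the interpolant $\Psi_I^h$ of the regular solution rather than by testing \eqref{VKE_Morleyweak} with $\Phi_h=\Psi_h$. The direct energy route is unattractive here: unlike the Navier--Stokes trilinear form, the von K\'{a}rm\'{a}n form $B_h$ is fully symmetric, so $B_h(\Psi_h,\Psi_h,\Psi_h)$ does not vanish and coercivity (Lemma~\ref{lem:Ahproperties}.b) together with Lemma~\ref{lem:boundednessBh}.a would only deliver $|\Psi_h|_{2,h}^2\lesssim \|f\|\,|\Psi_h|_{2,h}+|\Psi_h|_{2,h}^3$, whose cubic term cannot be closed without an a priori smallness bound on $|\Psi_h|_{2,h}$. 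Since the hypothesis already supplies $|\Psi_h-\Psi_I^h|_{2,h}\le R(h_{\max})$, I would instead split
\[
|\Psi_h|_{2,h}\le |\Psi_h-\Psi_I^h|_{2,h}+|\Psi_I^h|_{2,h}\le R(h_{\max})+|\Psi_I^h|_{2,h}
\]
and estimate the two summands separately.

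For the interpolant term I would insert $\Psi$, using that $\Psi\in\bv$ is globally conforming so the broken seminorm coincides with the global one, $|\Psi|_{2,h}=|\Psi|_2$. Then $|\Psi_I^h|_{2,h}\le |\Psi-\Psi_I^h|_{2,h}+|\Psi|_2$, where Lemma~\ref{lem.vI} (applied with $s=m=2$ and summed over $K\in\cT_h$) bounds $|\Psi-\Psi_I^h|_{2,h}\lesssim |\Psi|_2$, and the continuous a priori estimate of Lemma~\ref{lem:aprioriboundcts} together with the embedding $L^2(\O)\hookrightarrow H^{-1}(\O)$ gives $|\Psi|_2\le\|\Psi\|_2\lesssim\|f\|_{-1}\lesssim\|f\|$. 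This yields $|\Psi_I^h|_{2,h}\lesssim\|f\|$.

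It then remains to absorb $R(h_{\max})$. Recalling from Step~1 of Theorem~\ref{thm:existence} that $R(h_{\max})=2Ch_{\max}^\alpha$ with $C$ mesh-independent, this term tends to $0$ as $h_{\max}\to 0$, so for $h_{\max}$ sufficiently small it is dominated by $\|f\|$ and the two bounds combine to $|\Psi_h|_{2,h}\lesssim\|f\|$. I expect the only mild subtlety to be bookkeeping the data dependence: $C$ enters through $\|\Psi\|_{2+\alpha}\lesssim \|f\|_{-1}^3+\|f\|_{-1}$ and hence carries higher powers of $\|f\|$, and the smallness factor $h_{\max}^\alpha$ is precisely what lets these be reabsorbed into the clean estimate $\lesssim\|f\|$. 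No genuine obstacle arises beyond this routine choice of $h_{\max}$.
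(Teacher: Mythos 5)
Your proposal is correct and follows essentially the same route as the paper: a triangle inequality through $\Psi_I^h$ and $\Psi$, the hypothesis $|\Psi_h-\Psi_I^h|_{2,h}\le R(h_{\max})$ with $R(h_{\max})=2Ch_{\max}^\alpha$ from Step~1 of Theorem~\ref{thm:existence}, the interpolation estimate of Lemma~\ref{lem.vI}, and the continuous a priori bounds of Lemma~\ref{lem:aprioriboundcts}. The only cosmetic difference is that you invoke Lemma~\ref{lem.vI} with $s=2$ to get $|\Psi-\Psi_I^h|_{2,h}\lesssim|\Psi|_2$ where the paper uses $s=2+\alpha$ to get $h_{\max}^\alpha\|\Psi\|_{2+\alpha}$; both are then closed the same way.
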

\begin{proof}
\noindent Triangle inequalities, $|\Psi_h-\Psi_I^h|_{2,h}\le R(h_{\max})$, and Lemma~\ref{lem.vI} provide
	\begin{align}
	|\Psi_h|_{2,h}&\le |\Psi_h-\Psi_I^h|_{2,h}+|\Psi-\Psi_I^h|_{2,h}+|\Psi|_2\lesssim R(h_{\max})+ h_{\max}^\alpha\|\Psi\|_{2+\alpha}+|\Psi|_2.
	\end{align}
Since $R(h)=2Ch_{\max}^\alpha$ with $C$ depends on $\|\Psi\|_{2+\alpha}$ and $\|f\|$ from Step1 of Theorem~\ref{thm:existence}, a combination of Lemma~\ref{lem:aprioriboundcts} and the above displayed estimate concludes the proof.	
\end{proof}

\section{A priori error control}\label{sec:error}
\noindent  This section deals with the a priori error control under minimal regularity assumption on the exact solution and is followed by the convergence of the Newtons method.
\subsection{Error estimates}
This section proves an a priori error estimate in $H^2$ and $H^1$ norms.
\begin{thm}[Energy norm estimate]\label{thm:energyerror}
		Let $\Psi$ be a regular solution to \eqref{VKE_weak}. For a sufficiently small $h_{\max}$, the discrete solution $\Psi_h$ to \eqref{VKE_Morleyweak} satisfies
		$$\|\Psi-\Psi_h\|_{2,h} \lesssim h_{\max}^\alpha,$$
		where $\alpha \in (\half,1]$ is the index of elliptic regularity.
\end{thm}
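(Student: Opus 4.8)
The plan is to reduce the energy error to two quantities already under control in Section~\ref{sec:wellposed}. Writing $\Psi-\Psi_h=(\Psi-\Psi_I^h)+(\Psi_I^h-\Psi_h)$ and applying the triangle inequality, it suffices to bound the interpolation error $\|\Psi-\Psi_I^h\|_{2,h}$ and the fully discrete error $|\Psi_I^h-\Psi_h|_{2,h}$ (the broken energy norm being controlled by the second-order seminorm on $\bv_h$ via \eqref{eqn.seminormbound}). The first term is immediate from Lemma~\ref{lem.vI} applied componentwise to $\Psi\in\bv\cap\bH^{2+\alpha}(\O)$ with $s=2+\alpha$, giving $\|\Psi-\Psi_I^h\|_{2,h}\lesssim h_{\max}^\alpha\|\Psi\|_{2+\alpha}$. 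For the discrete part, Theorem~\ref{thm:existence} already produces a discrete solution with $|\Psi_I^h-\Psi_h|_{2,h}\le R(h_{\max})=2Ch_{\max}^\alpha$, so that adding the two contributions yields $\|\Psi-\Psi_h\|_{2,h}\lesssim h_{\max}^\alpha$ at once.

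Since the decisive estimate $R(h_{\max})=O(h_{\max}^\alpha)$ is inherited from the ball-mapping step of Theorem~\ref{thm:existence}, I would also give a self-contained derivation of the discrete error bound that does not merely quote the radius. Setting $e_h:=\Psi_I^h-\Psi_h\in\bv_h$ and invoking the discrete inf-sup stability of the perturbed form (Lemma~\ref{lem:perturbed}), one has $\widehat{\beta}\,|e_h|_{2,h}\le\sup_{|\Phi_h|_{2,h}=1}\hcA_h(e_h,\Phi_h)$. Expanding $\hcA_h(e_h,\Phi_h)=\hcA_h(\Psi_I^h,\Phi_h)-\hcA_h(\Psi_h,\Phi_h)$, then substituting the discrete equation \eqref{VKE_Morleyweak} for $\Psi_h$ and the continuous equation \eqref{VKE_weak} tested against the enriched function $E_h\Phi_h$, reproduces exactly the decomposition $T_1+\cdots+T_4$ used in Theorem~\ref{thm:existence}: a load/consistency term $F_h(\Phi_h)-F(E_h\Phi_h)$, a bilinear consistency term $A(\Psi,E_h\Phi_h)-A_h(\Psi_I^h,\Phi_h)$, a trilinear consistency term $B(\Psi,\Psi,E_h\Phi_h)-B_h(\Psi_I^h,\Psi_I^h,\Phi_h)$, and the genuinely quadratic remainder $B_h(\Psi_I^h-\Psi_h,\Psi_h-\Psi_I^h,\Phi_h)$.

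Each linear/consistency term is then shown to be $O(h_{\max}^\alpha)$: the load term by the quadrature estimate of \cite[Lemma 5.2]{Zhao_Morley} together with Lemma~\ref{lem:Eh}; the bilinear term by inserting $\Psi_\pi^h$, using consistency \eqref{eqn.consistency} and the nonconformity bounds of Lemma~\ref{lem:Apw}; and the trilinear term by Lemma~\ref{lem:boundednessBh}(b),(c),(e) combined with the enrichment and projection estimates \eqref{eqn.Eh} and \eqref{eqn.xiPixiIh1h}, every contribution carrying a factor $\|\Psi\|_{2+\alpha}$ (finite by Lemma~\ref{lem:aprioriboundcts}). The quadratic remainder is controlled by Lemma~\ref{lem:boundednessBh}(a) as $C_b|e_h|_{2,h}^2\le C_bR(h_{\max})|e_h|_{2,h}$. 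Collecting, $\widehat{\beta}\,|e_h|_{2,h}\le Ch_{\max}^\alpha+C_bR(h_{\max})|e_h|_{2,h}$, and for $h_{\max}$ small enough the last term is absorbed on the left, leaving $|e_h|_{2,h}\lesssim h_{\max}^\alpha$.

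The main obstacle is the treatment of the trilinear and nonconformity contributions. Because $V_h\not\subset H^2_0(\O)$, every comparison with the continuous problem must be routed through the enrichment operator $E_h$ and the elementwise projection $\Pi^h$; and, unlike the stream-function Navier--Stokes form of \cite{Adak_MorleyVEMNSE}, the von K\'arm\'an trilinear form $B$ is fully symmetric yet lacks the antisymmetry/vanishing property, so each permutation of its arguments must be bounded separately through the sharp variants of Lemma~\ref{lem:boundednessBh}, exploiting the surplus regularity $\Psi\in\bH^{2+\alpha}(\O)$ via the embeddings $H^{2+\alpha}(\O)\hookrightarrow W^{2,4}(\O)$ and $H^{2+\alpha}(\O)\hookrightarrow W^{1,\infty}(\O)$. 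The delicate bookkeeping is to keep every constant multiplying $|e_h|_{2,h}$ proportional to $h_{\max}^\alpha$, so that the final absorption step is legitimate; this is precisely what the smallness condition on $h_{\max}$ is designed to guarantee.
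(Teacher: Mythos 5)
Your first paragraph is exactly the paper's proof: triangle inequality, the interpolation estimate of Lemma~\ref{lem.vI}, and the bound $|\Psi_I^h-\Psi_h|_{2,h}\le R(h_{\max})\lesssim h_{\max}^\alpha$ from Theorem~\ref{thm:existence}. The additional ``self-contained'' derivation merely unfolds Step~1 of the proof of Theorem~\ref{thm:existence} (and still invokes $|e_h|_{2,h}\le R(h_{\max})$ to absorb the quadratic term), so the approach is the same and the argument is correct.
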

\begin{proof}
	The triangle inequality, Lemma~\ref{lem.vI}, \eqref{eqn.seminormbound}, and Theorem~\ref{thm:existence} for sufficiently small $h_{\max}$ with $R(h_{\max}) \lesssim h_{\max}^\alpha$ show
	\begin{align}
	\|\Psi-\Psi_h\|_{2,h}&\le \|\Psi-\Psi_I^h\|_{2,h}+\|\Psi_I^h-\Psi_h\|_{2,h} \lesssim h_{\max}^\alpha.
	\end{align}
\end{proof}
\noindent To prove the lower order error estimates, define the augmented local space \cite{Zhao_Morley}
\begin{align}\label{def.whk}
W_h(K):=\displaystyle\bigg\{&\varphi \in H^2(K);\Delta^2\varphi \in \P_0(K), \varphi_{|e} \in \P_2(e), \Delta \varphi_{|e} \in \P_{0}(e), \int_e \Pi^K\varphi\ds=\int_e \varphi\ds \\
&  \; \forall e \subseteq \partial K,\int_K \Pi^K\varphi\dx=\int_K \varphi\dx\bigg\}.
\end{align}
\noindent The global space $W_h$ can be then assembled in the same way as $V_h$ is defined. 
As in \cite[Section 6]{Zhao_Morley}, reformulate the nonconforming VEM \eqref{VKE_Morleyweak} as
\begin{equation}\label{VKE_Morleyweak_modified}
A_h(\Psi_h,\Phi_h)+B_h(\Psi_h,\Psi_h,\Phi_h)=(f_h,\varphi_{1,h})=: L_h(\Phi_h) \quad \forall\,\Phi_h\in \bW_h
\end{equation}
with the virtual element space $W_h$ instead of $V_h$ and $f_h|_K:=P_0^K f$. The same arguments in Theorem~\ref{thm:energyerror} leads to
\begin{equation}\label{eqn.energyerror.modified}
\|\Psi-\Psi_h\|_{2,h} \lesssim h_{\max}^\alpha.
\end{equation}
Also, for $f \in H^1(\O)$ and $\varphi_h \in W_h$, the following approximation property hold \cite[(6.2)]{Zhao_Morley}:
\begin{equation}\label{eqn.ffh}
(f-f_h,\varphi_{I}^{h})\lesssim h_{\max}^2\|f\|_1\|\varphi_I^h\|_{1,h}.
\end{equation}
\begin{thm}[$H^1$ estimate]\label{thm:lowererror}
	Let $\Psi$ be a regular solution to \eqref{VKE_weak} and $f \in H^1(\O)$. For a sufficiently small $h_{\max}$, the discrete solution $\Psi_h$ to \eqref{VKE_Morleyweak_modified} satisfies
	$$\|\Psi-\Psi_h\|_{1,h} \lesssim h_{\max}^{2\alpha},$$
	where $\alpha \in (\half,1]$ is the index of elliptic regularity.
\end{thm}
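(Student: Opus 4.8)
The plan is to run an Aubin--Nitsche duality argument that upgrades the energy estimate $\|\Psi-\Psi_h\|_{2,h}\lesssim h_{\max}^\alpha$ of \eqref{eqn.energyerror.modified} to the weaker broken $H^1$-seminorm, gaining an extra factor $h_{\max}^\alpha$. \emph{First I would reduce} to the conforming enrichment of the discrete error. Writing $\Psi-\Psi_h=(\Psi-\Psi_I^h)-e_h$ with $e_h:=\Psi_h-\Psi_I^h$ in the discrete space, Lemma~\ref{lem.vI} gives $\|\Psi-\Psi_I^h\|_{1,h}\lesssim h_{\max}^{1+\alpha}\|\Psi\|_{2+\alpha}$, which is below the target rate since $1+\alpha\ge 2\alpha$ for $\alpha\le1$. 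By Theorem~\ref{thm:existence} the discrete error obeys $|e_h|_{2,h}\le R(h_{\max})\lesssim h_{\max}^\alpha$, so Lemma~\ref{lem:Eh} yields $|e_h-E_he_h|_{1,h}\lesssim h_{\max}|e_h|_{2,h}\lesssim h_{\max}^{1+\alpha}$. Hence everything reduces to showing $|E_he_h|_{1}\lesssim h_{\max}^{2\alpha}$; crucially, enriching the \emph{small} quantity $e_h$ (rather than $\Psi_h$, whose energy norm is only $O(1)$) is what keeps the enrichment defect higher order.

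\emph{Next I would set up the dual problem} \eqref{eqn.dualcts} with data $Q=-\Delta E_he_h\in\bH^{-1}(\O)$, so that $(Q,E_he_h)=|E_he_h|_1^2$ and $\|Q\|_{-1}\le|E_he_h|_1$; by \eqref{eqn.boundduallinearised} the dual solution $\bxi\in\bv$ satisfies $\|\bxi\|_{2+\alpha}\lesssim|E_he_h|_1$. Since $E_he_h$ and $\bxi$ are conforming,
\[|E_he_h|_1^2=\cA(E_he_h,\bxi)=A(E_he_h,\bxi)+B(\Psi,E_he_h,\bxi)+B(E_he_h,\Psi,\bxi).\]
I would then rewrite $A=A_{\pw}$ and $B=\hB_h$ on these conforming arguments and insert both $e_h$ and the dual interpolant $\bxi_I^h$, splitting the right-hand side into three groups: (i) \emph{enrichment-consistency} terms carrying the factor $E_he_h-e_h$; (ii) \emph{dual-approximation} terms carrying $\bxi-\bxi_I^h$; and (iii) the fully discrete combination $A_{\pw}(e_h,\bxi_I^h)+\hB_h(\Psi,e_h,\bxi_I^h)+\hB_h(e_h,\Psi,\bxi_I^h)$.

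\emph{Group (iii) is where the equations enter.} Using $e_h=\Psi_h-\Psi_I^h$, the consistency \eqref{eqn.consistency} to pass from $A_{\pw}$ to $A_h$ on the $\P_2(\cT_h)$ parts, the discrete equation \eqref{VKE_Morleyweak_modified} tested with $\bxi_I^h$, and the continuous equation \eqref{VKE_weak}, the linearised contributions recombine into $\cA$ and cancel against $|E_he_h|_1^2$; what survives is a data-oscillation term, bounded by \eqref{eqn.ffh} at rate $h_{\max}^2\|f\|_1\|\bxi\|_{2+\alpha}$, together with a quadratic remainder. Here the algebraic identity $B(\Psi,\Psi,\cdot)-B(\Psi_h,\Psi_h,\cdot)=B(\Psi,e,\cdot)+B(e,\Psi,\cdot)-B(e,e,\cdot)$ for $e:=\Psi-\Psi_h$, valid by the symmetry of $B$, isolates the remainder $\hB_h(\Psi-\Psi_h,\Psi-\Psi_h,\bxi)$. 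This is the benign source of the doubled rate: Lemma~\ref{lem:boundednessBh}.b bounds it by $|\Psi-\Pi^h\Psi_h|_{2,h}\,|\Psi-\Pi^h\Psi_h|_{1,4,h}\,\|\bxi\|_{2+\alpha}$, and since $\Pi^h$ averages the Hessian, the energy estimate \eqref{eqn.energyerror.modified} gives $|\Psi-\Pi^h\Psi_h|_{2,h}\lesssim h_{\max}^\alpha$, so the remainder is $\lesssim h_{\max}^{2\alpha}\|\bxi\|_{2+\alpha}$.

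\emph{Finally I would bound groups (i) and (ii).} The $A_{\pw}$ pieces are controlled by Lemma~\ref{lem:Apw}, and the $\hB_h$ pieces by Lemma~\ref{lem:boundednessBh}.c--f, in each case combined with the enrichment and interpolation estimates collected in Remark~\ref{remark.consequence}; the target bound $h_{\max}^{2\alpha}\|\bxi\|_{2+\alpha}$ arises as one factor $h_{\max}^\alpha$ from $|e_h|_{2,h}$ (the primal error) times one factor $h_{\max}^\alpha$ from the $\bxi-\bxi_I^h$ or $E_he_h-e_h$ defect (the dual/nonconformity error). Collecting all three groups gives $|E_he_h|_1^2\lesssim h_{\max}^{2\alpha}\|\bxi\|_{2+\alpha}\lesssim h_{\max}^{2\alpha}|E_he_h|_1$, whence $|E_he_h|_1\lesssim h_{\max}^{2\alpha}$ and the theorem follows. \emph{The main obstacle} will be Group (iii): because the von K\'arm\'an trilinear form $B$ is symmetric rather than skew-symmetric, there is no helpful cancellation in the nonlinear terms, so the telescoping that produces the clean quadratic remainder (and the accompanying consistency corrections) must be organised carefully, with repeated use of the delicate mixed bounds of Lemma~\ref{lem:boundednessBh} while tracking the extra power $h_{\max}^\alpha$ throughout.
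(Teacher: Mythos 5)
Your proposal follows essentially the same route as the paper: reduce to the enriched discrete error $E_h(\Psi_I^h-\Psi_h)$, set up the dual problem \eqref{eqn.dualcts} with $Q=-\Delta E_h\rho_h$, split into enrichment-defect terms, dual-interpolation terms, and the discrete/continuous equation combination, extract the data oscillation via \eqref{eqn.ffh} and the quadratic remainder $\hB_h(\Psi-\Psi_h,\Psi-\Psi_h,\bxi)$, and close with $\|\bxi\|_{2+\alpha}\lesssim\|\nabla E_h\rho_h\|$. The only cosmetic difference is that the paper bounds the quadratic remainder via the symmetry of $\hB_h$ in its last two arguments together with Lemma~\ref{lem:boundednessBh}.f rather than Lemma~\ref{lem:boundednessBh}.b, which does not change the argument.
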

\begin{proof}
The triangle inequality and Lemma~\ref{lem.vI} lead to
\begin{align}\label{eqn.H1tri}
	\|\Psi-\Psi_h\|_{1,h}&\le \|\Psi-\Psi_I^h\|_{1,h}+\|\Psi_I^h-\Psi_h\|_{1,h}\nonumber\\
&\lesssim h_{\max}^{1+\alpha}+\|\rho_h-E_h\rho_h\|_{1,h}+\|E_h\rho_h\|_{1}
\end{align}	
with $\rho_h:=\Psi_I^h-\Psi_h \in \bv_h$ in the last step. The triangle inequality, Lemma~\ref{lem.vI}, and \eqref{eqn.energyerror.modified} show
 \begin{equation}\label{eqn.rhoh}
\|\rho_h\|_{2,h}\le \|\Psi_I^h-\Psi\|_{2,h}+\|\Psi-\Psi_h\|_{2,h}\lesssim h_{\max}^\alpha.
\end{equation}
A combination of \eqref{eqn.H1tri}, Lemma~\ref{lem:Eh}, and \eqref{eqn.rhoh} reads
\begin{align}\label{eqn.H1tri1}
\|\Psi-\Psi_h\|_{1,h}&\lesssim h_{\max}^{1+\alpha}+\|E_h\rho_h\|_{1}.
\end{align}	
The choice $Q=-\Delta E_h\rho_h$ and $\Phi=E_h\rho_h$ in \eqref{eqn.dualcts} and elementary algebra reveal
\begin{align}\label{eqn.H1nabla}
\|\nabla E_h\rho_h\|_{}^2 &=\cA(E_h\rho_h,\bxi)=A(E_h\rho_h,\bxi)+B(\Psi,E_h\rho_h,\bxi)+B(E_h\rho_h,\Psi,\bxi)\nonumber\\
&=A_{\pw}(E_h\rho_h-\rho_h,\bxi)+A_{\pw}(\rho_h,\bxi)+\hB_h(\Psi,E_h\rho_h-\rho_h,\bxi)\nonumber\\
&\quad +\hB_h(E_h\rho_h-\rho_h,\Psi,\bxi)+\hB_h(\Psi,\rho_h,\bxi)+\hB_h(\rho_h,\Psi,\bxi)\nonumber\\
&=A_{\pw}(E_h\rho_h-\rho_h,\bxi)+\hB_h(\Psi,E_h\rho_h-\rho_h,\bxi) +\hB_h(E_h\rho_h-\rho_h,\Psi,\bxi)\nonumber\\
&\quad + A_{\pw}(\Psi_I^h-\Psi,\bxi)+A_{\pw}(\Psi-\Psi_h,\bxi-\bxi_I^h)+A_{\pw}(\Psi,\bxi_I^h-\bxi)\nonumber\\
&\quad +A(\Psi,\bxi)-A_{\pw}(\Psi_h,\bxi_I^h)+\hB_h(\Psi,\rho_h,\bxi) +\hB_h(\rho_h,\Psi,\bxi).
\end{align}
Consider the third last term on the right-hand side of \eqref{eqn.H1nabla}. Since $\Psi_\pi^h, \bxi_\pi^h \in \P_2(\cT_h)$, the consistency property \eqref{eqn.consistency} reads $A_{\pw}(\Psi_\pi^h,\bxi_I^h-\bxi_\pi^h)=A_{h}(\Psi_\pi^h,\bxi_I^h-\bxi_\pi^h)$ and $A_{\pw}(\Psi_h,\bxi_\pi^h)=A_{h}(\Psi_h,\bxi_\pi^h)$. Hence,
 \begin{align}
A_{\pw}(\Psi_h,\bxi_I^h)&=A_{\pw}(\Psi_h-\Psi_\pi^h,\bxi_I^h-\bxi_\pi^h)+A_{\pw}(\Psi_\pi^h,\bxi_I^h-\bxi_\pi^h)+A_{\pw}(\Psi_h,\bxi_\pi^h)\\
&=A_{\pw}(\Psi_h-\Psi_\pi^h,\bxi_I^h-\bxi_\pi^h)+A_{h}(\Psi_\pi^h-\Psi_h,\bxi_I^h-\bxi_\pi^h)+A_{h}(\Psi_h,\bxi_I^h)\\
&=A_{\pw}(\Psi_h-\Psi_\pi^h,\bxi_I^h-\bxi_\pi^h)+A_{h}(\Psi_\pi^h-\Psi_h,\bxi_I^h-\bxi_\pi^h)+L_{h}(\bxi_I^h)-B_h(\Psi_h,\Psi_h,\bxi_I^h)
\end{align}
with \eqref{VKE_Morleyweak_modified} for $\Phi_h=\bxi_I^h$ in the last step. This, \eqref{eqn.H1nabla}, and \eqref{VKE_weak} for $\Phi=\bxi$ result in
\begin{align}\label{eqn.H1nabla.1}
\|\nabla E_h\rho_h\|_{}^2 &=A_{\pw}(E_h\rho_h-\rho_h,\bxi)+\hB_h(\Psi,E_h\rho_h-\rho_h,\bxi) +\hB_h(E_h\rho_h-\rho_h,\Psi,\bxi)\nonumber\\
&\quad + A_{\pw}(\Psi_I^h-\Psi,\bxi)+A_{\pw}(\Psi-\Psi_h,\bxi-\bxi_I^h)+A_{\pw}(\Psi,\bxi_I^h-\bxi) +(F(\bxi)-L_{h}(\bxi_I^h))\nonumber\\
&\quad-A_{\pw}(\Psi_h-\Psi_\pi^h,\bxi_I^h-\bxi_\pi^h)-A_{h}(\Psi_\pi^h-\Psi_h,\bxi_I^h-\bxi_\pi^h) +(B_h(\Psi_h,\Psi_h,\bxi_I^h)\nonumber\\
&\quad-B(\Psi,\Psi,\bxi)+\hB_h(\Psi,\Psi_I^h-\Psi_h,\bxi) +\hB_h(\Psi_I^h-\Psi_h,\Psi,\bxi)):=\sum_{i=1}^{10}T_i.
\end{align}
\noindent Lemma~\ref{lem:Apw}.a and \eqref{eqn.rhoh} provide $T_1 \lesssim h_{\max}^\alpha |\rho_h|_{2,h}\|\bxi\|_{2+\alpha}\lesssim h_{\max}^{2\alpha}\|\bxi\|_{2+\alpha}$. Lemma~\ref{lem:boundednessBh}.e and \eqref{eqn.Eh} imply
\begin{align}
T_2 &\lesssim \|\Psi\|_{2+\alpha}|E_h\rho_h-\Pi^h\rho_h|_{1,h}|\bxi|_2
\lesssim h_{\max}|\rho_h|_{2,h}|\bxi|_2 \lesssim h_{\max}^{1+\alpha}|\bxi|_2
\end{align}
with \eqref{eqn.rhoh} in the last step. Since $\Psi,\bxi \in \bV \cap \bH^{2+\alpha}(\O)$, the Sobolev embedding $H^{2+\alpha}(\O) \hookrightarrow W^{2,4}(\O)$ shows
\[|\nabla \Psi \cdot \nabla \bxi|_{1} \le |\nabla \Psi|_{1,4}|\nabla \bxi|_{1,4}\lesssim \|\Psi\|_{2+\alpha}\|\bxi\|_{2+\alpha}.\]
Hence, $\nabla \Psi \cdot \nabla \bxi \in H^1(\O)$ and $T_3 \lesssim |D^2(E_h\rho_h-\Pi^h\rho_h)|_{-1,h}|\nabla \Psi \cdot \nabla \bxi|_{1,h}$. The definition of the dual norm and integration by parts lead to $|D^2(E_h\rho_h-\Pi^h\rho_h)|_{-1,h}\lesssim |E_h\rho_h-\Pi^h\rho_h|_{1,h}$. The combination of the above estimates, \eqref{eqn.Eh}, and \eqref{eqn.rhoh} implies $T_3 \lesssim h_{\max}^{1+\alpha}\|\bxi\|_{2+\alpha}$. Lemma~\ref{lem:Apw}.b reads $T_4+T_6 \lesssim h_{\rm max}^{2\alpha}\|\bxi\|_{2+\alpha}$. The boundedness of $A_{\pw}(\bullet,\bullet)$, \eqref{eqn.energyerror.modified}, and Lemma~\ref{lem.vI} provide $T_5 \lesssim  h_{\rm max}^{2\alpha}\|\bxi\|_{2+\alpha}$. The Cauchy inequality, Lemma~\ref{lem.vI}, and \eqref{eqn.ffh} show
\begin{align}
T_7&=(f,\bxi_1-\bxi_{1,I}^{h})+(f-f_h,\bxi_{1,I}^{h})\lesssim h_{\max}^{2+\alpha}\|\bxi\|_{2+\alpha}+h_{\max}^2\|\bxi_I^h\|_{1,h} \lesssim h_{\max}^{2}\|\bxi\|_{2+\alpha}.
\end{align}
\noindent Triangle inequality with $\Psi$, \eqref{eqn.energyerror.modified}, and Lemma~\ref{lem.vpi} provide $|\Psi_h-\Psi_\pi^h|_{2,h} \lesssim h_{\max}^{\alpha}$. This, the boundedness of $A_{\pw}(\bullet,\bullet)$, Lemma~\ref{lem:Ahproperties}.a, and \eqref{eqn.vIvpi} imply $T_8+T_9 \lesssim h_{\max}^{2\alpha}\|\bxi\|_{2+\alpha}$. Elementary algebra results in
\begin{align}
T_{10}
&=B_h(\Psi_h,\Psi_h,\bxi_I^h)-B(\Psi,\Psi,\bxi)+\hB_h(\Psi,\Psi_I^h-\Psi,\bxi) +\hB_h(\Psi_I^h-\Psi,\Psi,\bxi)\nonumber\\
&\qquad +\hB_h(\Psi,\Psi-\Psi_h,\bxi) +\hB_h(\Psi-\Psi_h,\Psi,\bxi)\\
&=\hB_h(\Psi,\Psi_I^h-\Psi,\bxi) +\hB_h(\Psi_I^h-\Psi,\Psi,\bxi) +\hB_h(\Psi-\Psi_h,\Psi-\Psi_h,\bxi)\nonumber\\
&\quad +\hB_h(\Psi_h-\Psi,\Psi_h,\bxi_I^h-\bxi)+\hB_h(\Psi,\Psi_h,\bxi_I^h-\bxi).\label{t10}
\end{align}
Lemma~\ref{lem:boundednessBh}.e and \eqref{eqn.xiPixiIh1h}
  reveal
$\hB_h(\Psi,\Psi_I^h-\Psi,\bxi) \lesssim h_{\max}^{1+\alpha}\|\bxi\|_{2}$. Arguments analogous to $T_3$ together with \eqref{eqn.xiPixiIh1h} lead to $\hB_h(\Psi_I^h-\Psi,\Psi,\bxi)\lesssim h_{\max}^{1+\alpha}\|\bxi\|_{2+\alpha}$. The relation $\Psi_\pi^h=\Pi^h\Psi_\pi^h$, Lemma~\ref{lem.vpi}, and Lemma~\ref{lem:discreteSobolev} imply 
\begin{equation}\label{eqn.psipihpsih2h}
|\Psi-\Pi^h\Psi_h |_{1,h} \le |\Psi-\Psi_\pi^h |_{1,h}+|\Pi^h(\Psi_\pi^h-\Psi_h) |_{1,h}\lesssim h_{\max}^{1+\alpha}+|\Pi^h(\Psi_\pi^h-\Psi_h) |_{2,h}\lesssim h_{\max}^\alpha
\end{equation} 
with \eqref{eqn.Pihbound}, Lemma~\ref{lem.vpi}, and \eqref{eqn.energyerror.modified} in the last step. The same arguments provides $|\Psi-\Pi^h\Psi_h |_{2,h} \lesssim h_{\max}^\alpha$. This, \eqref{eqn.psipihpsih2h}, the symmetry of the $\hB_h(\bullet,\bullet,\bullet)$ with respect to the second and third arguments, and Lemma~\ref{lem:boundednessBh}.f show $\hB_h(\Psi-\Psi_h,\Psi-\Psi_h,\bxi) \lesssim h_{\max}^{2\alpha}\|\bxi\|_{2+\alpha}$. 
\noindent Lemma~\ref{lem:boundednessBh}.b, the estimate $|\Psi-\Pi^h\Psi_h |_{2,h} \lesssim h_{\max}^\alpha$ from the previous term, Lemma~\ref{lem:discreteSobolev},\eqref{eqn.Pihbound}, Lemma~\ref{lem:Psih}, and the same arguments in \eqref{eqn.psipihppsiIh14h} for $\bxi$ lead to $\hB_h(\Psi_h-\Psi,\Psi_h,\bxi_I^h-\bxi)\lesssim h_{\max}^{2\alpha+1/2}\|\bxi\|_{2+\alpha}.$ Lemma~\ref{lem:boundednessBh}.d,~\ref{lem:Psih}, and \eqref{eqn.xiPixiIh1h} read  $\hB_h(\Psi,\Psi_h,\bxi_I^h-\bxi) \lesssim h_{\max}^{1+\alpha}\|\bxi\|_{2+\alpha}$. A combination of these estimates in \eqref{t10} leads to $T_{10} \lesssim h_{\max}^{2\alpha}\|\bxi\|_{2+\alpha}.$ The substitution of $T_1,\cdots,T_{10}$ in \eqref{eqn.H1nabla.1} provides
$$\|\nabla E_h\rho_h\|_{}^2\lesssim h_{\max}^{2\alpha}\|\bxi\|_{2+\alpha}.$$
Since $\|\bxi\|_{2+\alpha} \lesssim \|\Delta E_h\rho_h\|_{-1} \lesssim \|\nabla E_h\rho_h\|$ from \eqref{eqn.boundduallinearised} and integration by parts, $\|\nabla E_h\rho_h\|_{}\lesssim h_{\max}^{2\alpha}$. This with \eqref{eqn.H1tri1} concludes the proof.
\end{proof}
\begin{thm}[$L^2$ estimate]\label{thm:lowererrorl2}
	Let $\Psi$ be a regular solution to \eqref{VKE_weak} and $f \in H^1(\O)$. For sufficiently small $h_{\max}$, the discrete solution $\Psi_h$ to \eqref{VKE_Morleyweak_modified} satisfies
	$$\|\Psi-\Psi_h\| \lesssim h_{\max}^{2\alpha},$$
	where $\alpha \in (\half,1]$ is the index of elliptic regularity.
\end{thm}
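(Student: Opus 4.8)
The plan is to run an Aubin--Nitsche duality argument that mirrors the proof of Theorem~\ref{thm:lowererror}, changing only the datum of the dual problem. Write $\rho_h:=\Psi_I^h-\Psi_h\in\bv_h$. A triangle inequality gives
\begin{equation*}
\|\Psi-\Psi_h\|\le\|\Psi-\Psi_I^h\|+\|\rho_h-E_h\rho_h\|+\|E_h\rho_h\|.
\end{equation*}
Lemma~\ref{lem.vI} with $m=0$, $s=2+\alpha$ gives $\|\Psi-\Psi_I^h\|\lesssim h_{\max}^{2+\alpha}\|\Psi\|_{2+\alpha}$, while Lemma~\ref{lem:Eh} with $m=0$ together with $\|\rho_h\|_{2,h}\lesssim h_{\max}^\alpha$ from \eqref{eqn.rhoh} yields $\|\rho_h-E_h\rho_h\|\lesssim h_{\max}^2|\rho_h|_{2,h}\lesssim h_{\max}^{2+\alpha}$. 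Since $2+\alpha>2\alpha$ for $\alpha\in(\half,1]$, both terms are of higher order than the asserted rate, and the estimate reduces to bounding $\|E_h\rho_h\|$.

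For this I would take $Q=E_h\rho_h$, regarded as an element of $\bL^2(\O)\subset\bH^{-1}(\O)$, and $\Phi=E_h\rho_h$ in the dual problem \eqref{eqn.dualcts}, so that
\begin{equation*}
\|E_h\rho_h\|^2=(E_h\rho_h,E_h\rho_h)=\cA(E_h\rho_h,\bxi)=A(E_h\rho_h,\bxi)+B(\Psi,E_h\rho_h,\bxi)+B(E_h\rho_h,\Psi,\bxi).
\end{equation*}
This right-hand side is precisely the quantity expanded in \eqref{eqn.H1nabla}--\eqref{eqn.H1nabla.1}; the only change is that $\bxi$ now solves the dual problem with the new datum $Q=E_h\rho_h$. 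Crucially, the intermediate bounds there prove $T_1+\cdots+T_{10}\lesssim h_{\max}^{2\alpha}\|\bxi\|_{2+\alpha}$ using only that $\bxi\in\bv\cap\bH^{2+\alpha}(\O)$, with $\|\bxi\|_{2+\alpha}$ entering merely as a multiplicative factor and never the specific form of $Q$. Hence the entire chain of estimates applies verbatim, giving $\|E_h\rho_h\|^2\lesssim h_{\max}^{2\alpha}\|\bxi\|_{2+\alpha}$.

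To close, I would invoke the dual a priori bound \eqref{eqn.boundduallinearised}: here $\|\bxi\|_{2+\alpha}\lesssim\|Q\|_{-1}=\|E_h\rho_h\|_{-1}\lesssim\|E_h\rho_h\|$, using the embedding $\bL^2(\O)\hookrightarrow\bH^{-1}(\O)$. Substituting this into the previous display yields $\|E_h\rho_h\|^2\lesssim h_{\max}^{2\alpha}\|E_h\rho_h\|$, hence $\|E_h\rho_h\|\lesssim h_{\max}^{2\alpha}$, and combining with the two higher-order terms completes the proof. The only genuinely new step, and the main obstacle, is to recognize that the whole $T_1,\dots,T_{10}$ decomposition is insensitive to the choice of $Q$, so that the $L^2$ estimate follows by replacing $Q=-\Delta E_h\rho_h$ by $Q=E_h\rho_h$ and using $\|E_h\rho_h\|_{-1}\lesssim\|E_h\rho_h\|$ in place of $\|\Delta E_h\rho_h\|_{-1}\lesssim\|\nabla E_h\rho_h\|$; one must check that this substitution degrades none of the term bounds, which it does not, since each $T_i$ was estimated with $\|\bxi\|_{2+\alpha}$ appearing only linearly. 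Note this gives the same order $h_{\max}^{2\alpha}$ as in $H^1$, reflecting that the consistency error dominates in this fully nonconforming setting.
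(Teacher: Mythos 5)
Your proof is correct, but it takes a genuinely different and considerably more laborious route than the paper. The paper's proof is essentially a two-line corollary of Theorem~\ref{thm:lowererror}: after the same triangle inequality and the same higher-order bounds $\|\Psi-\Psi_I^h\|\lesssim h_{\max}^{2+\alpha}$ and $\|\rho_h-E_h\rho_h\|\lesssim h_{\max}^{2+\alpha}$, it simply observes that $E_h\rho_h\in \bH^1_0(\O)$, so the \Poincare\ inequality gives $\|E_h\rho_h\|\lesssim\|\nabla E_h\rho_h\|$, and the bound $\|\nabla E_h\rho_h\|\lesssim h_{\max}^{2\alpha}$ is already available from the proof of the $H^1$ estimate. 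You instead re-run the entire Aubin--Nitsche machinery with the dual datum $Q=E_h\rho_h$ in place of $Q=-\Delta E_h\rho_h$; your observation that the $T_1,\dots,T_{10}$ decomposition depends on $\bxi$ only through $\|\bxi\|_{2+\alpha}$ (entering linearly) is accurate, and the closing step $\|\bxi\|_{2+\alpha}\lesssim\|E_h\rho_h\|_{-1}\lesssim\|E_h\rho_h\|$ is sound, so the argument goes through. What your approach buys is independence from the $H^1$ theorem and it is the natural template one would try if a rate better than $h_{\max}^{2\alpha}$ in $L^2$ were achievable; but as the paper's subsequent remark explains, for this Morley-type element the $L^2$ rate cannot be improved over the $H^1$ rate, so the extra duality argument yields exactly the same order that the \Poincare\ shortcut delivers for free. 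In short: correct, but you re-derived something the paper obtains by citation.
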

\begin{proof}
	\noindent The triangle inequalities lead to
\begin{align}
\|\Psi-\Psi_h\|&\le \|\Psi-\Psi_I^h\|+\|\Psi_I^h-\Psi_h\|\le  \|\Psi-\Psi_I^h\|+\|\rho_h-E_h\rho_h\|+\|E_h\rho_h\|
\end{align}
with $\rho_h= \Psi_I^h-\Psi_h$ in the last step. Lemma~\ref{lem.vI} shows $\|\Psi-\Psi_I^h\|\lesssim h_{\max}^{2+\alpha}$. Lemma~\ref{lem:Eh} and \eqref{eqn.rhoh} provide $\|\rho_h-E_h\rho_h\| \lesssim h_{\max}^2|\rho_h|_{2,h} \lesssim h_{\max}^{2+\alpha}$. Since $E_h\rho_h \in H^1_0(\O)$, $\|E_h\rho_h\| \lesssim \|\nabla E_h\rho_h\|$. This and Theorem~\ref{thm:lowererror} reveal $\|E_h\rho_h\|\lesssim h_{\max}^{2\alpha}$. A combination of these estimates concludes the proof.
\end{proof}
\begin{rem}[$L^2$ error estimate] It is well-known \cite{HuShi,ng2} that for the Morley nonconforming finite element method for the biharmonic problem and \vket, the $L^2$ error estimate cannot be improved compared to that of $H^1$  error estimate. Since Morley finite element method is a special case of simplified fully nonconforming VEM (see Remark~\ref{rem.morley}), it is expected that using a lower order VEM, the order of convergence in $L^2$ norm cannot be improved than that of the $H^1$ norm. Also, the same result for this VEM for the biharmonic problem  is presented in \cite[Theorem 6.2]{Zhao_Morley}.
\end{rem}
\subsection{Convergence of the Newtons method}\label{sec:Newton}
The discrete solution $\Psi_{h}$ of \eqref{VKE_Morleyweak} is characterized as the fixed point of \eqref{eqn.Th} and so depends on the unknown $\Psi_I^h$. The approximate solution to \eqref{VKE_Morleyweak} is computed with the Newton method, where the iterates $\Psi_{h}^{j}$ solve 
\begin{equation}\label{NewtonIterate}
A_{h}(\Psi_{h}^{j},\Phi_{h})+B_{h}(\Psi_{h}^{j-1},\Psi_{h}^{j},\Phi_{h})+B_{h}(\Psi_{h}^{j},\Psi_{h}^{j-1},\Phi_{h})=B_{h}(\Psi_{h}^{j-1},\Psi_{h}^{j-1},\Phi_{h})+F_{h}(\Phi_{h})
\end{equation} 
for all $\Phi_{h}\in \bv_h$. The Newton method has locally quadratic convergence.
\begin{thm}[Convergence of Newton method]\label{NewtonThm}
	Let $\Psi$ be a regular solution to \eqref{VKE_weak} and let  $\Psi_{h}$ solve \eqref{VKE_Morleyweak}. There exists a positive constant $R$ independent of $h$, such that for any initial guess $\Psi_{h}^0$ with
	$\displaystyle | \Psi_{h}- \Psi_{h}^0|_{2,h}\leq  R$, it follows $|\Psi_{h}- \Psi_{h}^{j}|_{2,h}  \leq R\fl j=0,1,2,\ldots$ and the iterates of the Newton method  in \eqref{NewtonIterate} are well defined and converges quadratically to $\Psi_{h}$.
\end{thm}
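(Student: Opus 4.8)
The plan is to track the iteration error $e^j := \Psi_h - \Psi_h^j$ and show that it contracts quadratically. First I would derive a clean error equation. Testing the Newton iteration \eqref{NewtonIterate} against $\Phi_h \in \bv_h$, subtracting it from the discrete equation \eqref{VKE_Morleyweak} satisfied by $\Psi_h$, and using only the trilinearity of $B_h$ (no symmetry is required), I expand the quadratic term about $\Psi_h^{j-1}$ by writing $\Psi_h = \Psi_h^{j-1} + e^{j-1}$. Since the nonlinearity is exactly quadratic the expansion terminates; substituting the identity $\Psi_h^j - \Psi_h^{j-1} = e^{j-1} - e^j$ then yields the error equation
\[
A_h(e^j,\Phi_h) + B_h(\Psi_h^{j-1},e^j,\Phi_h) + B_h(e^j,\Psi_h^{j-1},\Phi_h) = -B_h(e^{j-1},e^{j-1},\Phi_h) \quad \forall\,\Phi_h \in \bv_h.
\]
The left-hand side is the bilinear form obtained by linearising the discrete problem about $\Psi_h^{j-1}$, and the right-hand side is quadratic in the previous error, which is precisely the structure needed for quadratic convergence.

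Next I would establish a discrete inf-sup condition for this left-hand side form that is uniform in $j$. The perturbed form $\hcA_h$ of \eqref{defn.perturbed}, which linearises about the interpolant $\Psi_I^h$, satisfies a discrete inf-sup condition with constant $\widehat{\beta}/2$ by Lemma~\ref{lem:perturbed}. The difference between the left-hand side form above and $\hcA_h$ equals $B_h(\Psi_h^{j-1}-\Psi_I^h,\Theta_h,\Phi_h) + B_h(\Theta_h,\Psi_h^{j-1}-\Psi_I^h,\Phi_h)$, which by Lemma~\ref{lem:boundednessBh}.a is bounded by a mesh-independent multiple of $|\Psi_h^{j-1}-\Psi_I^h|_{2,h}\,|\Theta_h|_{2,h}\,|\Phi_h|_{2,h}$. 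I then control $|\Psi_h^{j-1}-\Psi_I^h|_{2,h} \le |e^{j-1}|_{2,h} + |\Psi_h-\Psi_I^h|_{2,h} \le R + R(h_{\max})$, using the induction hypothesis $|e^{j-1}|_{2,h}\le R$ and the fixed-point bound $|\Psi_h-\Psi_I^h|_{2,h}\le R(h_{\max})\lesssim h_{\max}^\alpha$ from Theorem~\ref{thm:existence}. For $R$ and $h_{\max}$ small enough this perturbation is at most $\widehat{\beta}/4$, so the linearised form inherits a discrete inf-sup condition with constant $\widehat{\beta}/4$. In particular it is invertible on the finite-dimensional space $\bv_h$, so each Newton iterate $\Psi_h^j$ is well defined.

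Combining these, the uniform inf-sup bound applied to the error equation together with Lemma~\ref{lem:boundednessBh}.a gives
\[
\frac{\widehat{\beta}}{4}\,|e^j|_{2,h} \le \sup_{|\Phi_h|_{2,h}=1}\bigl(-B_h(e^{j-1},e^{j-1},\Phi_h)\bigr) \lesssim |e^{j-1}|_{2,h}^2,
\]
that is $|e^j|_{2,h}\le C_0\,|e^{j-1}|_{2,h}^2$ with $C_0$ independent of $h$ and $j$. Fixing $R$ small enough that $C_0 R \le 1/2$ (and $h_{\max}$ correspondingly small for the inf-sup step) closes the induction: if $|e^{j-1}|_{2,h}\le R$ then $|e^j|_{2,h}\le C_0 R^2 \le R/2 \le R$, so once $|\Psi_h-\Psi_h^0|_{2,h}\le R$ every iterate stays in the ball $|\Psi_h-\Psi_h^j|_{2,h}\le R$. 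The estimate $|e^j|_{2,h}\le C_0|e^{j-1}|_{2,h}^2$ is the claimed quadratic convergence, and since $C_0 R < 1$ it also forces $|e^j|_{2,h}\to 0$.

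The main obstacle is the uniform stability in the second step: because Newton's method relocates the linearisation point $\Psi_h^{j-1}$ at every iteration, its inf-sup constant must be bounded below by a single $j$-independent number throughout the iteration. This forces a coupled choice of smallness parameters, since $R$ must be small enough both to keep $|\Psi_h^{j-1}-\Psi_I^h|_{2,h}$ inside the regime where the perturbation argument of Lemma~\ref{lem:perturbed} survives and to make the contraction factor $C_0 R$ strictly below one, while $h_{\max}$ must be small enough that $R(h_{\max})$ does not by itself exhaust that budget. Making these thresholds mutually compatible, and $h$-independent as the statement demands, is the delicate point; the remaining estimates are routine applications of the boundedness and inf-sup results established earlier.
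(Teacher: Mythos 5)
Your proposal is correct and follows essentially the same route as the paper: the identical error equation $A_h(e^j,\Phi_h)+B_h(\Psi_h^{j-1},e^j,\Phi_h)+B_h(e^j,\Psi_h^{j-1},\Phi_h)=-B_h(e^{j-1},e^{j-1},\Phi_h)$, a discrete inf-sup condition for the form linearised at $\Psi_h^{j-1}$ obtained by perturbing $\hcA_h$ via Lemma~\ref{lem:perturbed} and Lemma~\ref{lem:boundednessBh}.a, and the same induction with $R$ chosen so the contraction factor is at most $1/2$. The only cosmetic difference is that you spell out the perturbation estimate $|\Psi_h^{j-1}-\Psi_I^h|_{2,h}\le R+R(h_{\max})$ explicitly where the paper packages it as the statement that the inf-sup condition holds uniformly for all $\bz_h$ in an $\epsilon$-ball around $\Psi_I^h$.
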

\begin{proof}
The proof follows the lines of \cite[Theorem 6.2]{ng2}. However, for the sake of completeness, we provide a detailed proof.

\noindent  Lemma~\ref{lem:perturbed} shows that there exists a positive constant $\epsilon$ (sufficiently small) independent of $h$ such that for each $\bz_h \in \bv_h$ with $| \bz_h-\Psi_I^h|_{2,h}\leq \epsilon$, the bilinear form
	\begin{equation}\label{NewtonNonsingular}
	A_{h}(\bullet,\bullet)+B_{h}(\bz_h,\bullet,\bullet)+B_{h}(\bullet,\bz_h,\bullet)
	\end{equation}
	satisfies discrete inf-sup condition on $\bv_h \times \bv_h$.
	For sufficiently small $h_{\max}$, Theorem~\ref{thm:existence} with $R(h_{\max})\lesssim h_{\max}^\alpha$ implies $|\Psi_I^h-\Psi_h|_{2,h} \lesssim h_{\max}^\alpha$. Thus $h_{\max}$ can be chosen sufficiently small so that $|\Psi_I^h-\Psi_{h}|_{2,h}\leq \epsilon/2$. Recall ${\widehat{\beta}}$ from \eqref{defn.beta}. Let ${C_{b}}$ be the hidden positive constant in Lemma~\ref{lem:boundednessBh}.a. Set
	\begin{equation*}
	R:=\min\left\{\epsilon/2,{\widehat{\beta}}/8{C_{b}}\right\}.
	\end{equation*}
	Assume that the initial guess $\Psi_{h}^0$ satisfies $|\Psi_{h}-\Psi_{h}^0|_{2,h}\leq  R$. Then,
	\begin{equation*}
	|\Psi_I^h-\Psi_{h}^0|_{2,h}\leq |\Psi_I^h-\Psi_{h}|_{2,h}+|\Psi_{h}-\Psi_{h}^0|_{2,h}\leq \epsilon.
	\end{equation*}
	This implies $|\Psi_{h}-\Psi_{h}^{j-1}|_{2,h}\leq  R$ and $	|\Psi_I^h-\Psi_{h}^{j-1}|_{2,h} \leq \epsilon$ for $j=1$ and suppose for mathematical induction that this holds for some $j\in\bN$. Then $\bz_{h}:=\Psi_{h}^{j-1}$ in \eqref{NewtonNonsingular} leads to an discrete inf-sup condition of $A_{h}(\bullet,\bullet)+B_{h}(\Psi^{j-1}_{h},\bullet,\bullet)+B_{h}(\bullet,\Psi^{j-1}_{h},\bullet)$ and so to an unique solution $\Psi_{h}^j$ to \eqref{NewtonIterate} in step $j$ of the Newton scheme. The discrete inf-sup condition \eqref{NewtonNonsingular} implies the existence of $ \Phi_{h}\in\bv_h$ with $| \Phi_{h}|_{2,h}=1$ and
	\begin{equation*}
	\frac{{\widehat{\beta}}}{4}|\Psi_h-\Psi_h^{j}|_{2,h}\leq A_{h}(\Psi_h-\Psi_h^{j}, \Phi_{h})+B_{h}(\Psi_h^{j-1},\Psi_h-\Psi_{h}^{j}, \Phi_{h})+B_{h}(\Psi_h-\Psi_{h}^{j},\Psi_h^{j-1}, \Phi_{h}).
	\end{equation*}
	The application of \eqref{NewtonIterate}, \eqref{VKE_Morleyweak}, and Lemma~\ref{lem:boundednessBh}.a result in 
	\begin{align}
	&A_{h}(\Psi_h-\Psi_h^{j}, \Phi_{h})+B_{h}(\Psi_h^{j-1},\Psi_h-\Psi_{h}^{j}, \Phi_{h})+B_{h}(\Psi_h-\Psi_{h}^{j},\Psi_h^{j-1}, \Phi_{h})\notag\\
	&=A_{h}(\Psi_h,  \Phi_{h})+B_{h}(\Psi_h^{j-1},\Psi_h,  \Phi_{h})+B_{h}(\Psi_h, \Psi_h^{j-1}, \Phi_{h})-B_{h}(\Psi_h^{j-1},\Psi_h^{j-1},  \Phi_{h})-F_{h}(  \Phi_{h})\notag\\
	&=-B_{h}(\Psi_h,\Psi_h,\Phi_{\dg})+B_{h}(\Psi_h^{j-1},\Psi_h,  \Phi_{h})+B_{h}(\Psi_h, \Psi_h^{j-1}, \Phi_{h})-B_{h}(\Psi_h^{j-1},\Psi_h^{j-1},  \Phi_{h})\notag\\
	&=B_{h}(\Psi_h-\Psi_h^{j-1},\Psi_h^{j-1}-\Psi_h,  \Phi_{h})\leq {C_{b}}| \Psi_h-\Psi_h^{j-1}|_{2,h}^2.\notag
	\end{align}
	This implies 
	\begin{equation}\label{induction0}
	|\Psi_h-\Psi_h^{j}|_{2,h}\leq \left(4{C_{b}}/{\widehat{\beta}}\right)|\Psi_h-\Psi_h^{j-1}|_{2,h}^2
	\end{equation}
	and establishes the quadratic convergence of the Newton method  to $\Psi_h$. The definition of $R$, $|\Psi_{h}-\Psi_{h}^{j-1}|_{2,h}\leq  R$, and \eqref{induction0} guarantee $|\Psi_h-\Psi_h^{j}|_{2,h}\leq \half |\Psi_h-\Psi_h^{j-1}|_{2,h}<R$ to allow an induction step $j\to j+1$ to conclude the proof.
\end{proof}
\noindent Hence, the discrete non-linear problem can be solved using the Newton’s method by choosing an appropriate initial guess such that there exists a closed sphere in which the approximate solution is unique and the Newton’s iterates converge quadratically to the discrete solution.

\section{Numerical Results}\label{sec.numericalresults}

This section presents a few examples on general polygonal meshes to illustrate the theoretical estimates in the previous section. 

\smallskip

\noindent The solution $\Psi_h=(u_h,v_h)$ to \eqref{VKE_Morleyweak} is computed using Newtons method where the initial value for $\Psi_h$ in the iterative scheme is the discrete solution to the corresponding biharmonic problem without the trilinear term. The convergence of Newtons' method for the VEM scheme is proved in Theorem~\ref{NewtonThm}. The implementation associated with the trilinear term was done following the ideas in \cite[Section 5.1]{ng1} taking into account of the VEM approximation \cite{Veiga_hitchhikersVEM}. The numerical results are presented for square domain and L-shaped domain in Subsections~\ref{sec.square} and~\ref{sec.Lshaped}.

\smallskip

\noindent Let the errors in $L^2(\O)$, $H^1(\O)$, and $H^2(\O)$ norms be denoted by
\begin{align*}
\err(u):=\|u-\Pi^hu_h\|_{0,h},\, \err(\nabla u):=|u-\Pi^hu_h|_{1,h},\, \mbox{ and } \err(Hu)=|u-\Pi^hu_h|_{2,h},
\end{align*}
where $\Pi^h$ is the elliptic projection operator in \eqref{eqn.Pih}. The model problem is constructed in such a way that the exact solution is known.
		\begin{figure}[h!!]
	\begin{center}
		\begin{minipage}[b]{0.45\linewidth}
			{\includegraphics[width=8cm]{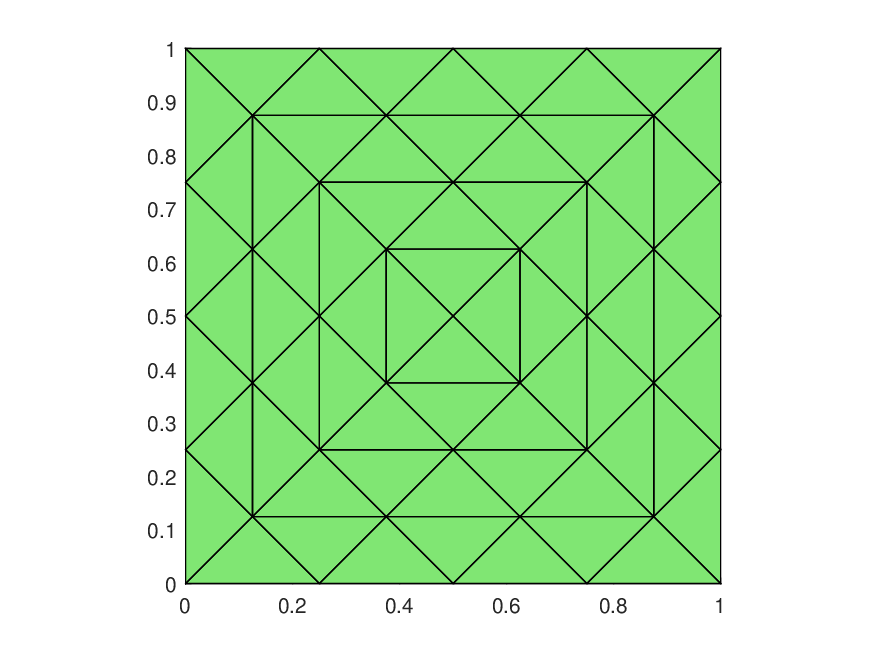}}
			\caption{Triangular Mesh}\label{fig.Triangle}
		\end{minipage}
		\begin{minipage}[b]{0.45\linewidth}
		{\includegraphics[width=8cm]{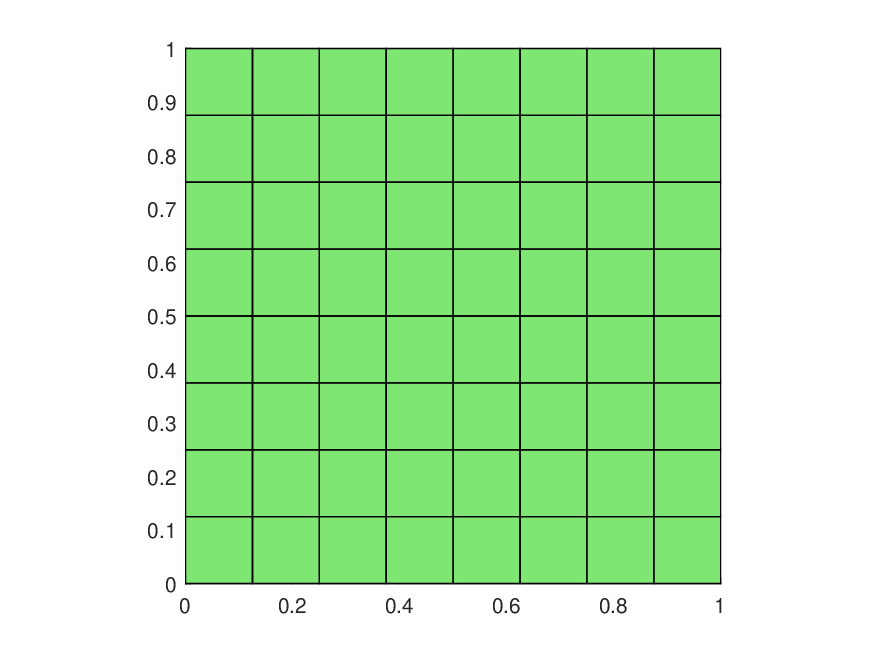}}
		\caption{Square Mesh}\label{fig.Square}
	\end{minipage}
		\begin{minipage}[b]{0.45\linewidth}
			{\includegraphics[width=8cm]{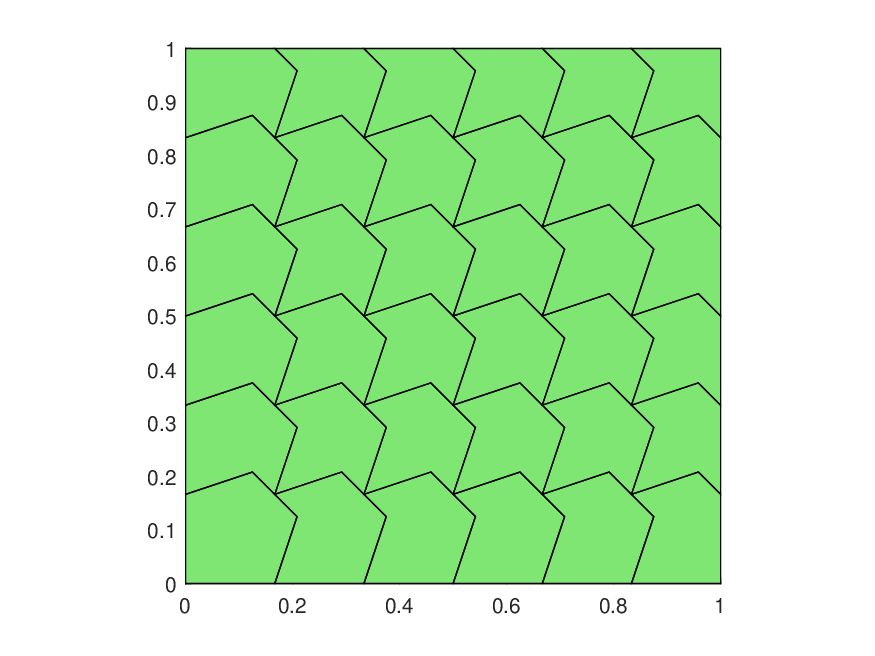}}
				\caption{Concave Mesh}\label{fig.concave}
		\end{minipage}
		\begin{minipage}[b]{0.45\linewidth}
			{\includegraphics[width=8cm]{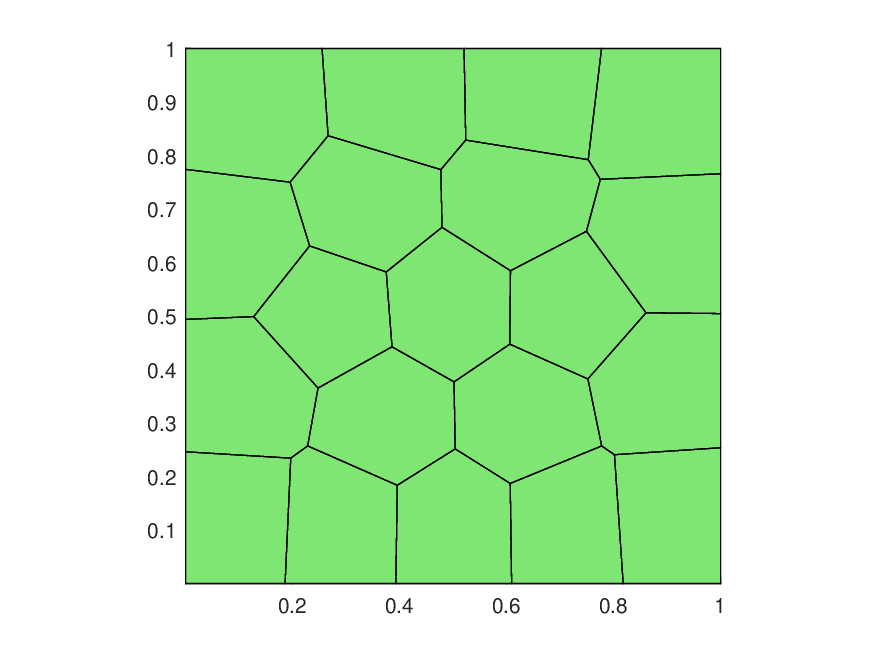}}
				\caption{Structured Voronoi Mesh}\label{fig.SV}
		\end{minipage}
	\begin{minipage}[b]{0.45\linewidth}
	{\includegraphics[width=8cm]{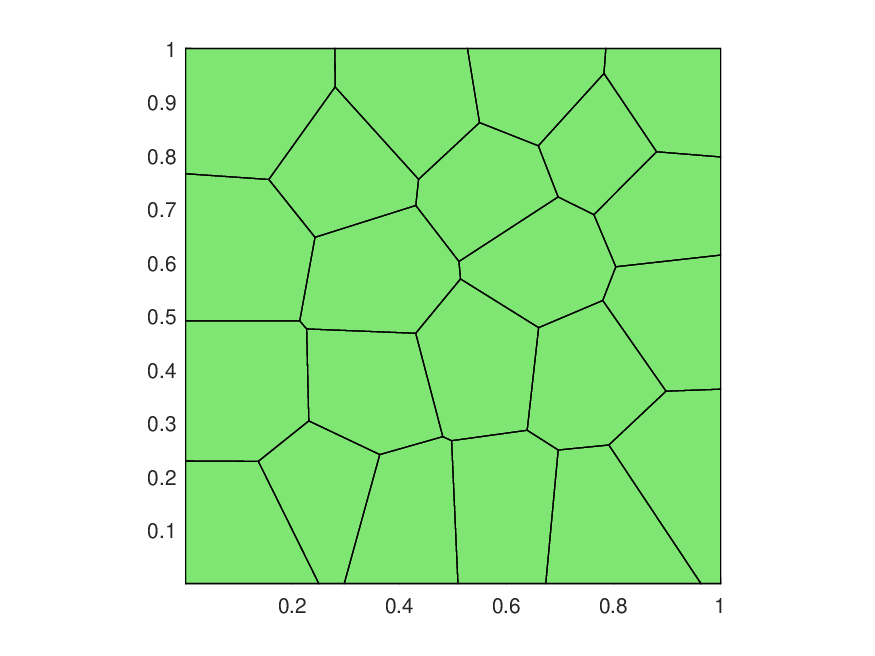}}
		\caption{Random Voronoi Mesh}\label{fig.RV}
\end{minipage}
	\begin{minipage}[b]{0.45\linewidth}
	{\includegraphics[width=8cm]{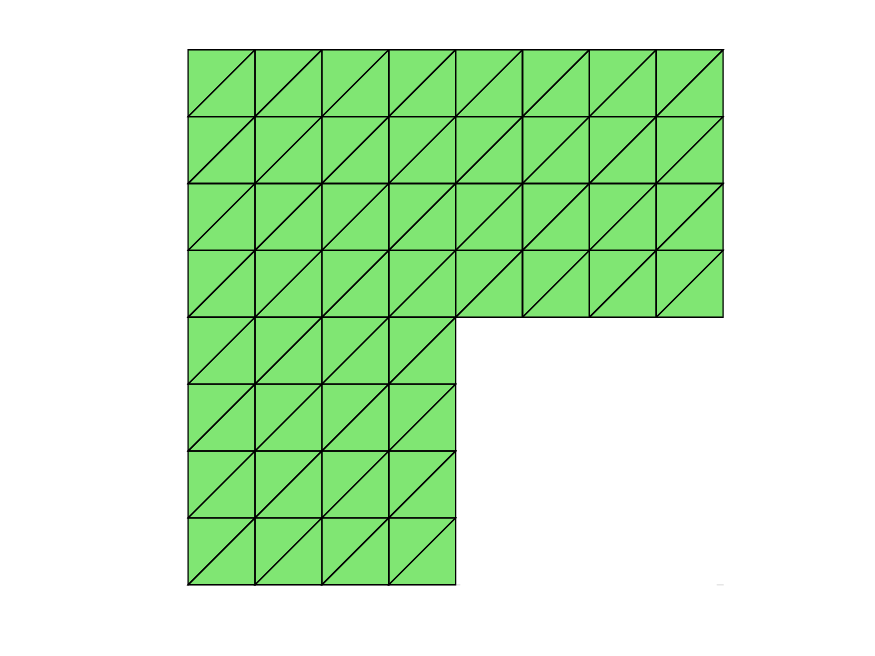}}
	\caption{Triangular Mesh}\label{fig.Lshaped}
\end{minipage}
	\end{center}
\end{figure}

\subsection{Example on the square domain}\label{sec.square}
\noindent Let the computational domain be $\Omega=(0,1)^2$ and the exact solution be given by $u=x^2y^2(1-x)^2(1-y)^2$ and $v=\sin^2(\pi x)\sin^2(\pi y)$.  Then the right hand side load functions are computed as $f=\Delta^2u-[u,v]$ and $g=\Delta^2v+\frac{1}{2}[u,u]$. A series of triangular, square, concave, structured Voronoi, and random Voronoi meshes (see Figure~\ref{fig.Triangle}-\ref{fig.RV}) are employed to test the convergence results for the VEM. We observe in this example that the Newtons’ method converges in three iterations with a tolerance level of $10^{-8}$. 

\smallskip

\noindent Table~\ref{table.eg2.Triangle}-\ref{table.eg2.RV} show  errors and orders of convergence for the displacement $u$ and the Airy-stress function $v$ for the aforementioned five types of meshes. Observe that linear order of convergences are obtained for $u$ and $v$ in the energy norm, and quadratic order of convergence in $L^2$ and $H^1$ norms, see Tables~\ref{table.eg2.Triangle}-\ref{table.eg2.SV}. These numerical order of convergence clearly matches the expected order of convergence given in \eqref{eqn.energyerror.modified}, Theorems~\ref{thm:lowererror}, and~\ref{thm:lowererrorl2}.
\begin{table}[h!!]
	\caption{\small{Convergence results, Square domain, Triangular mesh}}
	{\small{\footnotesize					\begin{center}
				\begin{tabular}{|c ||c|c||c | c ||c|c|}
					\hline
					$h$&$\err(u)$ & Order  & $\err(\nabla u)$ & Order  &$\err(Hu)$ & Order  \\ 
					\hline
					0.500000&0.003848&-&0.010163&-&0.087728&-\\
					0.250000&0.000919&2.0659&0.002565&1.9864&0.040578&1.1123\\
					0.125000&0.000248&1.8892&0.000730&1.8124&0.020991&0.9509\\
					0.062500&0.000064&1.9633&0.000191&1.9382&0.010621&0.9829\\
					0.031250&0.000016&1.9900&0.000048&1.9831&0.005328&0.9954			
					\\	\hline				
				\end{tabular}
				\begin{tabular}{ |c||c |c||c| c|| c|c|}
					\hline
					$h$&$\err(v)$ & Order&$\err(\nabla v)$ & Order  &$\err(Hv)$ & Order  \\ 
					\hline
					0.500000&0.767727&-&2.122015&-&19.371564&-\\
					0.250000&0.177680&2.1113&0.567581&1.9025&9.503684&1.0274\\
					0.125000&0.048263&1.8803&0.161082&1.8170&5.054876&0.9108\\
					0.062500&0.012392&1.9615&0.041987&1.9398&2.575889&0.9726\\
					0.031250&0.003121&1.9895&0.010620&1.9832&1.294492& 0.9927 \\							
					\hline				
				\end{tabular}
		\end{center}}					
	}\label{table.eg2.Triangle}	
\end{table}
\begin{table}[h!!]
	\caption{\small{Convergence results, Square domain, Square mesh}}
	{\small{\footnotesize					\begin{center}
				\begin{tabular}{|c ||c
						|c||c | c ||c|c|}
					\hline
					$h$&$\err(u)$ & Order  & $\err(\nabla u)$ & Order  &$\err(Hu)$ & Order  \\ 
					\hline
					
					0.176777&0.002259&-&0.004788&-&0.037658&-\\	
					0.088388&0.000728&1.6342&0.001567&1.6117&0.014181&1.4089\\
					0.044194&0.000198&1.8802&0.000439&1.8353&0.005473&1.3736\\
					0.022097&0.000051&1.9632&0.000115&1.9388&0.002382&1.1200	\\
					0.011049&0.000013&1.9899&0.000029&1.9818&0.001134&1.0706
					\\	\hline				
				\end{tabular}
				\begin{tabular}{ |c||c |c||c| c|| c|c|}
					\hline
					$h$&$\err(v)$ & Order&$\err(\nabla v)$ & Order  &$\err(Hv)$ & Order  \\ 
					\hline
			    	0.176777&0.340448&-&0.647508&-&7.335924&-\\
					0.088388&0.109373&1.6382&0.194961&1.7317&2.871195&1.3533\\
					0.044194&0.029214&1.9045&0.050934&1.9365&1.130682&1.3445\\
					0.022097&0.007421&1.9769&0.012843&1.9876&0.507089&1.1569 \\			
				    0.011049&0.001862&1.9944&0.003216&1.9975&0.245155&1.0485 \\	\hline				
				\end{tabular}
		\end{center}}					
	}\label{table.eg2.Square}	
\end{table}
\begin{table}[h!!]
	\caption{\small{Convergence results, Square domain, Concave mesh}}
	{\small{\footnotesize					\begin{center}
				\begin{tabular}{|c ||c|c||c | c ||c|c|}
					\hline
					$h$&$\err(u)$ & Order  & $\err(\nabla u)$ & Order  &$\err(Hu)$ & Order  \\ 
					\hline
0.242956&0.006516&-&0.011816&-&0.043874&-\\
0.121478&0.001881&1.7929&0.003695&1.6772&0.020415&1.1037\\
0.060739&0.000520&1.8534&0.001120&1.7215&0.008999&1.1818\\
0.030370&0.000138&1.9118&0.000319&1.8141&0.003882&1.2130\\
0.015185&0.000036&1.9570&0.000085&1.9089&0.001727&1.1686	
					\\	\hline				
				\end{tabular}
				\begin{tabular}{ |c||c |c||c| c|| c|c|}
					\hline
					$h$&$\err(v)$ & Order&$\err(\nabla v)$ & Order  &$\err(Hv)$ & Order  \\ 
					\hline
0.242956&0.985669&-&2.021104&-&7.247490&-\\
0.121478&0.313766&1.6514&0.644574&1.6487&3.459449&1.0669\\
0.060739&0.086675&1.8560&0.179839&1.8416&1.548383&1.1598\\
0.030370&0.022635&1.9370&0.047895&1.9088&0.721287&1.1021\\		
0.015185&0.005770&1.9720&0.012390&1.9507&0.346804&1.0565 \\					
					\hline				
				\end{tabular}
		\end{center}}					
	}\label{table.eg2.concave}	
\end{table}
\begin{table}[h!!]
	\caption{\small{Convergence results, Square domain, Structured Voronoi Mesh}}
	{\small{\footnotesize					\begin{center}
				\begin{tabular}{|c ||c
						|c||c | c ||c|c|}
					\hline
					$h$&$\err(u)$ & Order  & $\err(\nabla u)$ & Order  &$\err(Hu)$ & Order  \\ 
					\hline
0.340697&0.005826&-&0.011623&-&0.071067&-\\
0.171923&0.002225&1.4075&0.004481&1.3935&0.030928&1.2164\\
0.083555&0.000650&1.7061&0.001476&1.5390&0.013413&1.1578\\
0.047445&0.000213&1.9673&0.000537&1.7855&0.006463&1.2900	\\
0.027786&0.000074&1.9693&0.000195&1.8969&0.003328&1.2404\\
						\hline				
				\end{tabular}
				\begin{tabular}{ |c||c |c||c| c|| c|c|}
					\hline
					$h$&$\err(v)$ & Order&$\err(\nabla v)$ & Order  &$\err(Hv)$ & Order  \\ 
					\hline
0.340697&0.906489&-&2.598308&-&10.88671&-\\
0.171923&0.370324&1.3089&0.935058&1.4943&4.857832&1.1798\\
0.083555&0.110610&1.6747&0.267744&1.7332&2.300755&1.0358\\
0.047445&0.035905&1.9881&0.091163&1.9037&1.187114& 1.1692\\			
0.027786&0.012727&1.9384&0.033555&1.8680&0.654228&1.1136\\	
					\hline				
				\end{tabular}
		\end{center}}					
	}\label{table.eg2.SV}	
\end{table}
\begin{table}[h!!]
	\caption{\small{Convergence results, Square domain, Random Voronoi Mesh}}
	{\small{\footnotesize					\begin{center}
				\begin{tabular}{|c ||c
						|c||c | c ||c|c|}
					\hline
					$h$&$\err(u)$ & Order  & $\err(\nabla u)$ & Order  &$\err(Hu)$ & Order  \\ 
					\hline
	0.373676&0.006044&-&0.012864&-&0.072629&-\\
	0.174941&0.002051&1.4240&0.004305&1.4422&0.028624&1.2269\\
	0.089478&0.000502&2.1000&0.001204&1.9008&0.011996&1.2972\\
	0.041643&0.000109&1.9973&0.000285&1.8855&0.004432&1.3017	\\	
	0.020068&0.000035&1.5357&0.000095&1.4965&0.002194&0.9631\\
	\hline				
				\end{tabular}
				\begin{tabular}{ |c||c |c||c| c|| c|c|}
					\hline
					$h$&$\err(v)$ & Order&$\err(\nabla v)$ & Order  &$\err(Hv)$ & Order  \\ 
					\hline
0.373676&0.871569&-&2.513214&--&10.518042&-\\
0.174941&0.350708&1.1995&0.912492&1.3349&4.654239&1.0743\\
0.089478&0.086986&2.0795&0.223585&2.0977&2.118548&1.1739\\
0.041643&0.018494&2.0243&0.049397&1.9741&0.856600& 1.1839\\	
0.020068&0.006145&1.5093&0.016285&1.5201&0.447808&0.8885 \\				
					\hline				
				\end{tabular}
		\end{center}}					
	}\label{table.eg2.RV}	
\end{table}

\subsection{Example on the L-shaped domain}\label{sec.Lshaped}
\noindent Consider the L-shaped domain $\Omega=(-1,1)^2 \setminus\big{(}[0,1)\times(-1,0]\big{)}$. Choose the right hand functions such that the exact singular solution \cite{Grisvard} in polar coordinates is given by
\begin{align*}
u=v=(r^2 \cos^2\theta-1)^2 (r^2 \sin^2\theta-1)^2 r^{1+ \alpha}g_{\alpha,\omega}(\theta),
\end{align*}
where $ \alpha\approx 0.5444837367$ is a non-characteristic 
root of $\sin^2( \alpha\omega) =  \alpha^2\sin^2(\omega)$, $\omega=\frac{3\pi}{2}$,  and
$g_{\alpha,\omega}(\theta)=(\frac{1}{\alpha-1}\sin ((\alpha-1)\omega)-\frac{1}{ \alpha+1}\sin(( \alpha+1)\omega))(\cos(( \alpha-1)\theta)-\cos(( \alpha+1)\theta))$ 
$-(\frac{1}{\alpha-1}\sin(( \alpha-1)\theta)-\frac{1}{ \alpha+1}\sin(( \alpha+1)\theta))
(\cos(( \alpha-1)\omega)-\cos(( \alpha+1)\omega)).$ 
The computation of the discrete solution is executed utilizing triangular meshes, as depicted in Figure~\ref{fig.Lshaped}. In this example, the Newtons' method exhibits convergence within four iterations, while maintaining a tolerance threshold of $10^{-8}$. 

\smallskip

\noindent  This example is particularly interesting since the solution is less regular due to the corner singularity. Since $\O$ is non-convex, we expect only sub-optimal order of convergences in the energy, $H^1$ and $L^2$ norms. Table~\ref{table.Lshaped} confirms these estimates numerically. A similar observation for Morley nonconforming FEM is present in \cite[Section 5]{ng2} and \cite[Section 6.2.2]{JDNNDS_ACOM} for a different weak formulation. 
\begin{table}[h!!]
	\caption{\small{Convergence results, L-shaped domain, Triangular mesh}}
	{\small{\footnotesize					\begin{center}
				\begin{tabular}{|c ||c|c||c | c ||c|c|}
					\hline
					$h$&$\err(u)$ & Order  & $\err(\nabla u)$ & Order  &$\err(Hu)$ & Order  \\ 
					\hline
0.707107&1.515408&-&3.748752&-&20.463977&-\\
0.353553&0.474713&1.6746&1.189979&1.6555&11.578325&0.8217\\
0.176777&0.135893&1.8046&0.347020&1.7778&6.166005&0.9090\\
0.088388&0.039031&1.7998&0.101701&1.7707&3.209604&0.9419\\
0.044194&0.011978&1.7042&0.033175&1.6162&1.682817&0.9315\\
0.022097&0.004070&1.5574&0.012555&1.4018&0.905976&0.8933		
					\\	\hline				
				\end{tabular}
				\begin{tabular}{ |c||c |c||c| c|| c|c|}
					\hline
					$h$&$\err(v)$ & Order&$\err(\nabla v)$ & Order  &$\err(Hv)$ & Order  \\ 
					\hline
0.707107&1.035284&-&2.458626&-&15.791769&-\\
0.353553&0.431224&1.2635&1.084192&1.1812&11.359452&0.4753\\
0.176777&0.124537&1.7919&0.319836&1.7612&6.205400&0.8723\\
0.088388&0.035117&1.8263&0.091430&1.8066&3.236073&0.9393\\
0.044194&0.010510&1.7405&0.029067&1.6533&1.696020&0.9321 \\		
0.022097&0.003484&1.5929&0.010896&1.4156&0.912122&0.8949\\			
					\hline				
				\end{tabular}
		\end{center}}					
	}\label{table.Lshaped}	
\end{table}

	\medskip
 
\noindent {\bf{Acknowledgements.}} The first author thanks Indian Institute of Space Science and Technology (IIST) for the financial support towards the research work. The second author thanks the Department of Science and Technology (DST-SERB), India, for supporting this work through the core research grant CRG/2021/002410.

\bibliographystyle{amsplain}
\bibliography{VEMBib}

\providecommand{\bysame}{\leavevmode\hbox to3em{\hrulefill}\thinspace}
\providecommand{\MR}{\relax\ifhmode\unskip\space\fi MR }
\providecommand{\MRhref}[2]{%
  \href{http://www.ams.org/mathscinet-getitem?mr=#1}{#2}
}
\providecommand{\href}[2]{#2}
\begin{thebibliography}{10}

\bibitem{Adak_MorleyVEMNSE}
D.~Adak, D.~Mora, and A.~Silgado, \emph{The {M}orley-type virtual element
  method for the {N}avier-{S}tokes equations in stream-function form on general
  meshes},  (2022).

\bibitem{Adak_Morleywinddriven}
\bysame, \emph{A {M}orley-type virtual element approximation for a wind-driven
  ocean circulation model on polygonal meshes}, J. Comput. Appl. Math.
  \textbf{425} (2023), Paper No. 115026, 23. \MR{4530826}

\bibitem{Antonietti_CahnHilliard}
P.~F. Antonietti, L.~Beir\~{a}o~da Veiga, S.~Scacchi, and M.~Verani, \emph{A
  {$C^1$} virtual element method for the {C}ahn-{H}illiard equation with
  polygonal meshes}, SIAM J. Numer. Anal. \textbf{54} (2016), no.~1, 34--56.
  \MR{3439765}

\bibitem{Antonietti_ncVEM}
P.~F. Antonietti, G.~Manzini, and M.~Verani, \emph{The fully nonconforming
  virtual element method for biharmonic problems}, Math. Models Methods Appl.
  Sci. \textbf{28} (2018), no.~2, 387--407. \MR{3741104}

\bibitem{Veiga_basicVEM}
L.~Beir\~{a}o~da Veiga, F.~Brezzi, A.~Cangiani, G.~Manzini, L.~D. Marini, and
  A.~Russo, \emph{Basic principles of virtual element methods}, Math. Models
  Methods Appl. Sci. \textbf{23} (2013), no.~1, 199--214. \MR{2997471}

\bibitem{Veiga_hitchhikersVEM}
L.~Beir\~{a}o~da Veiga, F.~Brezzi, L.~D. Marini, and A.~Russo, \emph{The
  hitchhiker's guide to the virtual element method}, Math. Models Methods Appl.
  Sci. \textbf{24} (2014), no.~8, 1541--1573. \MR{3200242}

\bibitem{Berger}
M.~S. Berger, \emph{On von {K\'{a}rm\'{a}n} equations and the buckling of a
  thin elastic plate, {I} the clamped plate}, Comm. Pure Appl. Math.
  \textbf{20} (1967), 687--719.

\bibitem{Fife}
M.~S. Berger and P.~C. Fife, \emph{On von {K\'{a}rm\'{a}n} equations and the
  buckling of a thin elastic plate}, Bull. Amer. Math. Soc. \textbf{72} (1966),
  no.~6, 1006--1011.

\bibitem{BergerFife}
\bysame, \emph{Von {K\'{a}rm\'{a}n} equations and the buckling of a thin
  elastic plate. {II} plate with general edge conditions}, Comm. Pure Appl.
  Math. \textbf{21} (1968), 227--241.

\bibitem{BlumRannacher}
H.~Blum and R.~Rannacher, \emph{On the boundary value problem of the biharmonic
  operator on domains with angular corners}, Math. Methods Appl. Sci.
  \textbf{2} (1980), no.~4, 556--581.

\bibitem{brennernew}
S.~C. Brenner, M.~Neilan, A.~Reiser, and L.~Y. Sung, \emph{A {$C^0$} interior
  penalty method for a von {K\'{a}rm\'{a}n} plate}, Numer. Math. (2016), 1--30.

\bibitem{Brenner}
S.~C. Brenner and L.~R. Scott, \emph{The mathematical theory of finite element
  methods}, 3rd ed., Springer, 2007.

\bibitem{ScbSungZhang}
S.~C. Brenner, L.-Y. Sung, H.~Zhang, and Y.~Zhang, \emph{A {Morley} finite
  element method for the displacement obstacle problem of clamped {Kirchhoff}
  plates}, J. Comput. Appl. Math. \textbf{254} (2013), 31–42.

\bibitem{Brenner_errorVEM}
Susanne~C. Brenner, Qingguang Guan, and Li-Yeng Sung, \emph{Some estimates for
  virtual element methods}, Comput. Methods Appl. Math. \textbf{17} (2017),
  no.~4, 553--574. \MR{3709049}

\bibitem{Brenner_PFH2}
Susanne~C. Brenner, Kening Wang, and Jie Zhao, \emph{Poincar\'{e}-{F}riedrichs
  inequalities for piecewise {$H^2$} functions}, Numer. Funct. Anal. Optim.
  \textbf{25} (2004), no.~5-6, 463--478. \MR{2106270}

\bibitem{brezis}
H.~Brezis, \emph{Functional analysis, {S}obolev spaces and partial differential
  equations}, Universitext, Springer, New York, 2011.

\bibitem{Brezzi}
F.~Brezzi, \emph{Finite element approximations of the von {K\'{a}rm\'{a}n}
  equations}, RAIRO Anal. Num\'{e}r. \textbf{12} (1978), no.~4, 303--312.

\bibitem{Brezzi_VEM_platebending}
Franco Brezzi and L.~Donatella Marini, \emph{Virtual element methods for plate
  bending problems}, Comput. Methods Appl. Mech. Engrg. \textbf{253} (2013),
  455--462. \MR{3002804}

\bibitem{CCGMNN18}
C.~Carstensen, G.~Mallik, and N.~Nataraj, \emph{A priori and a posteriori error
  control of discontinuous {G}alerkin finite element methods for the von
  {K}\'{a}rm\'{a}n equations}, IMA J. Numer. Anal. \textbf{39} (2019),
  167--200.

\bibitem{carstensen2017nonconforming}
C.~Carstensen, G.~Mallik, and N.~Nataraj, \emph{Nonconforming finite element
  discretization for semilinear problems with trilinear nonlinearity}, IMA J.
  Numer. Anal. \textbf{41} (2021), no.~1, 164--205. \MR{4205055}

\bibitem{Chen2020AMF}
H.~Chen, A.~K. Pani, and W.~Qiu, \emph{A mixed finite element scheme for
  biharmonic equation with variable coefficient and von {K}\'arm\'an
  equations}, arXiv:2005.11734 (2020).

\bibitem{LCJH_errorVEM}
Long Chen and Jianguo Huang, \emph{Some error analysis on virtual element
  methods}, Calcolo \textbf{55} (2018), no.~1, Paper No. 5, 23. \MR{3760899}

\bibitem{Ciarlet}
P.~G. Ciarlet, \emph{The finite element method for elliptic problems},
  North-Holland, Amsterdam, 1978.

\bibitem{CiarletPlates}
\bysame, \emph{Mathematical elasticity: Theory of plates}, vol.~II,
  North-Holland, Amsterdam, 1997.

\bibitem{Veiga_HdivHcurlVEM}
L.~Beir\~{a}o da~Veiga, F.~Brezzi, L.~D. Marini, and A.~Russo,
  \emph{{$H(\text{div})$} and {$H(\bold{curl})$}-conforming virtual element
  methods}, Numer. Math. \textbf{133} (2016), no.~2, 303--332. \MR{3489088}

\bibitem{JDNNDS_ACOM}
J\'{e}rome Droniou, Neela Nataraj, and Devika Shylaja, \emph{Hessian
  discretisation method for fourth-order semi-linear elliptic equations:
  applications to the von {K}\'{a}rm\'{a}n and {N}avier-{S}tokes models}, Adv.
  Comput. Math. \textbf{47} (2021), no.~2, Paper No. 20, 28. \MR{4220019}

\bibitem{Grisvard}
P.~Grisvard, \emph{Singularities in boundary value problems}, vol. RMA 22,
  Masson \& Springer-Verlag, 1992.

\bibitem{HuShi}
J.~Hu and Z.~C. Shi, \emph{The best {$L^2$} norm error estimate of lower order
  finite element methods for the fourth order problem}, J. Comput. Math.
  \textbf{30} (2012), no.~5, 449--460.

\bibitem{JHYY_mediusVEM}
Jianguo Huang and Yue Yu, \emph{A medius error analysis for nonconforming
  virtual element methods for {P}oisson and biharmonic equations}, J. Comput.
  Appl. Math. \textbf{386} (2021), Paper No. 113229, 20. \MR{4163091}

\bibitem{Knightly}
G.~H. Knightly, \emph{An existence theorem for the von {K\'{a}rm\'{a}n}
  equations}, Arch. Ration. Mech. Anal. \textbf{27} (1967), no.~3, 233--242.

\bibitem{Carlo_VEMvKE}
Carlo Lovadina, David Mora, and Iv\'{a}n Vel\'{a}squez, \emph{A virtual element
  method for the von {K}\'{a}rm\'{a}n equations}, ESAIM Math. Model. Numer.
  Anal. \textbf{55} (2021), no.~2, 533--560. \MR{4229191}

\bibitem{ng1}
G.~Mallik and N.~Nataraj, \emph{Conforming finite element methods for the von
  {K\'{a}rm\'{a}n} equations}, Adv. Comput. Math. \textbf{42} (2016), no.~5,
  1031--1054.

\bibitem{ng2}
\bysame, \emph{A nonconforming finite element approximation for the von
  {K\'{a}rm\'{a}n} equations}, ESAIM Math. Model. Numer. Anal. \textbf{50}
  (2016), no.~2, 433--454.

\bibitem{ngr}
G.~Mallik, N.~Nataraj, and J.P. Raymond, \emph{Error estimates for the
  numerical approximation of a distributed optimal control problem governed by
  the von k\'{a}rm\'{a}n equations}, ESAIM Math. Model. Numer. Anal.
  \textbf{52} (2018), 1137--1172.

\bibitem{Miyoshi}
T.~Miyoshi, \emph{A mixed finite element method for the solution of the von
  {K\'{a}rm\'{a}n} equations}, Numer. Math. \textbf{26} (1976), no.~3,
  255--269.

\bibitem{Quarteroni}
A.~Quarteroni, \emph{Hybrid finite element methods for the von {K\'{a}rm\'{a}n}
  equations}, Calcolo \textbf{16} (1979), no.~3, 271--288.

\bibitem{Reinhart}
L.~Reinhart, \emph{On the numerical analysis of the von {K\'{a}rm\'{a}n}
  equations: mixed finite element approximation and continuation techniques},
  Numer. Math. \textbf{39} (1982), no.~3, 371--404.

\bibitem{Zhao_ncfem}
Jikun Zhao, Shaochun Chen, and Bei Zhang, \emph{The nonconforming virtual
  element method for plate bending problems}, Math. Models Methods Appl. Sci.
  \textbf{26} (2016), no.~9, 1671--1687. \MR{3529253}

\bibitem{Zhao_Morley}
Jikun Zhao, Bei Zhang, Shaochun Chen, and Shipeng Mao, \emph{The {M}orley-type
  virtual element for plate bending problems}, J. Sci. Comput. \textbf{76}
  (2018), no.~1, 610--629. \MR{3812981}

\end{thebibliography}
\end{document}